\let\oldprintnomenclature\printnomenclature
\renewcommand{\printnomenclature}{\begingroup\let\clearpage\relax\let\cleardoublepage\relax\oldprintnomenclature\endgroup}
\pgfplotsset{compat=1.15}
\newtheorem{theorem}{Theorem}[section]
\newtheorem*{theorem*}{Theorem}
\newtheorem{remark}[theorem]{Remark}
\newtheorem{lemma}[theorem]{Lemma}
\newtheorem{proposition}[theorem]{Proposition}
\theoremstyle{definition}
\newtheorem{definition}[theorem]{Definition}
\def\ga{\gamma} 
\def\de{\delta} 
\def\ep{\varepsilon}
\def\ka{\kappa} 
\def\rh{\rho} 
\def\si{\sigma} 
\def\ph{\varphi} 
\def\ps{\psi}
\def\Th{\Theta} 
\def\Si{\Sigma} 
\def\Ph{\Phi}
\def\P{{\mathcal P}}
\def\H{\mathcal{H}}
\def\R{{\mathbb R}}
\def\Cc{{\mathcal C}}
\def\Ec{{\mathcal E}}
\def\Nc{{\mathcal N}}
\def\N{{\mathbb N}}
\title{Convergence Rates of the Regularized Optimal Transport: \\ Disentangling Suboptimality and Entropy}
\author{Hugo Malamut, Maxime Sylvestre}
\begin{document}

\maketitle

\begin{abstract}
We study the convergence of the \textcolor{black}{entropic optimal} transport plans $\gamma_\ep$ towards the optimal transport plan $\gamma_0$  as well as the cost of the entropy-regularized optimal transport $(c,\gamma_\ep)$ towards $(c,\gamma_0)$ as the regularization parameter $\ep$ vanishes in the setting of finite entropy marginals. We show that under the assumption of infinitesimally twisted cost and compactly supported marginals the distance $W_2(\gamma_\ep,\gamma_0)$ is asymptotically greater than $C\sqrt{\ep}$ and the suboptimality $(c,\gamma_\ep)-(c,\gamma_0)$ is of order $\ep$. In the quadratic cost case the compactness assumption is relaxed into a moment of order $2+\delta$ assumption. Moreover, in the case of \textcolor{black}{existence of} a Lipschitz transport map for the non-regularized problem, the distance $W_2(\gamma_\ep,\gamma_0)$ converges to $0$ at rate $\sqrt{\ep}$. Finally, if in addition the marginals have finite Fisher information, we prove $(c,\gamma_\ep)-(c,\gamma_0) \sim d\ep/2$ and we provide a companion expansion of $H(\gamma_\ep)$. These results are achieved by disentangling the role of the cost and the entropy in the regularized problem.
\end{abstract}

\smallskip
\noindent \textbf{Keywords.} optimal transport, entropic regularization, Schr\"odinger problem, Sinkhorn algorithm

\smallskip
\noindent \textbf{Mathematics Subject Classification.} 49Q22, 94A17, 49K40.

\tableofcontents

\nomenclature[01]{$\Omega,X$}{$\Omega$ is an open subset of $\mathbb{R}^d$ and $X$ a compact subset of $\Omega$}
\nomenclature[02a]{$\P(\Omega)$}{Probability measures on $\Omega$}
\nomenclature[02b]{$\P_{ac}(\Omega)$}{\textcolor{black}{Probability measures on $\Omega$ which are absolutely continuous with respect to the Lebesgue measure}}
\nomenclature[02c]{$\P_2(\Omega)$}{\textcolor{black}{Probability measures on $\Omega$ with finite variance}}
\nomenclature[03]{$\H^k$}{Hausdorff measure of dimension $k$}
\nomenclature[04]{$W_2(\mu,\nu)$}{2-Wasserstein distance between $\mu$ and $\nu$}
\nomenclature[05]{$H(\mu\mid\nu)$}{relative entropy of $\mu$ with respect to $\nu$ (see equation \eqref{eq:entropy&I}). Sometimes $H(\mu) : = H(\mu\mid \H^k)$ for some $k \in \N$}
\nomenclature[06]{$I(\mu)$}{Fisher information of $\mu$, see equation \eqref{eq:entropy&I}}

\nomenclature[07]{$\Vert \Ph \Vert_{op}$}{Operator norm of the affine transformation $\Ph$}
\nomenclature[08]{$\Pi(\mu_0,\mu_1)$}{Transport plans between $\mu_0$ and $\mu_1$}
\nomenclature[09]{$c$}{Cost function, $c\in \Cc^2(\Omega)$. In section 2 and 3, $c(x,y) :=\frac{1}{2}\Vert x-y\Vert^2$ (quadratic cost)}

\nomenclature[11]{$T = \nabla f$}{Brenier's optimal map from $\mu_0$ to $\mu_1$, $f$ is a convex function}
\nomenclature[12]{$\ph,\ps$}{a pair of Kantorovich potentials (see equation \eqref{eq:duality-kantorovich})}
\nomenclature[13]{$E$}{a duality gap function defined by $E: = c - (\ph \oplus \ps)$}
\nomenclature[14]{$\Si$}{a set of optimal pairings $\Si := \{(x,y) / E(x,y) = 0\}$}
\nomenclature[16]{$OT_\ep$}{$\ep$-entropic transport cost from $\mu_0$ to $\mu_1$ (see equation \eqref{eq:EOT})}
\nomenclature[17]{$\gamma_\ep$}{Optimal transport plan for $\ep$-Entropic Optimal Transport ($\ep$EOT)}
\nomenclature[18]{$v^\ep_t,\rh_t^\ep$}{Solutions of $\ep$-entropic Benamou Brenier formula ($\ep$BB)}

\nomenclature[24]{$N_k(\mu)$}{Entropy power function of $\mu$ : if  $H(\mu \mid \H^k) < + \infty$, then $N_k(\mu) = \frac{1}{2\pi e}e^{-\frac{2}{k}H(\mu \mid \H^k)}$}
\nomenclature[28]{$m_k(\mu)$}{ Moment of order $k$ of the probability $\mu$}

\nomenclature[32]{$H_m$}{ Mean entropy $H_m := \frac{1}{2} (H(\mu_0\mid \H^d) + H(\mu_1\mid \H^d))$}
\nomenclature[33]{$C(a,b,c)$}{ A constant that only depends on the terms $a$,$b$ and $c$}

\unmarkedfntext{We would like to thank G. Carlier and P. Pegon for fruitful discussions and helpful advices.}

\printnomenclature

\section{Introduction}
We study the regularized optimal transport problem for a cost $c \in \mathcal{C}^2(\mathbb{R}^d\times \mathbb{R}^d, \mathbb{R})$ 
\begin{equation}
\inf\limits_{\ga \in \Pi(\mu_0,\mu_1)} \int cd\ga + \ep H(\ga \vert \mu_0 \otimes \mu_1),
\end{equation}
where the infimum is taken over all measures $\ga \in \mathcal{P}(\mathbb{R}^d \times \mathbb{R}^d)$ with marginals $\mu_0,\mu_1$. Here $H(.|.)$ is the relative entropy also known as Kullback-Leibler divergence. In this paper we will focus on the case where $\mu_0,\mu_1$ have finite entropy with respect to $\H^d$, the Lebesgue measure, $H(\mu_i\mid \mathcal{H}^d) < \infty$ and finite order two moments. In that case the minimizer $\ga_\ep$ is the same as if $\mu_0\otimes\mu_1$ was replaced by the Lebesgue measure $\mathcal{H}^{2d}$ in the entropy term, and hence we consider
\begin{equation}\label{eq:EOT}
    OT_\ep := \inf_{\ga \in \Pi(\mu_0,\mu_1)} \int c d\ga + \ep H(\ga \vert \mathcal{H}^{2d}). \tag{$\ep$EOT}
\end{equation}
Note that $\ep = 0$ yields the classical optimal transport problem. In this case the minimizer need\textcolor{black}{s} not to be unique and $\ga_0$ will sometimes denote any of them. We are interested in deriving rates for the cost term $\int c d \ga_\ep = (c,\ga_\ep)$ as well as the 2-Wasserstein distance between  $\ga_\ep$ and $\ga_0$, when \textcolor{black}{the latter} is unique.\\

In the last decade this problem has witnessed a rapid increase in interest. It has proved to be an efficient way to approximate OT problems, especially from a computational viewpoint. The celebrated Sinkhorn's algorithm \cite{sinkhorn1964sink} was applied in this framework in the pioneering works  \cite{cuturi2013compsink,ben+2015sink}. The good convergence guarantees \cite{franklin1989presinkh,marino2020disc} cemented the success of EOT and its applications.\\
Clearly EOT is a perturbation of classical OT thus it is natural to study the behaviour of this problem as $\ep$ vanishes. In this direction several aspects deserved to be studied such as the convergence of optimal values, potentials (optimizers of the dual problem) and optimal plans, possibly with quantitative rates. 
In the direction of convergence of optimal values recent contributions have thoroughly treated the issue: in the quadratic case, under regularity assumptions, the link between EOT and the Schrödinger problem \cite{leonard} has allowed to find a second order \cite{erbar2015large} and more recently a third order \cite{conforti2019formula,chizat2020faster} expansion of the value $OT_\ep$ in $\ep$. The second order expansion has been generalized to other cost functions \cite{pal2019difference}, and the first order term has been obtained under very mild assumptions on the cost function and the marginals \cite{carlier2022convergence,nutz2022rate}. Those articles focus on the value \textcolor{black}{of the} problem \eqref{eq:EOT}.\\

Our main objective is to disentangle the role of the cost $\int c d\gamma_\ep$ and the entropy \textcolor{black}{$H(\gamma_\ep\mid \mathcal{H}^{2d})$} in order to derive rate of convegence for both. The cost term is of interest itself because it is a faster converging approximation of $OT_0$. The entropy term also allows to lower bound $W_2(\ga_\ep,\ga_0)$, the Wasserstein distance between the entropic optimal transport plan $\gamma_\ep$ and the optimal transport plan $\gamma_0$. The study of the convergence of the cost term in EOT has also been done recently in \cite{altschuler2022semidisc} for the semidiscrete case which grants an $\ep^2$ rate. \textcolor{black}{The authors} also derive convergence rates for the Kantorovich potentials. In the discrete case the rate of convergence is exponential \cite{cominetti1994dicreterate,weed2018discreterate}. In the continuous case, to our knowledge, no asymptotic rate for the suboptimality $\int c d\gamma_\ep - \int c d \ga_0$ was known but a rate of order $\ep$ was suspected based on simple examples such as Gaussian measures. We tackle the problem of sizing the suboptimality in this article. \\

\subsection{Main results}
The aim of this paper is to provide tight rates of convergence on the Wasserstein distance $W_2(\ga_\ep,\ga_0)$, the cost term $(c,\ga_\ep)$  and the entropy term $H(\gamma_\ep\mid \mathcal{H}^{2d})$.
Let $\mu_0,\mu_1 \in \mathcal{P}_{ac}(\mathbb{R}^d)$ be such that $H(\mu_i\mid \mathcal{H}^d) < \infty$. It is known \cite{carlier2022convergence,nutz2022rate} that under mild assumptions (see Lemma \ref{lem:otep-ot-constant-in-rate}) on $\mu_0,\mu_1$,
\begin{equation}
    OT_\ep = (c,\ga_\ep) + \ep H(\ga_\ep\mid\mathcal{H}^{2d}) \leq OT_0 - \frac{d}{2}\ep\ln(\ep) + O(\ep).
\end{equation}
For the quadratic cost and under the stronger assumption of finite Fisher information for the marginals, we have (see \cite[Claim 4.1]{erbar2015large}, see also \cite{pal2019difference} for other costs and different hypothesis)
\begin{equation}
    OT_\ep = OT_0 - \frac{d}{2} \ep \textcolor{black}{\ln}(2\pi\ep) + \ep H_m + o(\ep).
\end{equation}
where $H_m := \frac{1}{2}[H(\mu_0\mid \mathcal{H}^{d})+ H(\mu_1\mid \mathcal{H}^{d})]$.
Our goal \textcolor{black}{is} to disentangle the role of the cost term $(c,\ga_\ep)$ and of the entropy term $H(\gamma_\ep\mid \mathcal{H}^{2d})$ in those \textcolor{black}{rates} of convergence. \\
For clarity we present here the three main results present in the article. The first gives a second order expansion of the entropy and the cost of the entropic optimal transport. It relies on known expansions of the Benamou-Brenier \eqref{eq:BB} formulation of the entropic optimal transport problem. The proof of this result can be found in Section \ref{sec:exact asymptotics}
\begin{theorem*}[Theorem \ref{thm:expansions}]
    Suppose that the cost is quadratic, that is $c(x,y) = \frac{1}{2}\Vert x-y \Vert^2$. Further assume that $I(\mu_i)< \infty$ and $\text{supp}(\mu_i)$ compact. Then
    \begin{equation}
    H(\gamma_\ep\mid \mathcal{H}^{2d}) = - \frac{d}{2} \textcolor{black}{\ln}(2\pi\ep) + H_m - \frac{d}{2} + o(1)
    \end{equation}
    and
    \begin{equation}
     (c,\ga_\ep) = OT_0 + \frac{d}{2} \ep + o(\ep),
 \end{equation} 
\end{theorem*}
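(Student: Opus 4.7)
The overall strategy is to exploit the identity $OT_\ep = (c,\ga_\ep) + \ep H(\ga_\ep \mid \mathcal{H}^{2d})$ together with the third-order expansion $OT_\ep = OT_0 - \frac{d}{2}\ep\ln(2\pi\ep) + \ep H_m + o(\ep)$ recalled in the introduction. This reduces the problem to establishing just one of the two expansions; the other follows by subtraction and division by $\ep$. I would target the cost expansion $(c,\ga_\ep) = OT_0 + \frac{d}{2}\ep + o(\ep)$, since it is the more geometric quantity and can be attacked through the dynamical formulation ($\ep$BB) already available in the paper.

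The key tool is the Schrödinger bridge representation underlying ($\ep$BB): $\ga_\ep$ is realized as the endpoint joint law of a process $(X_t)_{0\le t\le 1}$ with marginal densities $\rh_t^\ep$ and current velocity $v_t^\ep$. I would then write $(c,\ga_\ep) = \tfrac{1}{2}\E[|X_1-X_0|^2]$ and, using the time-symmetric stochastic representation
$$X_1 - X_0 = \int_0^1 v_t^\ep(X_t)\,dt + \sqrt{\ep}\cdot(\text{symmetric martingale part}),$$
decompose it as a kinetic contribution $\tfrac{1}{2}\int_0^1\!\!\int |v_t^\ep|^2\,d\rh_t^\ep\,dt$, a pure noise contribution $\tfrac{d\ep}{2}$ (the variance of a $d$-dimensional Brownian motion with diffusivity $\ep$ run over unit time), and a cross term that must be shown to be $o(\ep)$. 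The kinetic term is precisely the quantity appearing in the ($\ep$BB) energy up to a Fisher-information correction of order $\ep^2$.

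To conclude, I would pass to the limit $\ep\to 0$ in the kinetic term. Under compactness and $I(\mu_i)<\infty$, standard convergence results for the Schrödinger interpolation give that $(\rh_t^\ep,v_t^\ep)$ converges to the displacement interpolation of classical optimal transport and that $\tfrac{1}{2}\int_0^1\!\!\int |v_t^\ep|^2\,d\rh_t^\ep\,dt \to OT_0$ with an error $o(\ep)$, while the $\ep^2 I(\rh_t^\ep)$ contribution is $O(\ep^2)$. This yields $(c,\ga_\ep) = OT_0 + \tfrac{d}{2}\ep + o(\ep)$. Plugging into $OT_\ep = (c,\ga_\ep) + \ep H(\ga_\ep \mid \mathcal{H}^{2d})$ and dividing by $\ep$ produces the announced entropy expansion.

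The main technical obstacle is controlling the cross term in the decomposition of $\E[|X_1-X_0|^2]$: a crude Cauchy--Schwarz bound yields only $O(\sqrt{\ep})$, far too large to be absorbed in $o(\ep)$. The resolution is to exploit the time-symmetry of the Schrödinger bridge, namely the identity $v_t^\ep = \tfrac{1}{2}(\be_t^\ep + \be_t^{*\ep})$ relating the current velocity to the forward and backward drifts, so that the forward and backward Brownian increments combine into a symmetric martingale part whose covariance with $\int v_t^\ep dt$ vanishes in expectation. Compactness provides the uniform integrability to pass to the limit in this cancellation, while the finite Fisher information hypothesis supplies the regularity of $\nabla \log \rh_t^\ep$ needed to handle the ($\ep$BB) Fisher-information term as a genuine $O(\ep^2)$ correction.
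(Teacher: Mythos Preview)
Your route differs from the paper's, and it carries a real gap in two places.

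\textbf{What the paper does.} There is no stochastic calculus. The paper applies the envelope theorem twice: once to $OT_\ep = \min_\ga (c,\ga) + \ep H(\ga)$ to get $\frac{d}{d\ep}OT_\ep = H(\ga_\ep)$, and once to $\ep\Cc_\ep = \ep H_m + \min_{\rh,v}\bigl[\text{kinetic} + \tfrac{\ep^2}{8}\!\int I(\rh_t)\bigr]$ to get $\frac{d}{d\ep}[\ep\Cc_\ep] = H_m + \tfrac{\ep}{4}\!\int I(\rh_t^\ep)$. Since $OT_\ep = \ep\Cc_\ep - \tfrac{d}{2}\ep\ln(2\pi\ep)$, equating the two derivatives yields the \emph{exact} identity
\[
H(\ga_\ep) = -\tfrac{d}{2}\ln(2\pi\ep) + H_m - \tfrac{d}{2} + \tfrac{\ep}{4}\!\int_0^1 I(\rh_t^\ep)\,dt,
\]
and subtracting from $OT_\ep$ gives the companion exact identity $(c,\ga_\ep) = \text{kinetic} - \tfrac{\ep^2}{8}\!\int I + \tfrac{d}{2}\ep$. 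To finish, the paper writes $\Cc_\ep - \tfrac{1}{2\ep}W_2^2 - H_m$ as a sum of two \emph{nonnegative} terms, $\tfrac{1}{\ep}[\text{kinetic} - \tfrac{1}{2}W_2^2]$ and $\tfrac{\ep}{8}\!\int I$, and observes that the Erbar et al.\ expansion forces this sum to $0$. Hence each term goes to $0$, which is precisely $\text{kinetic} = \tfrac{1}{2}W_2^2 + o(\ep)$ and $\ep\!\int I \to 0$.

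\textbf{Where your argument is incomplete.} First, the SDE decomposition you sketch does not obviously produce $\tfrac{1}{2}\!\int\!\!\int|v_t^\ep|^2$ as the ``kinetic'' piece: expanding $\tfrac{1}{2}\E|X_1-X_0|^2$ from the forward SDE yields $\tfrac{1}{2}\E\bigl|\int_0^1 \be_t^\ep\,dt\bigr|^2$ (with the forward drift $\be_t^\ep$, not $v_t^\ep$), a cross term $\sqrt{\ep}\,\E\bigl[\int\be_t^\ep\,dt\cdot W_1\bigr]$ that has no reason to vanish, and $\tfrac{d\ep}{2}$. Getting from there to the paper's exact identity via time-symmetry is possible in principle but is a genuine calculation you have not supplied; the ``symmetric martingale'' you invoke is not orthogonal to $\int v_t^\ep\,dt$ in any obvious way. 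Second, and more seriously, your claim that $\tfrac{1}{2}\!\int\!\!\int|v_t^\ep|^2 \to OT_0$ with error $o(\ep)$ follows from ``standard convergence results'' is exactly the nontrivial content here. Standard results give only $o(1)$ for this difference; the $o(\ep)$ rate is what the paper extracts from the positivity splitting of $\Cc_\ep$ combined with the second-order expansion of $OT_\ep$. If you grant yourself that expansion (as you do in your first paragraph), the envelope-theorem route gives the answer directly, and the stochastic detour is unnecessary.
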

If we relax the finite Fisher information hypothesis the orders of magnitude still hold. The method, presented in Section \ref{sec:quadratic-cost-rates}, used to prove this result relies on the Minty reparametrization trick (see definition \ref{def:Minty}) which implies a quadratic detachment for the duality gap function $E = c - (\ph \oplus \ps)$, where $\ph,\ps$ are Kantorovich potentials.
\begin{theorem*}[Theorem \ref{thm:rate-cost-quadratic}]
    Suppose that the cost is quadratic, that is $c(x,y) = \frac{1}{2}\Vert x-y \Vert^2$. Further assume that $\mu_i$ have finite moment of order $2+\delta$ then
    \begin{equation}
    (c,\gamma_\ep) = OT_0 + \Th(\ep), \quad  H(\gamma_\ep\mid \mathcal{H}^{2d}) =- \frac{d}{2} \ln(\ep) + O(1), \quad \sqrt{\ep} = O(W_2(\gamma_\ep,\gamma_0)).
    \end{equation}
    In the special case where the Monge map $\nabla f$ associated to the optimal transport plan $\ga_0$ is Lipschitz then
    \begin{equation}
        W_2(\gamma_\ep,\gamma_0) = \Th(\sqrt{\ep}).
    \end{equation}
\end{theorem*}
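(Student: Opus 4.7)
My plan is to use the duality gap $E = c - (\ph \oplus \ps)$ and exploit two clean consequences of $\gamma_\ep$ and $\gamma_0$ sharing marginals: the identity $(c,\gamma_\ep) - OT_0 = \int E\, d\gamma_\ep \geq 0$ (integrating the potentials against $\gamma_\ep$ gives the same value as against $\gamma_0$) and $OT_\ep = (c,\gamma_\ep) + \ep H(\gamma_\ep\mid\mathcal{H}^{2d})$, combined with the known expansion $OT_\ep = OT_0 - \tfrac{d}{2}\ep\ln\ep + O(\ep)$ from Lemma~\ref{lem:otep-ot-constant-in-rate}. The new ingredient is a quadratic detachment for $E$ obtained via the Minty reparametrization.

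First I would establish the quadratic detachment. For the quadratic cost, setting $f(x) = \tfrac{1}{2}\|x\|^2 - \ph(x)$ (convex, with Legendre conjugate $g$) makes $E(x,y) = f(x) + g(y) - \langle x,y\rangle \geq 0$, and $\Sigma = \mathrm{graph}(\partial f)$. The Minty change of variables $z = x+y$ realizes $\Sigma$ as the graph of the $1$-Lipschitz resolvent $(I+\partial f)^{-1}$, so that $E$ is comparable (up to bounded factors) to the squared transverse distance to $\Sigma$, giving
\[
E(x,y) \geq \kappa\, \mathrm{dist}((x,y),\Sigma)^2
\]
on a neighborhood of $\Sigma$; on compact sets disjoint from $\Sigma$, $E$ is bounded below by a positive constant, and the $2+\delta$ moment assumption lets a truncation argument control the contribution of $\gamma_\ep$ outside a large ball.

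Second, these two ingredients split the three rates. The Gibbs form $\gamma_\ep = \exp((\ph_\ep + \ps_\ep - c)/\ep)$ combined with the quadratic detachment (transferred from $E$ to $E_\ep := c - \ph_\ep\oplus\ps_\ep$ via uniform convergence of potentials) shows that $\gamma_\ep$ admits a Gaussian-like transverse profile of width $\sqrt{\ep/\kappa}$ on a tube around $\Sigma$ of volume $\asymp \ep^{d/2}$. Comparison with the uniform law on that tube yields the entropy lower bound $H(\gamma_\ep\mid\mathcal{H}^{2d}) \geq -\tfrac{d}{2}\ln\ep - C$, which paired with the upper bound $\ep H(\gamma_\ep) \leq OT_\ep - OT_0 \leq -\tfrac{d}{2}\ep\ln\ep + O(\ep)$ (using $(c,\gamma_\ep) \geq OT_0$) pins down $H(\gamma_\ep\mid\mathcal{H}^{2d}) = -\tfrac{d}{2}\ln\ep + O(1)$ and hence $(c,\gamma_\ep) - OT_0 = O(\ep)$. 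The matching lower bound follows from
\[
(c,\gamma_\ep) - OT_0 = \int E\, d\gamma_\ep \geq \kappa \int \mathrm{dist}(\cdot,\Sigma)^2\, d\gamma_\ep \gtrsim \ep,
\]
where the last step uses the Gaussian transverse spread forced by $H(\gamma_\ep)$ not being too large. Simultaneously, since $\mathrm{supp}(\gamma_0) \subset \Sigma$, any coupling must move $\gamma_\ep$-mass onto $\Sigma$, so $W_2^2(\gamma_\ep,\gamma_0) \geq \int \mathrm{dist}(\cdot,\Sigma)^2 d\gamma_\ep \gtrsim \ep$.

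Finally, under the Lipschitz hypothesis, $\Sigma = \{(x,T(x))\}$ is a Lipschitz graph over the $x$-axis, and the map $(x,y)\mapsto(x,T(x))$ pushes $\gamma_\ep$ to $\gamma_0$ (it preserves the first marginal $\mu_0$), producing an explicit coupling of cost $\int \|y-T(x)\|^2\, d\gamma_\ep \leq \mathrm{Lip}(T)^2 \int \mathrm{dist}^2(\cdot,\Sigma)\, d\gamma_\ep = O(\ep)$, which gives $W_2(\gamma_\ep,\gamma_0) = O(\sqrt\ep)$. The main technical obstacle is the entropy lower bound: converting the pointwise quadratic detachment of the limit object $E$ into a quantitative Gaussian profile for $\gamma_\ep$ requires uniform-in-$\ep$ control of the EOT potentials $\ph_\ep,\ps_\ep$ near $\Sigma$ and a careful truncation to prevent tails from spoiling either the detachment constant $\kappa$ or the Gaussian reference under only $2+\delta$ moments.
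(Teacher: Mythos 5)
Your skeleton matches the paper's in several places (the identity $(c,\ga_\ep)-OT_0=\int E\,d\ga_\ep$, the upper bound $\int E\,d\ga_\ep+\ep H(\ga_\ep)\leq-\tfrac d2\ep\ln\ep+O(\ep)$ from Lemma \ref{lem:otep-ot-constant-in-rate}, the Minty quadratic detachment, $W_2^2(\ga_\ep,\ga_0)\geq\int d(\cdot,\Si)^2d\ga_\ep$ since $\ga_0$ is supported on $\Si$, and the Lipschitz-graph coupling $(x,y)\mapsto(x,T(x))$ for the upper bound on $W_2$, which is exactly Lemma \ref{lem:minty-wasserstein-upper-bound}). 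But the pivotal step --- the entropy lower bound --- is not established. You derive $H(\ga_\ep)\geq-\tfrac d2\ln\ep-C$ from a claimed ``Gaussian-like transverse profile of width $\sqrt{\ep/\kappa}$'' obtained via the Gibbs form of $\ga_\ep$ and uniform convergence of the entropic potentials $\ph_\ep,\ps_\ep$. This is circular as stated: knowing that the transverse spread of $\ga_\ep$ is of order $\sqrt\ep$ is essentially equivalent to the conclusion $(E,\ga_\ep)=\Th(\ep)$ you are trying to prove, and uniform-in-$\ep$ control of $\ph_\ep,\ps_\ep$ under only a $2+\delta$ moment assumption (non-compact supports) is a substantial unproved input. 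You flag this yourself as ``the main technical obstacle,'' but the theorem stands or falls on it.

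The way out is to prove the entropy lower bound \emph{in terms of $\int E\,d\ga$ rather than in terms of $\ep$}, for an arbitrary absolutely continuous plan $\ga$, using no optimality and no Gibbs structure (Propositions \ref{prop:detachment-entropy} and \ref{prop:mino-ent-quadratic}): disintegrate $\hat\ga=\hat\mu\otimes\hat\ga^u$ in Minty coordinates, note that the (global, not merely local) detachment $E(u,v)\geq\tfrac12\|v-v_u\|^2$ gives $\mathrm{Var}(\hat\ga^u)\leq2\int E\,d\hat\ga^u$, invoke that the Gaussian minimizes entropy at fixed variance, and use concavity of $\ln$; this yields $H(\ga)\geq-\tfrac d2\ln\bigl(\int E\,d\ga\bigr)+H(\hat\mu)+C_d$, with $H(\hat\mu)$ controlled below via $N_d(\hat\mu)\leq\mathrm{Var}(\hat\mu)/d$ and the second moments. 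Plugging this into the $OT_\ep$ upper bound gives the self-consistent inequality $x-\tfrac d2\ln x\leq C$ for $x=(E,\ga_\ep)/\ep$, and since $x\mapsto x-\tfrac d2\ln x$ blows up at $0$ and $+\infty$ this forces $(E,\ga_\ep)=\Th(\ep)$; back-substitution then gives $H(\ga_\ep)=-\tfrac d2\ln\ep+O(1)$, and the same detachment applied to $d(\cdot,\Si)^2$ gives $W_2^2(\ga_\ep,\ga_0)\geq c\ep$. Two further remarks: no truncation is needed in the quadratic case since the Minty detachment is global (the $2+\delta$ moments enter only through the quantization constant in Lemma \ref{lem:otep-ot-constant-in-rate} and through bounding $\si_{\ga_\ep}(X+Y)$); and in your Lipschitz step the constant should be $(1+L)^2$ rather than $\mathrm{Lip}(T)^2$ when passing from $d(\cdot,\Si)$ to $\|y-T(x)\|$, or one can use directly the inequality $\|y-T(x)\|^2\leq2LE(x,y)$ as the paper does.
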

Finally for infinitesimally twisted costs we have a similar result but under the stronger assumption of compactly supported marginals. The concept of local quadratic detachment \textcolor{black}{(Definition \ref{def:loc-quadratic-detachment})} is introduced and is key to the proof of the result which can be found in Section \ref{sec:general-rates}.
\begin{theorem*}[Theorem \ref{thm:general-rates}, Theorem \ref{thm:gen-mino-wass-any-coupling}]
    Suppose that the cost is $\mathcal{C}^2$ and infinitesimally twisted (see definition \ref{def:twist}). Further assume that $\mu_i$ is compactly supported then
    \begin{equation}
    (c,\gamma_\ep) = OT_0 + \Th(\ep), \quad  H(\gamma_\ep\mid \mathcal{H}^{2d}) =- \frac{d}{2} \ln(\ep) + O(1), \quad \sqrt{\ep} = O(W_2(\gamma_\ep,\gamma_0)).
    \end{equation}
\end{theorem*}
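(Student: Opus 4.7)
The strategy is to mirror the proof of Theorem~\ref{thm:rate-cost-quadratic}, with the local quadratic detachment of Definition~\ref{def:loc-quadratic-detachment} replacing the (global) quadratic detachment that Minty's reparametrization provides in the quadratic setting. Under the infinitesimally twisted and $\Cc^2$ hypotheses, and thanks to the compact support of the marginals, the local quadratic detachment takes the following form: there exist $c_0, r_0 > 0$ such that
\[
E(x,y) \;\geq\; c_0 \min\bigl(d((x,y),\Si)^2, \, r_0^2\bigr)
\]
uniformly on $K := \mathrm{supp}(\mu_0)\times\mathrm{supp}(\mu_1)$. The compactness of the marginals is essential for making the constants uniform.

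The easy half of the entropy estimate, $H(\gamma_\ep\mid \mathcal{H}^{2d}) \leq -\tfrac{d}{2}\ln\ep + O(1)$, follows from the identity $OT_\ep = (c,\gamma_\ep) + \ep H(\gamma_\ep\mid \mathcal{H}^{2d})$, the trivial inequality $(c,\gamma_\ep) \geq OT_0$, and the upper bound $OT_\ep \leq OT_0 - \tfrac{d}{2}\ep\ln\ep + O(\ep)$ recalled in Lemma~\ref{lem:otep-ot-constant-in-rate}. Since $\mathrm{supp}(\gamma_0) \subset \Si$, any coupling between $\gamma_\ep$ and $\gamma_0$ yields
\[
W_2^2(\gamma_\ep,\gamma_0) \;\geq\; \int d\bigl((x,y),\Si\bigr)^2\, d\gamma_\ep,
\]
while the local quadratic detachment gives $(c,\gamma_\ep) - OT_0 = \int E\,d\gamma_\ep \geq c_0 \int \min(d^2, r_0^2)\, d\gamma_\ep$. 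Both the Wasserstein and the cost lower bounds thus reduce to the concentration estimate $\int d((x,y),\Si)^2\, d\gamma_\ep \geq c\ep$.

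I would establish this concentration estimate by combining the entropy upper bound with the geometry of $\Si$. Under infinitesimal twist, $\Si$ is a rectifiable $d$-dimensional subset of $\R^{2d}$, so tubes $A_r := \{d(\cdot,\Si) \leq r\}\cap K$ have Lebesgue measure $\lesssim r^d$. If $\int d((x,y),\Si)^2\, d\gamma_\ep$ were smaller than any prescribed multiple of $\ep$, Markov's inequality would concentrate most of the mass of $\gamma_\ep$ in a tube of radius $\sim \sqrt{\ep}$ and hence of volume $\lesssim \ep^{d/2}$; a Jensen-type entropy decomposition, splitting $\gamma_\ep$ between this tube and its complement and using $H(\cdot\mid \mathcal{H}^{2d}) \geq -\ln(\text{Lebesgue volume of the support})$ on each disjoint piece, would then force $H(\gamma_\ep\mid \mathcal{H}^{2d})$ to exceed $-\tfrac{d}{2}\ln\ep$ by an arbitrarily large additive constant---contradicting the entropy upper bound from the previous paragraph.

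The remaining estimates $(c,\gamma_\ep) - OT_0 = O(\ep)$ and $H(\gamma_\ep \mid \mathcal{H}^{2d}) \geq -\tfrac{d}{2}\ln\ep - O(1)$ are then obtained simultaneously via an explicit competitor. In tubular coordinates around $\Si$, I would smooth $\gamma_0$ transversally at scale $\sqrt{\ep}$, producing a plan $\gamma^\star_\ep \in \Pi(\mu_0,\mu_1)$ satisfying $(c,\gamma^\star_\ep) \leq OT_0 + O(\ep)$ (by a second-order Taylor expansion of $c$ on the transverse Gaussian-like blobs) and $H(\gamma^\star_\ep \mid \mathcal{H}^{2d}) \leq -\tfrac{d}{2}\ln\ep + O(1)$ (direct computation on the kernel). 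Optimality of $\gamma_\ep$ for \eqref{eq:EOT}, combined with the cost lower bound just obtained, then pins $(c,\gamma_\ep) - OT_0$ to $\Theta(\ep)$ and $H(\gamma_\ep \mid \mathcal{H}^{2d})$ to $-\tfrac{d}{2}\ln\ep + O(1)$. The main technical obstacle is precisely this competitor construction in the general twisted setting: in the quadratic case Minty's trick provides a global convex structure on $\Si$, while here $\Si$ is only locally a Lipschitz submanifold, so the transverse convolution must be performed in local charts and patched so as to preserve the marginals exactly; the compactness of $K$ is what makes the gluing and the resulting uniform estimates feasible.
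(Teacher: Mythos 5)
The architecture you propose (entropy upper bound from the known rate on $OT_\ep$, plus a lower bound on the transverse spread of $\ga_\ep$ around $\Si$) is the right one, and your reduction of both the cost and the Wasserstein lower bounds to the concentration estimate $\int d((x,y),\Si)^2\,d\ga_\ep \geq c\ep$ is sound. But the argument you give for that concentration estimate --- Markov's inequality plus the support-volume bound $H(\cdot\mid\H^{2d})\geq -\ln(\mathrm{vol})$ on a tube of radius $\sim\sqrt\ep$ --- does not close. Split $\ga_\ep$ between the tube $A_{a\sqrt\ep}$ and its complement and let $q=\ga_\ep(A_{a\sqrt\ep}^c)$; the disjoint-support decomposition gives at best $H(\ga_\ep)\geq -(1-q)\ln(C a^d\ep^{d/2}) - q\ln \H^{2d}(K) -\ln 2$, whose leading term is $-(1-q)\tfrac{d}{2}\ln\ep = -\tfrac{d}{2}\ln\ep + q\,\tfrac{d}{2}\ln\ep$. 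The parasitic term $q\,\tfrac{d}{2}\ln\ep$ is negative of size $q\,|\ln\ep|$, while Markov only yields $q\leq c/a^2$ under the hypothesis $\int d^2\,d\ga_\ep\leq c\ep$; for fixed $a$ and $c$ this term diverges to $-\infty$ as $\ep\to 0$, so no contradiction with $H(\ga_\ep)\leq -\tfrac{d}{2}\ln\ep+O(1)$ is reached. Forcing $q\lesssim 1/|\ln\ep|$ requires $a\gtrsim\sqrt{c|\ln\ep|}$, which turns the gain $-d\ln a$ into a loss of order $\ln|\ln\ep|$. Optimizing over $a$ only proves $\int d^2\,d\ga_\ep\gtrsim \ep/|\ln\ep|^{d}$-type bounds, i.e.\ a statement weaker by a logarithmic factor than the claimed $\sqrt\ep=O(W_2(\ga_\ep,\ga_0))$.

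The paper avoids this loss by never comparing $\ga_\ep$ to a uniform measure on a shrinking tube. Instead, in each chart $\Ph_i$ it disintegrates $\ga_\ep$ over the longitudinal coordinate $u$, bounds the \emph{variance} of each conditional $\tau^u$ by $\tfrac{2}{\kappa}\int E\,d\tau^u$ via the local quadratic detachment, invokes the fact that the Gaussian minimizes entropy at fixed variance to get $H(\tau^u)\geq -\tfrac{d}{2}\ln\bigl(\int E\,d\tau^u\bigr)-C$, and pulls the logarithm out by Jensen (Proposition \ref{prop:lower-bound-ent-local-detachment}); the only mass splitting is at a \emph{fixed} scale, where $E\geq E_0>0$ controls the remainder. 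This yields $H(\ga_\ep)\geq -\tfrac{d}{2}\ln\bigl(\int E\,d\ga_\ep\bigr)+C$ with no $\ep$-dependent loss, and plugging it into $\int E\,d\ga_\ep+\ep H(\ga_\ep)\leq -\tfrac{d}{2}\ep\ln\ep+C\ep$ gives $x-\tfrac{d}{2}\ln x\leq C$ for $x=\int E\,d\ga_\ep/\ep$, hence $x=\Th(1)$: both the upper and the lower bound on the suboptimality come out of this single inequality, so the competitor construction you describe at the end is not needed (the required upper bound on $OT_\ep$ is quoted from Lemma \ref{lem:rate-otep-gen-case}; note that Lemma \ref{lem:otep-ot-constant-in-rate}, which you cite, is the quadratic-cost version). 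For the Wasserstein bound the paper proves the analogous inequality $H(\ga_\ep)\geq -\tfrac{d}{2}\ln W_2^2(\ga_\ep,\ga_0)+C$ directly (Proposition \ref{prop:gen-lower-bound-wass-gap}) by the same variance mechanism applied along the optimal coupling with $\ga_0$, precisely because a pointwise detachment for $d(\cdot,\Si)^2$ is delicate in the twisted setting. You should replace your tube-volume step by this Gaussian-comparison-plus-Jensen argument.
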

\textcolor{black}{Note that in the case of an infinitesimally twisted cost the solution to the optimal transport problem is not necessarily unique. Thus we stress that the lower bound on the Wasserstein distance holds for any solution $\ga_0$ of the optimal transport problem.}

\subsection{Definitions and assumptions}

First, let \textcolor{black}{us} recall the definition of the \emph{relative entropy} $H(\mu|\nu)$ and the \emph{Fisher information} $I(\mu)$ that provide quantitative estimate of the smoothness of a measure: if $\mu,\nu \in \P_{ac}(\R^d)$, we note
\begin{equation}\label{eq:entropy&I}
    H(\mu | \nu) := \left\{ \begin{array}{c}
   \int \ln\left(\frac{d\mu}{d\nu}\right) d\mu \emph{   if $\mu \ll \nu$} \\
    + \infty \emph{   otherwise}
\end{array}  \right. \quad \text{and} \quad  
I(\mu) = \left\{ \begin{array}{c}
   \int \frac{\| \nabla \mu(x) \|^2}{\mu(x)^2} d\mu(x)  \emph{   if $\mu \ll \H^d$} \\
    + \infty \emph{   otherwise}
\end{array}  \right. 
\end{equation}
\textcolor{black}{where} $\frac{d\mu}{d\nu}$ denotes the Radon-Nikodym derivative of $\mu$ with respect to $\nu$ and $\mu(x)$ the density of $\mu$ with respect to the Lebesgue measure $\H^d$ by a slight abuse of notation. \textcolor{black}{Following \cite[Annexe A]{leonard}, we will use the notation $H(\mu|\nu)$ even when the measure $\nu$ is not necessarily a probability measure. Typically, when $\mu\in \P_{2,ac}(\R^d)$, we will use the notation $H(\mu | \H^d) := \int \ln(\mu(x)) d\mu(x)$ for its differential entropy. Indeed, as explained in \cite[Annexe A]{leonard}, as soon as the second moment of $\mu$ is finite, its differential entropy $H(\mu | \H^d)$ is well defined. } Note that a finite Fisher information $I(\mu) < + \infty$ implies a finite differential entropy $H(\mu | \H^d)$ \cite[Chapter 9]{villani2003topics}, that we will often denote simply by $H(\mu)$ .\\
We now introduce the \emph{Kantorovich potentials} which are the solutions to the dual of the classical optimal transport problem
\begin{equation}\label{eq:duality-kantorovich}
    \sup_{\ph,\ps} \int\ph d\mu_0 + \int \ps d \mu_1,
\end{equation}
\textcolor{black}{where} the $\sup$ is taken over all functions $\ph \in L^1(\mu_0), \ps \in L^1(\mu_1)$ such that for all $(x,y) \in \R^d \times \R^d$, we have $\ph(x) + \ps(y) \leq c(x,y)$. It is well known that in our current setting this problem has maximizers which are continuous \cite[Chapter 1]{villani2003topics}, that is the Kantorovich potentials are continuous. Moreover it is also possible to assume that the constraint $\ph(x) + \ps(y) \leq c(x,y)$ is binding, that is that $\ph$ and $\ps$ are \emph{c-transform}\footnote{$\ph(x) = \inf_y c(x,y) - \ps(y)$ and $\ps(y) = \inf_x c(x,y) - \ph(x)$} one of each other. In the following, we will use the name \emph{Kantorovich potentials} to denote a pair of continuous and mutual c-transform solutions to the dual problem \eqref{eq:duality-kantorovich}.\\
For a pair $(\ph,\ps)$ of Kantorovich potentials, we will call \emph{duality gap} the quantity $E(x,y):= c(x,y) - (\ph(x) + \ps(y))$ and we will note \textcolor{black}{$E = c - (\ph \oplus \ps)$}. Remark that in the quadratic case, when $\mu_i \in \P_{ac}(\R^d)$, the Kantorovich potentials are unique up to a constant and so the duality gap function E is unique. \\
In the general case, we will be interested in the gap between $\int c d\ga_\ep$ and $\int c d\ga_0$ which can be restated in terms of any duality gap function $E = c - (\ph \oplus \ps)$ with $\ph,\ps$ Kantorovich potentials. Indeed we have by duality that $\int c d\ga_0 = \int \ph d\mu_0 + \int \ps d\mu_1$ and so 
\begin{equation}
    \int c d\ga_\ep - \int c d\ga_0 = \int c d\ga_\ep - \int (\ph \oplus \ps) d\ga_\ep = \int E d\ga_\ep.
\end{equation}
As explained in the main results, we will explore three different sets of assumptions. Each set of \textcolor{black}{assumptions} corresponds to one of the Theorems presented in the main results.
\begin{itemize}
 
 \item {\bf{(H1)}} The cost is quadratic: $c(x,y) := \frac{1}{2}\|x-y\|^2$. \\
 The \textcolor{black}{marginals} have finite \textcolor{black}{Fisher} information $I(\mu_i) < + \infty$ and compact support
 
 \item  {\bf{(H2)}} The cost is quadratic: $c(x,y) := \frac{1}{2}\|x-y\|^2$. \\
 The marginals have finite differential entropy $H(\mu_i | \H^d) < + \infty$ and finite moments of order $2+\de$ for some $\de > 0$ 
 
 \item {\bf{(H3)}} The cost $c$ belongs to $\Cc^2$ and is infinitesimally twisted (see definition \ref{def:twist}). \\ 
 The marginals $(\mu_i)$ have compact support and finite differential entropy.
\end{itemize}

Since a finite Fisher information implies a finite differential entropy, hypothesis $\bf{(H2)}$ is an important relaxation of $\bf{(H1)}$ on the regularity and concentration of $(\mu_i)$. Similarly, hypothesis $\bf{(H3)}$ is an important relaxation of $\bf{(H1)}$ since the class of cost is broader and the marginals are assumed less regular.
A last remark : while the finite entropy or finite information hypothesis are deeply linked with the nature of the problem and essential to our results, the concentration assumptions such as compact support or finite moment must be seen as technical and the results are likely to hold in any case.

\section{Exact asymptotics from the Schrödinger problem}\label{sec:exact asymptotics}

In this Section, we assume validity of hypothesis $\bf{(H1)}$, that is that the cost is quadratic and that the marginals $\mu_0$ and $\mu_1$ have finite Fisher information and compact support. Under finite Fisher information of the marginals, it is known that the value of \eqref{eq:EOT} has a second order expansion in $\ep$ (see \cite[Claim 4.1]{erbar2015large}, see also \cite{pal2019difference} for other costs and different hypothesis)

\begin{equation}\label{eq:expanOTep}
    OT_\ep = \frac{1}{2} \textcolor{black}{W_2^2(\mu_0,\mu_1)} - \frac{d}{2} \ep \textcolor{black}{\ln}(2\pi\ep) + \ep H_m + o(\ep) \quad (\ep \rightarrow 0).
\end{equation}
In this Section, we will disentangle the roles of $(c,\ga_\ep)$ and $H(\ga_\ep | \mathcal{H}^{2d})$ in this asymptotic formula, and hence give a Taylor expansion of both. We will use the dynamic formulation of the entropic optimal transport.

\subsection{ Schrödinger problem and Benamou-Brenier formulation}\label{sec:schrodinger}

The quadratic EOT problem can be reformulated as a \emph{Schrödinger problem} (see \cite{leonard})

\begin{equation}\label{eq:schrodinger}
    \Cc_\ep = \min_{\ga \in \Pi(\mu_0,\mu_1)} H(\ga | m_\ep)
\end{equation}
where $m_\ep$ \textcolor{black}{is} the measure of density $\frac{1}{(2\pi \ep)^{d/2}}e^{-c/\ep}$ with respect to the Lebesgue measure $\H^{2d}$. Since for $\ga \in \Pi(\mu_0,\mu_1)$, we can compute \textcolor{black}{explicitly} $\ep H(\ga | m_\ep) = (c,\ga) + \ep H(\ga\mid\mathcal{H}^{2d}) + \frac{d}{2} \ep \ln(2\pi\ep)$\footnote{\textcolor{black}{Note that this can be considered as a definition for $H(\ga | m_\ep)$, since $m_\ep$ is not a probability measure}}, we have that the solutions of \eqref{eq:schrodinger} and \eqref{eq:EOT} are the same and
\begin{equation}\label{eq:EOT-schrodinger}
    OT_\ep = \ep \Cc_\ep - \frac{d}{2} \ep \ln(2\pi\ep).
\end{equation}
Moreover there exists an analogous to Benamou-Brenier formula for $\Cc_\ep$ (see \cite{gigli2020benamou}):

\begin{equation}\label{eq:BB}
    \Cc_\ep = H_m + \min_{\rh,v} \frac{1}{\ep} \int_0^1 \int \frac{1}{2} |v_t|^2 d\rh_t(x) dt + \frac{\ep}{8} \int_0^1 \int \frac{\|\nabla \rh_t\|^2}{\rh_t} dx dt \tag{$\ep$BB}
\end{equation}
\textcolor{black}{where} the $\min$ is taken as in the classical Benamou-Brenier formula on paths from $\mu_0$ to $\mu_1$ that in the weak sense solve the continuity equation $\partial_t \rh + div(\rh v) = 0$. We will denote by $v^\ep, \rh^\ep$ the solution. This dynamic formulation and other variations are in fact the core of the Schrödinger problem, that intend to solve for the most probable trajectory of a free process at positive temperature with fixed initial and final marginals $\mu_0$ and $\mu_1$ (see \cite{leonard} for a survey). The first term in \eqref{eq:BB} corresponds to the kinetic energy while the second term corresponds to the diffusion ($\ep$ is the temperature parameter).

\subsection{A precise expansion of $H(\ga_\ep)$ and $(c,\ga_\ep)$}\label{sec:precise expansion}

Minimization problems \eqref{eq:EOT} and \eqref{eq:BB} give the natural idea of using the envelope Theorem to express the $\ep$-derivative of $OT_\ep$ and $\Cc_\ep$. For a minimization problem with a real parameter $\ep$, this Theorem ensures that at any differentiability point $\ep_0$ of the value function and for any minimizer, the derivative of the value function \textcolor{black}{coincides} with the $\ep$-partial derivative of the objective function \textcolor{black}{evaluated at the $\ep$-minimizer} (see \cite{milgrom2002envelope}). This idea has been followed in \cite{carlier2022convergence} for \eqref{eq:EOT} and in \cite{conforti2019formula} for \eqref{eq:BB}. We will use the more general setting of \cite{carlier2022convergence} that only requires compact support for the marginals $\mu_i$ and obtain that $\ep \mapsto OT_\ep$ is $C^\infty$ for $\ep > 0$. From equation \eqref{eq:EOT-schrodinger}, it is direct that in that case the function $\ep\Cc_\ep$ is also smooth, and so we can \textcolor{black}{apply the envelope theorem} both to $OT_\ep = \min_{\ga} \int \frac{1}{2} \Vert x-y \Vert^2d\ga + \ep H(\ga \mid \mathcal{H}^{2d})$ and to  $\ep \Cc_\ep = \ep H_m + \min_{\rh,v} \int_0^1 \int \frac{1}{2} |v_t|^2 d\rh_t(x) dt + \frac{\ep^2}{8} \int_0^1 I(\rh_t) dt$ to get

\begin{equation}\label{eq:ep_derivative}
    \frac{d}{d\ep} OT_\ep = H(\ga_\ep) \quad and \quad \frac{d}{d\ep} [ \ep \Cc_\ep ]= H_m + \frac{\ep}{4} \int_0^1 I(\rh^\ep_t) dt
\end{equation}
\textcolor{black}{where} $H(\ga_\ep)$ stands for $H(\ga_\ep \mid \mathcal{H}^{2d})$.
But from equation \eqref{eq:EOT-schrodinger} $\frac{d}{d\ep} OT_\ep = \frac{d}{d\ep} \left[\ep \Cc_\ep - \frac{d}{2} \ep \ln(2\pi\ep) \right]$, so we have

\begin{equation}\label{eq:HfromI}
    H(\ga_\ep) = -\frac{d}{2}\ln(2\pi\ep) + H_m - \frac{d}{2} + \frac{\ep}{4} \int_0^1 I(\rh^\ep_t) dt.
\end{equation}
This gives a quantitative relation between the entropy $H(\ga_\ep)$ of the static problem \eqref{eq:EOT} and the average \textcolor{black}{Fisher information} $\int_0^1 I(\rh^\ep_t) dt$ of the dynamic problem \eqref{eq:BB}. It allows to state a complete \textcolor{black}{coupled system} between the different terms of \eqref{eq:EOT} and \eqref{eq:BB} as explained in the following Proposition:
\begin{proposition}\label{prop:fisher_expansion1}
For the quadratic cost, suppose that the support of $\mu_i$ are compact, and that $H(\mu_i | \H^d) < + \infty$. Then
\begin{equation}\label{eq:system}
\left\{ \begin{array}{ccc}
   (c,\ga_\ep) &=& \displaystyle \int_0^1  \int \frac{1}{2} |v_t^\ep|^2 d\rh_t(x) dt - \frac{\ep^2}{8} \textcolor{black}{\int_0^1 I(\rh^\ep_t) dt} + \frac{d}{2}\ep \\
    H(\ga_\ep) &=& \displaystyle \frac{\ep}{4} \int_0^1 I(\rh^\ep) dt  -\frac{d}{2}\ln(2\pi\ep) + H_m -\frac{d}{2} 
\end{array}  \right. .
\end{equation}
\end{proposition}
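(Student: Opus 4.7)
The second equation of the system is exactly equation \eqref{eq:HfromI}, which has already been derived in the discussion preceding the proposition via the envelope theorem applied to both formulations \eqref{eq:EOT} and \eqref{eq:BB}. So my plan is only to prove the first equation, which will follow by purely algebraic manipulation of formulas established above.

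The starting point is the defining identity
\begin{equation*}
    OT_\ep = (c,\ga_\ep) + \ep\, H(\ga_\ep),
\end{equation*}
which I rearrange to isolate $(c,\ga_\ep) = OT_\ep - \ep\, H(\ga_\ep)$. Next I express both terms using the dynamic Schrödinger formulation: equation \eqref{eq:EOT-schrodinger} gives $OT_\ep = \ep\,\Cc_\ep - \tfrac{d}{2}\ep\ln(2\pi\ep)$, and injecting the Benamou-Brenier identity \eqref{eq:BB} evaluated at its minimizer $(\rh^\ep, v^\ep)$ yields
\begin{equation*}
OT_\ep = \ep H_m + \int_0^1\!\!\int \tfrac{1}{2}|v_t^\ep|^2 d\rh_t^\ep(x)\,dt + \tfrac{\ep^2}{8}\int_0^1 I(\rh_t^\ep)\,dt - \tfrac{d}{2}\ep\ln(2\pi\ep).
\end{equation*}
For the entropy term I multiply the second equation by $\ep$ to obtain
\begin{equation*}
\ep\, H(\ga_\ep) = -\tfrac{d}{2}\ep\ln(2\pi\ep) + \ep H_m - \tfrac{d}{2}\ep + \tfrac{\ep^2}{4}\int_0^1 I(\rh_t^\ep)\,dt.
\end{equation*}

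Subtracting the two, the terms $\ep H_m$ and $-\tfrac{d}{2}\ep\ln(2\pi\ep)$ cancel, the Fisher information coefficients combine as $\tfrac{1}{8} - \tfrac{1}{4} = -\tfrac{1}{8}$, and the constant $+\tfrac{d}{2}\ep$ appears, giving the claimed expression for $(c,\ga_\ep)$.

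In terms of difficulty, there really is no obstacle at this stage: the substantive work was performed upstream, namely (i) the justification that $\ep \mapsto OT_\ep$ and $\ep \mapsto \ep\Cc_\ep$ are smooth on $\ep > 0$ under compact support, and (ii) the application of the envelope theorem to both static and dynamic formulations to produce the two derivative identities in \eqref{eq:ep_derivative}, whose comparison yielded \eqref{eq:HfromI}. Once those are granted, the proposition is a one-line algebraic rearrangement, so the proof should simply cite \eqref{eq:HfromI} for the second line and carry out the subtraction above for the first line.
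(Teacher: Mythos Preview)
Your proof is correct and follows essentially the same approach as the paper: the second line is identified with \eqref{eq:HfromI}, and the first line is obtained by writing $(c,\ga_\ep)=OT_\ep-\ep H(\ga_\ep)$, substituting $OT_\ep=\ep\Cc_\ep-\tfrac{d}{2}\ep\ln(2\pi\ep)$ together with the optimal value of \eqref{eq:BB}, and subtracting $\ep$ times \eqref{eq:HfromI}. You have simply made the algebraic cancellation explicit where the paper leaves it to the reader.
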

\begin{proof}
The second line of the system \eqref{eq:system} is nothing but equation \eqref{eq:HfromI}. For the first line, equation \eqref{eq:EOT-schrodinger} implies that 
\begin{equation}
   (c,\ga_\ep) = \ep \Cc_\ep - \frac{d}{2}\ep \ln(2\pi\ep) - \ep H(\ga_\ep).
\end{equation}
We can inject the expression of $H(\ga_\ep)$ given by \eqref{eq:HfromI} and the expression of $\Cc_\ep$ with the minimal continuous path $(\rh^\ep,v^\ep)$ of equation \eqref{eq:BB} to obtain the first line of the system \eqref{eq:system}.
\end{proof}
Thanks to the system \eqref{eq:system}, it is enough to study the asymptotics of \textcolor{black}{$\ep \int_0^1 I(\rh^\ep_t) dt$} and $\int_0^1 \int |v_t^\ep|^2 d\rh_t(x) dt$ to get results on $(c,\ga_\ep)$ and $H(\ga_\ep)$. When the Fisher information of the marginal is finite, it turns out that the behaviour of \textcolor{black}{$\int_0^1 I(\rh^\ep_t) dt$} and $\int_0^1 \int |v_t^\ep|^2 d\rh_t(x) dt$ can be described quite precisely:
\begin{proposition}\label{prop:fisher_expansion2}
Suppose that the cost is quadratic. If $\mu_0$ and $\mu_1$ have finite Fisher information \textcolor{black}{and compact supports, then} when $\ep$ tends to 0 :
\begin{equation}\label{eq:convergence_fisher}
\ep \textcolor{black}{\int_0^1 I(\rh^\ep_t) dt} \rightarrow 0 \quad and \quad 
\int_0^1 \int |v_t^\ep|^2 d\rh_t(x) dt = \textcolor{black}{W_2^2(\mu_0,\mu_1)} + o(\ep).
\end{equation}
\end{proposition}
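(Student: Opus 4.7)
The plan is to compare two expansions of $OT_\ep$: the one produced by evaluating \eqref{eq:BB} at its minimizer $(\rh^\ep, v^\ep)$ (coupled with \eqref{eq:EOT-schrodinger}), and the already-cited Erbar--Maas--von Renesse expansion \eqref{eq:expanOTep}, which is available precisely because $I(\mu_i)<\infty$. Set
\[
K_\ep := \int_0^1 \int \tfrac{1}{2}|v^\ep_t|^2 d\rh^\ep_t(x)\, dt, \qquad F_\ep := \int_0^1 I(\rh^\ep_t)\, dt.
\]
By definition of $(\rh^\ep, v^\ep)$ and \eqref{eq:EOT-schrodinger}, one immediately has
\[
OT_\ep \;=\; \ep \Cc_\ep - \tfrac{d}{2}\ep \ln(2\pi\ep) \;=\; \ep H_m + K_\ep + \tfrac{\ep^2}{8} F_\ep - \tfrac{d}{2}\ep \ln(2\pi\ep).
\]
On the other hand, \eqref{eq:expanOTep} gives $OT_\ep = \tfrac{1}{2} W_2^2 + \ep H_m - \tfrac{d}{2}\ep \ln(2\pi\ep) + o(\ep)$. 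Subtracting these two identities yields the crucial comparison
\[
K_\ep + \tfrac{\ep^2}{8} F_\ep \;=\; \tfrac{1}{2} W_2^2 + o(\ep). \qquad (\star)
\]

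The key observation is then that $(\rh^\ep, v^\ep)$ is admissible in the classical (non-regularized) Benamou--Brenier formula: the constraints in \eqref{eq:BB} are precisely that $(\rh^\ep_t)$ is a weak solution of the continuity equation $\p_t \rh + \mathrm{div}(\rh v) = 0$ joining $\mu_0$ to $\mu_1$. Consequently
\[
K_\ep \;\geq\; \tfrac{1}{2} W_2^2(\mu_0,\mu_1).
\]
Combined with the trivial $\tfrac{\ep^2}{8} F_\ep \geq 0$, relation $(\star)$ writes the $o(\ep)$ quantity $K_\ep + \tfrac{\ep^2}{8}F_\ep - \tfrac{1}{2}W_2^2$ as a sum of two nonnegative terms. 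Hence each term is itself $o(\ep)$, giving simultaneously
\[
\tfrac{\ep^2}{8} F_\ep = o(\ep) \;\Longrightarrow\; \ep F_\ep \to 0, \qquad K_\ep - \tfrac{1}{2} W_2^2 = o(\ep) \;\Longrightarrow\; \int_0^1\!\!\int |v^\ep_t|^2 d\rh^\ep_t\, dt = 2 K_\ep = W_2^2 + o(\ep),
\]
which are exactly the two claimed asymptotics.

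There is essentially no technical obstacle: once \eqref{eq:expanOTep} is accepted as a black box from \cite{erbar2015large}, the whole argument is a one-line nonnegativity split. The only points that deserve a sentence of justification are (i) that the Benamou--Brenier minimizer $(\rh^\ep, v^\ep)$ is indeed a valid competitor in the classical $W_2$ dynamic formulation (this is built into \eqref{eq:BB}), and (ii) that \eqref{eq:expanOTep} applies under hypothesis \textbf{(H1)}, which is immediate since compact support and finite Fisher information are strictly stronger than what \cite[Claim 4.1]{erbar2015large} requires. The elegance of the argument is that it avoids any direct estimation of the Fisher term: the $o(\ep)$ bound on $\ep F_\ep$ is extracted for free from the gap between the $OT_\ep$ expansion and the classical BB lower bound.
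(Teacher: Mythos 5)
Your argument is correct and is essentially the paper's own proof: both compare the expansion \eqref{eq:expanOTep} with the value of \eqref{eq:BB} at its minimizer via \eqref{eq:EOT-schrodinger}, observe that the resulting $o(\ep)$ discrepancy splits into two nonnegative terms (the kinetic excess over $\tfrac12 W_2^2$, nonnegative by admissibility of $(\rh^\ep,v^\ep)$ in the classical Benamou--Brenier formula, and the Fisher term), and conclude that each is separately $o(\ep)$. The only cosmetic difference is that the paper works with $\Cc_\ep - \tfrac{1}{2\ep}W_2^2 - H_m \to 0$, i.e.\ your identity $(\star)$ divided by $\ep$.
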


\begin{proof}
Since the Fisher information of the \textcolor{black}{marginals} is finite, from \cite[Claim 4.1]{erbar2015large} the Taylor expansion \eqref{eq:expanOTep} of $OT_\ep$ holds. But since $OT_\ep = \ep \Cc_\ep - \frac{d}{2} \ep \ln(2\pi\ep)$, expansion \eqref{eq:expanOTep} is equivalent to the following:
\[ \Cc_\ep - \frac{1}{2\ep} \textcolor{black}{W_2^2(\mu_0,\mu_1)} - H_m \rightarrow 0.
\]
However from the expression of $\Cc_\ep$ with the minimal continuous path $(\rh^\ep,v^\ep)$ of equation \eqref{eq:BB},
\[ \Cc_\ep - \frac{1}{2\ep} \textcolor{black}{W_2^2(\mu_0,\mu_1)} - H_m = \frac{1}{\ep} \left( \int_0^1 \int \frac{1}{2} |v_t^\ep|^2 d\rh_t(x) dt - \frac{1}{2} \textcolor{black}{W_2^2(\mu_0,\mu_1)} \right) + \frac{\ep}{8} \int_0^1 \int \frac{\|\nabla \rh_t^\ep\|^2}{\rh_t^\ep} dx dt.
\]
Both terms $\frac{1}{\ep} \left[ \int_0^1 \int \frac{1}{2} |v_t^\ep|^2 d\rh_t(x) dt - \frac{1}{2} W_2^2(\mu_0,\mu_1) \right]$ and $\frac{\ep}{8} \int_0^1 \int \frac{\|\nabla \rh_t^\ep\|^2}{\rh_t^\ep} dx dt$ are positive, the first one because $(\rh^\ep,v^\ep)$ is a path from $\mu_0$ to $\mu_1$ solving the continuity equation, and so the ($\ep =0$) Benamou-Brenier formula holds.
Since the sum tends to zero, we know that both terms tend to zero which is what we needed to prove.
\end{proof}

Proposition \ref{prop:fisher_expansion2} can be seen as a refinement of \cite[Lemma 3.3]{gentil2020entropic} where, among other asymptotics, it is shown that $\ep^2 \textcolor{black}{\int_0^1 I(\rh^\ep_t) dt} \rightarrow 0$ and $\int_0^1 \int |v_t^\ep|^2 d\rh_t(x) dt \to \textcolor{black}{W_2^2(\mu_0,\mu_1)} $.
We can now combine Propositions \ref{prop:fisher_expansion1} and \ref{prop:fisher_expansion2} to obtain a Taylor expansion of $(c,\ga_\ep)$ and $H(\ga_\ep)$:

\begin{theorem}\label{thm:expansions}
Suppose that the cost is quadratic, let $\mu_0,\mu_1 \in \P_{ac}(\R^d)$ and suppose that for $i =1,2$, we have  $I(\mu_i) < + \infty$ and the support of $\mu_i$ is compact. Then
\begin{equation}\label{eq:H(ga_ep)}
    H(\ga_\ep) = - \frac{d}{2} \textcolor{black}{\ln}(2\pi\ep) + H_m - \frac{d}{2} + o(1)
\end{equation}
and
 \begin{equation}\label{eq:(c,ga_ep)}
     (c,\ga_\ep) = \frac{1}{2} W_2^2 + \frac{d}{2} \ep + o(\ep).
 \end{equation} 
\end{theorem}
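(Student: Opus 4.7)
The theorem is essentially a corollary of Propositions \ref{prop:fisher_expansion1} and \ref{prop:fisher_expansion2}, both of which are established earlier in the section. The plan is therefore to combine the coupling system of Proposition \ref{prop:fisher_expansion1} with the asymptotic estimates of Proposition \ref{prop:fisher_expansion2}, and verify that the error terms propagate with the correct orders.

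First, recall from Proposition \ref{prop:fisher_expansion1} that under the compact support and finite entropy assumptions (which are implied by $\bf{(H1)}$ since finite Fisher information forces finite differential entropy), we have the identities
\begin{align}
(c,\ga_\ep) &= \int_0^1 \int \tfrac{1}{2}|v_t^\ep|^2 d\rh_t^\ep(x)\, dt - \tfrac{\ep^2}{8} \int_0^1 I(\rh_t^\ep)\, dt + \tfrac{d}{2}\ep, \\
H(\ga_\ep) &= \tfrac{\ep}{4} \int_0^1 I(\rh_t^\ep)\, dt - \tfrac{d}{2}\ln(2\pi\ep) + H_m - \tfrac{d}{2}.
\end{align}
Proposition \ref{prop:fisher_expansion2}, which requires the stronger Fisher information assumption, additionally gives $\ep \int_0^1 I(\rh_t^\ep)\, dt \to 0$ and $\int_0^1 \int |v_t^\ep|^2 d\rh_t^\ep(x)\, dt = W_2^2 + o(\ep)$ as $\ep \to 0$.

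For the entropy expansion \eqref{eq:H(ga_ep)}, I would directly substitute the first limit from Proposition \ref{prop:fisher_expansion2} into the second line of the coupling system: the Fisher term $\frac{\ep}{4} \int_0^1 I(\rh_t^\ep)\, dt = \frac{1}{4}\bigl[\ep \int_0^1 I(\rh_t^\ep)\, dt\bigr]$ is $o(1)$, and the remaining terms are exactly those in the claimed expansion.

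For the cost expansion \eqref{eq:(c,ga_ep)}, I would plug both asymptotics from Proposition \ref{prop:fisher_expansion2} into the first line of the system. The kinetic term contributes $\frac{1}{2}W_2^2 + o(\ep)$ by assumption, and the Fisher-information correction satisfies $\frac{\ep^2}{8}\int_0^1 I(\rh_t^\ep)\, dt = \frac{\ep}{8}\bigl[\ep \int_0^1 I(\rh_t^\ep)\, dt\bigr] = \ep \cdot o(1) = o(\ep)$, so that the only surviving term beyond $\frac{1}{2}W_2^2$ is $\frac{d}{2}\ep$, up to $o(\ep)$. There is no real obstacle here: all the analytic work (the envelope theorem application, smoothness of $\ep \mapsto OT_\ep$, and the $\ep$-refinement of \cite[Lemma 3.3]{gentil2020entropic}) has already been absorbed into the two preceding propositions, and the theorem is simply the arithmetic combination of their conclusions.
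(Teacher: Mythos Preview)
Your proposal is correct and follows exactly the same approach as the paper: verify that the hypotheses of Propositions \ref{prop:fisher_expansion1} and \ref{prop:fisher_expansion2} are met (finite Fisher information implies finite differential entropy), then substitute the asymptotics of the latter into the coupling system of the former. Your write-up is in fact slightly more explicit than the paper's in tracking the $o(1)$ and $o(\ep)$ error terms, but the argument is the same.
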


\begin{proof}
    Since a finite Fisher information implies a finite differential entropy, the hypothesis of Propositions \ref{prop:fisher_expansion1} and \ref{prop:fisher_expansion2} hold and we can inject the asymptotics \textcolor{black}{\eqref{eq:convergence_fisher}} on \textcolor{black}{$\int_0^1 I(\rh^\ep_t) dt$} and $\int_0^1 \int |v_t^\ep|^2 d\rh_t(x) dt$  obtained in the latter in the system \eqref{eq:system} of the former.
\end{proof}
\begin{remark}$\quad$

  The rate in $\frac{d}{2} \ep$ for the \textcolor{black}{suboptimality} $(c,\ga_\ep)-(c,\ga_0)$ had been already observed in the case of \textcolor{black}{Gaussian} measures in 1D (see introductions of \cite{altschuler2022semidisc},\cite{bernton2022entropic}). It is astonishing to remark that this \textcolor{black}{first-order} term does not depend on $\mu_0$ and $\mu_1$. Hence, the estimator $(c,\ga_\ep) - \frac{d}{2} \ep$ could provide an interesting estimate of $W_2^2(\mu_0,\mu_1)$ with a lack of precision comparable to the one of the Sinkhorn \textcolor{black}{divergence} $OT_\ep(\mu_0,\mu_1) -\frac{1}{2}[OT_\ep(\mu_0,\mu_0) + OT_\ep(\mu_1,\mu_1)]$ (see \cite{Feydy}).
\end{remark}

\textcolor{black}{In Theorem \ref{thm:expansions}, the compact support assumption is somewhat technical. As mentioned above, the Gaussian measures provide a class of examples for which the solution $\ga_\ep$ is explicit. In this case, the result holds while the compact support assumption is violated. The sharpness of the other assumptions (quadratic cost and finite Fisher information) is harder to track.}

\subsection{Rephrasing the results with different functionals}
The asymptotic formula for the entropy \eqref{eq:H(ga_ep)} and the formula for $(c,\ga_\ep)$ in the system \eqref{eq:system} suggest to introduce two quantities that are linked to our problem. First, the \emph{Entropy power function} $N_d(\mu) := \frac{e^{-\frac{2}{d} H(\mu\mid\mathcal{H}^d)}}{2\pi e}$. This functional has various links with our problem: Costa's Theorem states that it is concave along the heat flow (see \cite{costa1985new} \textcolor{black}{and \cite{newcosta2022tamanini}  for a generalization}), $\sqrt{N_{d}}$ is concave along Wasserstein geodesics (see \cite{erbar2015equivalence}) and along Schrödinger bridges (see \cite{ripani2019convexity}). And it turns out that the \textcolor{black}{asymptotics} \eqref{eq:H(ga_ep)} can be rewritten in the simple way\footnote{Remark that since the functional $N$ is multiplicative, the \textcolor{black}{right-hand side} is the $4d$ entropy power of $\mu_0 \otimes \mu_1 \otimes \Nc(0,\frac{\ep}{2d}I_{2d})$.}
\[
N_{2d}(\ga_\ep) \sim N_d(\mu_0)^{1/4}N_d(\mu_1)^{1/4}\ep^{1/2}.
\]

The other quantity that we need to present is the \emph{energy}\footnote{See the introduction of \cite{conforti2019formula} for a discussion on the name energy.}
\[ \Ec_\ep := \displaystyle \frac{1}{\ep^2} \int \frac{1}{2} |v_t^\ep|^2 d\rh_t(x) - \frac{1}{8} I(\rh_t^\ep).
\]
As the notation hints and as it is proved in \cite[Corollary 1.1]{conforti2019second} and \cite[Lemma 3.3]{gentil2020entropic}, this energy does not depend on the time t.
With this quantity, the expression of $(c,\ga_\ep)$ in the system \eqref{eq:system} becomes $(c,\ga_\ep) = \ep^2 \Ec_\ep + \frac{d}{2}\ep$, and so the Taylor expansion \eqref{eq:(c,ga_ep)} expresses
\[
\ep^2 \Ec_\ep = \frac{1}{2} W_2^2 + o(\ep).
\]

Note that it was already known that $\ep^2 \Ec_\ep \to \frac{1}{2}W_2^2$ (see the introduction of \cite{conforti2019formula} or the proof of \cite[Lemma 3.3]{gentil2020entropic}). The convergence in $o(\ep)$ seems new to us. Given the expression for $\frac{d}{d\ep}[\ep \Cc_\ep]$ of equation \eqref{eq:ep_derivative}, the other Taylor expansion \eqref{eq:H(ga_ep)} corresponding to \textcolor{black}{$\ep \int_0^1 I(\rh^\ep_t) dt \to 0$} is also equivalent to the fact that $\ep\Cc_\ep$ is $\Cc^1$ in 0 with derivative $H_m$. It can be seen as an extension in 0 of \cite[Theorem 1.1]{conforti2019second}.
\section{Rates for the quadratic cost}\label{sec:quadratic-cost-rates}
In the \textcolor{black}{previous} section, under the strong assumptions $\bm{(H1)}$ of smoothness ($I(\mu_i) < + \infty$) and concentration ($\text{supp}(\mu_i)$ compact), the Taylor expansions of \eqref{eq:EOT} and \eqref{eq:BB} allowed us to give precise rates for the entropy as well as the cost term in \eqref{eq:EOT}. The main idea was to disentangle the role of the entropy and the cost. In this section we choose the weaker set of assumptions $\bm{(H2)}$. The cost is still the quadratic one: $c(x,y) = \frac{1}{2}\Vert x-y \Vert^2$, but the Fisher information \textcolor{black}{needs} not to be finite and so the \textcolor{black}{second-order Taylor expansion} \eqref{eq:expanOTep} of \eqref{eq:EOT} needs not to hold. However, the entropy $H(\mu_i)$ is still supposed finite. Thanks to the technical assumption that the moments of order $2+\de$ of the marginals are finite for some $\de>0$, we still have (see Lemma \ref{lem:otep-ot-constant-in-rate}) some rates of convergence for the value of \eqref{eq:EOT} of the form 
\begin{equation} \label{eq:premier_ordre_EOT}
    0 \leq OT_\ep - OT_0 \leq -\frac{d}{2}\ep \ln(\ep) + O(\ep).
\end{equation}
From that, it is still possible to disentangle the cost and the entropy (Theorem \ref{thm:rate-cost-quadratic})
through a careful study of the behaviour of $H(\ga_\ep)$ as $\ep \to 0$. It will grant rates for $(c,\ga_\ep) - (c,\ga_0)$ of the same order $\ep$ as in last section. We start by providing a sketch of the proof: \\
Let E be a duality gap function $E = c - (\ph \oplus \ps)$. Recall that since $\ga_\ep$ has $\mu_i$ as marginals, we have that $\int E d\ga_\ep = (c,\ga_\ep)-(c,\ga_0)$, and so we will denote by $(E,\ga_\ep)$ this suboptimality.
The goal is to prove \textcolor{black}{that} $(E,\ga_\ep) \simeq \ep$ and $H(\ga_\ep) \simeq -\frac{d}{2}\ln(\ep)$, so it is natural to try to prove first $H(\ga_\ep) \simeq - \frac{d}{2}\ln(E,\ga_\ep)$, or in terms of entropy power, $N_{2d}(\ga_\ep) \simeq \sqrt{(E,\ga_\ep)}$. But thanks to the information encoded in inequality~\eqref{eq:premier_ordre_EOT}, it is enough to show an inequality of the type $N_{2d}(\ga_\ep) \leq C\sqrt{(E,\ga_\ep)}$. \\
In fact, as explained in detail in Section \ref{sec:low-bound}, this kind of inequality is not specific to $\ga_\ep$ and it comes from the following fact: the contact set $\Si := \{\textcolor{black}{(x,y) \in \mathbb{R}^d\times \mathbb{R}^d} / E(x,y) = 0\}$ has d dimensions less than the \textcolor{black}{ambient} space $\R^{2d}$ and $(E,\ga)$ quantifies the average distance of $\ga$ to $\Si$ because E has a \emph{quadratic detachment} in Minty's coordinates (see definitions \ref{def:quadratic-detachment} and \ref{def:Minty}). As a matter of fact, $W_2^2(\ga,\ga_0)$ also quantifies this distance, because $\ga_0$ is supported on $\Si$. In Proposition \ref{prop:mino-ent-quadratic} we obtain results of the form
\begin{equation}\label{eq:explication-mino-entropy-was}
    N_{2d}(\ga) \leq C \sqrt{(E,\ga)} \quad \emph{and} \quad N_{2d}(\ga) \leq C W_2(\ga,\ga_0)
\end{equation}
or equivalently in term of entropy, 
\begin{equation}\label{eq:explication-mino-entropy-E}
        H(\ga) \geq -\frac{d}{2}\ln(E,\ga) + C \quad \emph{and} \quad H(\ga) \geq -\frac{d}{2}\ln(W_2^2(\ga,\ga_0)) + C.
\end{equation} These inequalities are the key for all our rates of convergence of $(E,\ga_\ep)$, $W_2^2(\ga_\ep,\ga_0)$ and $H(\ga_\ep)$ detailed in Section \ref{sec:rates-quad}.

\subsection{Lower bounds on the entropy} \label{sec:low-bound}
The results of this section are completely independent of the optimality of $\ga_\ep$, and even independent of the fact that $\ga_\ep$ transports $\mu_0$ on $\mu_1$. Hence in the following, we will replace $\ga_\ep$ by a generic $\ga \in \P_{ac}(\R^{2d})$.
The results presented in this section are lower bounds on its entropy $H(\ga)$. These inequalities rely on the comparison of $\ga$ with a Gaussian along directions transverse to the support of a generic optimal plan $\ga_0$ for $\ep = 0$. Indeed we know that any optimal plan $\ga_0$ is supported on $\Sigma = \{E=0\}$ which is \textcolor{black}{approximately} a submanifold of (co)dimension $d$. We will then show that $\int E d\ga$ as well as $W^2_2(\ga,\ga_0)$ control the variance of $\ga$ along transverse directions to $\Sigma$. Finally since under variance constraint the Gaussian has the smallest entropy we will conclude with the wanted lower bounds of equations \eqref{eq:explication-mino-entropy-was} and \eqref{eq:explication-mino-entropy-E}. The same strategy will work in the case of twisted costs. However in the quadratic case the proof requires only a global change of variable (Minty's trick) which makes the arguments clearer. We start with the fundamental property of a function that allows for estimates like \eqref{eq:explication-mino-entropy-was} or \eqref{eq:explication-mino-entropy-E}, the \emph{quadratic detachment}.

\begin{definition}[Quadratic detachment]\label{def:quadratic-detachment}
    Let $G : \R^d \times \R^d \to \mathbb{R}_+$ be a function. We say that $G$ has a \emph{quadratic detachment} if for any $u \in \R^d$ it exists $v_u \in \R^d$ such that
    \begin{equation}
       \forall (u,v) \in \R^{2d}  \quad G (u,v) \geq \frac{1}{2}\Vert v - v_u \Vert^2.
    \end{equation}
\end{definition}

This quadratic detachment property allows for a bound on the differential entropy \textcolor{black}{$H(\ga):=H(\ga\mid\mathcal{H}^{2d})$} of any plan $\ga \in \P_{ac}(\R^{2d})$:

\begin{proposition}\label{prop:detachment-entropy}
    Let $G$ be a function on $\R^d \times \R^d$ and $\ga \in \P_{ac}(\R^{2d})$ and denote $C_d := -\frac{d}{2}\ln(\frac{4 \pi e}{d})$. If G has a quadratic detachment 
    \begin{equation}
        H(\ga) \geq -\frac{d}{2}\ln\left(\int G d\ga \right) + H(\ga_1) + C_d
    \end{equation} 
\textcolor{black}{where} $\ga_1$ is the projection of $\ga$ on the \textcolor{black}{first d coordinates}, \textcolor{black}{i.e.} the first marginal of $\ga$, and where H is the differential entropy \textcolor{black}{($H(\ga) = H(\ga\mid\mathcal{H}^{2d})$ and $H(\ga_1) = H(\ga_1\mid\mathcal{H}^d)$)}.
\end{proposition}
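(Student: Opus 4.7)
The plan is to combine the disintegration of $\gamma$ along its first marginal with the maximum-entropy property of Gaussians under a second-moment constraint. The quadratic detachment buys us control of a certain conditional second moment; that moment in turn controls the conditional entropy through a Gaussian comparison; and Jensen then packages the conditional estimates into the global bound.

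First, I would disintegrate $\gamma = \gamma_1 \otimes \gamma^u$, where $\gamma^u \in \P_{ac}(\R^d)$ is the conditional law of $v$ given $u$; this is well-defined for $\gamma_1$-a.e.\ $u$ because $\gamma \ll \H^{2d}$. The additive decomposition of $\int \rho \ln \rho$ under the product structure $\gamma(du,dv) = \gamma_1(du)\,\gamma^u(dv)$ gives
\[
H(\gamma) \;=\; H(\gamma_1) \;+\; \int H(\gamma^u)\, d\gamma_1(u),
\]
so it suffices to lower-bound the conditional entropy term.

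Next, for each $u$ the quadratic detachment would furnish a point $v_u$ with
\[
\int G(u,v)\, d\gamma^u(v) \;\geq\; \tfrac{1}{2}\int \|v-v_u\|^2\, d\gamma^u(v) \;\geq\; \tfrac{1}{2}\,\mathrm{tr}(\mathrm{Cov}(\gamma^u)),
\]
the last inequality because the second moment of $\gamma^u$ about any point dominates the one about its mean. Since the Gaussian is the minimizer of $\int \rho \ln \rho$ at fixed covariance, one has $H(\gamma^u) \geq -\tfrac{1}{2}\ln((2\pi e)^d \det \mathrm{Cov}(\gamma^u))$, and AM-GM on the eigenvalues of $\mathrm{Cov}(\gamma^u)$ then upgrades this into
\[
H(\gamma^u) \;\geq\; -\frac{d}{2}\ln\!\left(\frac{2\pi e}{d}\,\mathrm{tr}\,\mathrm{Cov}(\gamma^u)\right) \;\geq\; -\frac{d}{2}\ln\!\left(\frac{4\pi e}{d}\int G(u,\cdot)\, d\gamma^u\right).
\]

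Finally, I would integrate this pointwise bound against $\gamma_1$ and apply Jensen's inequality (convexity of $-\ln$) to pull the integral inside the logarithm, obtaining
\[
\int H(\gamma^u)\, d\gamma_1(u) \;\geq\; -\frac{d}{2}\ln\!\left(\frac{4\pi e}{d}\int G\, d\gamma\right) \;=\; -\frac{d}{2}\ln\!\left(\int G\, d\gamma\right) + C_d,
\]
using $-\tfrac{d}{2}\ln(\tfrac{4\pi e}{d}) = C_d$. Injecting this into the disintegration identity gives the announced bound. The main obstacle is bookkeeping rather than conceptual: I would need to verify that the Gaussian extremality and the Jensen step remain valid in $[-\infty,+\infty]$, and that the degenerate situations ($\int G\, d\gamma = +\infty$, $\mathrm{tr}\,\mathrm{Cov}(\gamma^u) = +\infty$, or $H(\gamma^u) = -\infty$) make the right-hand side equal $-\infty$ and thereby render the inequality trivial.
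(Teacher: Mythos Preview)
Your proof is correct and follows essentially the same route as the paper: disintegrate $\gamma$ along its first marginal, use the Gaussian extremality to bound $H(\gamma^u)$ by the trace of the conditional covariance, control that trace via the quadratic detachment, and finish with Jensen. The only cosmetic difference is that the paper compares directly to the isotropic Gaussian $\mathcal{N}(0,\tfrac{\mathrm{Var}(\gamma^u)}{d}I_d)$ rather than passing through $\det\mathrm{Cov}(\gamma^u)$ and AM--GM, and it disposes of the degenerate cases $H(\gamma)=+\infty$ or $\int G\,d\gamma=+\infty$ upfront.
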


\begin{proof}
If $H(\ga) = +\infty$ or $\int G d\ga = +\infty$ there is nothing to prove. We assume that $H(\ga) < +\infty$ and $\int G d\ga < +\infty$.
Let $\ga = \ga_1 \otimes \ga^u$ be the disintegration of $\ga$ \textcolor{black}{with respect to the projection on the first d coordinates}. $\int G d\ga < +\infty$ implies that $\ga^u$ has a finite moment of order $2$, $\ga_1$ almost everywhere, because $G$ has a quadratic detachment. \textcolor{black}{So its differential entropy $H(\ga^u)$ is well defined. Let $g_d$ denote the standard d-dimensional normal distribution. We will use the link between the differential entropy and the entropy with respect to $g_d$ \cite[Annexe A]{leonard} as well as the additivity property of the entropy (see equation (A.8) of \cite{leonard} or \cite[Theorem 2.4]{leonard2014some}) to express $H(\ga)$ with $H(\ga^u)$:
    \begin{align}\label{eq:additivity}
    \begin{split}
         H(\ga) &= H(\ga|g_{2d}) -d\ln(2\pi) - \frac{1}{2}\int\Vert x\Vert^2d\ga(x)
         \\
         &=  H(\ga_1|g_d) + \int H(\ga^u |g_d) d\ga_1(u) - d\ln(2\pi) - \frac{1}{2}\int(|u|^2 + |v|^2) d\ga^u(v) d\ga_1(u) \\
         &= H(\ga_1|g_d) - \frac{d}{2}\ln(2\pi) -  \frac{1}{2}m_2(\ga_1) + \int \left( H(\ga^u |g_d) -  \frac{d}{2}\ln(2\pi) -  \frac{1}{2}m_2(\ga^u) \right) d\ga_1(u) \\
         &= H(\ga_1) + \int H(\ga^u) d\ga_1(u).
         \end{split}
    \end{align}
}
       
    However under variance constraint the \textcolor{black}{Gaussian} with independent coordinates has the smallest entropy, thus
        $H(\ga^u) \geq H( \mathcal{N}(0,\frac{\text{Var}(\ga^u)}{d} I_d)) = -\frac{d}{2}\ln\left( \text{Var}(\ga^u) \right) -\frac{d}{2}\ln(\frac{2 \pi e}{d})$.
Moreover, the average of a random variable minimizes the average square distance, so $
\text{Var}(\ga^u) \leq \int \| v - v_u \|^2 d\textcolor{black}{\ga^u}(v) \leq 2 \int G(u,v) d\textcolor{black}{\ga^u}(v)$ where the last inequality holds because G has a quadratic detachment. So we have
    \begin{equation}
        H(\ga) \geq H(\ga_1) - \int \frac{d}{2}\ln\left( 2 \int G(u,v)d \ga^u(v) \right) d\ga_1(u) -\frac{d}{2}\ln(\frac{2 \pi e}{d}).
    \end{equation}

Now by concavity of the logarithm, exchanging it with the integral against $\ga_1$ gives an even lower quantity, hence we obtain the desired inequality.
\end{proof}

This result shows a clear path to get lower bounds on the entropy of $\ga_\ep$: typically, it is enough to show that the duality gap has a quadratic detachment to show inequality \eqref{eq:explication-mino-entropy-E}. However, E has no quadratic detachment in the classic $(x,y)$ coordinates, but in transverse coordinates, that we call \emph{Minty's coordinates} (\cite{minty1962trick}).

\begin{definition}[Minty's coordinates] \label{def:Minty}
    We wil call \emph{Minty's coordinates} the coordinates $(u,v) = \frac{1}{\sqrt{2}}(x+y,x-y) $. The change of variable $(x,y) \mapsto (u,v)$ is
 an isometry and for any convex function $f$, the graph of the subdifferential of $f$ in the $(x,y)$-coordinates corresponds to the graph of a 1-Lipschitz function defined on the whole space  in the $(u,v)$-coordinates (see \cite[Theorem 12.12, Theorem 12.25]{RockWets1998variationalanalysis}).
Moreover, for $\ga \in \P(\R^{2d})$, we will denote by $\hat{\ga}$ the pushforward of $\ga$ through this change of variable and call \emph{Minty's decomposition} of $\ga$ the disintegration of $\hat{\ga}$ with respect to the orthogonal projection onto the \textcolor{black}{$u$} coordinate $\hat{\ga} = \hat{\mu} \otimes \hat{\ga}^u$.
\end{definition}

The quadratic detachment of E in \textcolor{black}{these} coordinates follows from the slightly more general inequality \eqref{eq:minty-trick}, known as Minty's trick. It can be generalized for twisted costs (see Lemma \ref{lem:mtw-minty-trick}). In the quadratic case, it is somehow folklore and we recall the proof, but for more general cost, it was proved in \cite{carlier2022convergence}, following arguments of
\cite{mccann2012rectifiability}. We also prove the quadratic detachment of the function $d(.,\Si)^2$ that we will use to bound the entropy with the Wasserstein distance to an optimizer.

\begin{lemma} \label{lem:minty}
Let $\mu_0,\mu_1 \in \P_2(\R^d)$ and $E := c - (\ph \oplus \ps)$ the duality gap of the quadratic optimal transport problem from $\mu_0$ to $\mu_1$. Denote by $d(z,\Si)$ the distance of a point $z \in \R^{2d}$ to the set $\Si = \{E=0\}$. In Minty's coordinates, the functions $(u,v) \mapsto E(u,v)$ and $(u,v) \mapsto d((u,v),\Si)^2$ have quadratic detachment.\footnote{\textcolor{black}{Here}, by a slight abuse of notation, $E$ denotes $E\circ \Ph^{-1}$ where $\Ph: (x,y) \mapsto (u,v)$ is Minty's change of variable (definition \ref{def:Minty}).}
\end{lemma}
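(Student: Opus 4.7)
The plan is to unpack the standard reformulation of the duality gap for the quadratic cost and then feed it into Minty's monotone reparametrization.

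First I would write $c(x,y) = \tfrac{1}{2}\|x\|^2 + \tfrac{1}{2}\|y\|^2 - \langle x,y\rangle$ and set $f(x) := \tfrac{1}{2}\|x\|^2 - \ph(x)$, $g(y) := \tfrac{1}{2}\|y\|^2 - \ps(y)$, so that $E(x,y) = f(x) + g(y) - \langle x,y\rangle$ with $f$ and $g$ a pair of conjugate convex functions (c-conjugacy of $\ph,\ps$ for the quadratic cost translates exactly into Legendre duality of $f,g$). The contact set becomes $\Si = \{(x,y) : y \in \p f(x)\} = \{(x,y) : x \in \p g(y)\}$. The key pointwise inequality (Minty's trick in the quadratic case) is that for any $(x_0,y_0) \in \Si$,
\begin{equation*}
E(x,y) \;\geq\; -\langle x - x_0,\, y - y_0\rangle,
\end{equation*}
which follows from $E(x_0,y_0)=0$ together with the two subgradient inequalities $f(x) - f(x_0) \geq \langle y_0, x-x_0\rangle$ and $g(y) - g(y_0) \geq \langle x_0, y-y_0\rangle$.

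Next I would switch to Minty's coordinates $(u,v) = \tfrac{1}{\sqrt 2}(x+y,\,x-y)$. Since this change of variables is a linear isometry and $x - x_0 = \tfrac{1}{\sqrt 2}((u-u_0)+(v-v_0))$, $y-y_0 = \tfrac{1}{\sqrt 2}((u-u_0)-(v-v_0))$, the inner product becomes $\langle x-x_0,y-y_0\rangle = \tfrac{1}{2}(\|u-u_0\|^2 - \|v-v_0\|^2)$, and the inequality above rewrites, in the new coordinates, as
\begin{equation*}
E(u,v) \;\geq\; \tfrac{1}{2}\bigl(\|v - v_0\|^2 - \|u - u_0\|^2\bigr) \qquad \text{for every } (u_0,v_0) \in \widehat{\Si}.
\end{equation*}
Now I invoke the Minty reparametrization result recalled in Definition \ref{def:Minty}: $\widehat\Si$ is the graph of a $1$-Lipschitz map $T : \R^d \to \R^d$ defined on the whole space. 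For each $u$ I set $v_u := T(u)$ and apply the inequality with $(u_0,v_0) = (u,v_u)$ (which lies in $\widehat\Si$), killing the $\|u-u_0\|^2$ term and yielding $E(u,v) \geq \tfrac{1}{2}\|v - v_u\|^2$. This is the quadratic detachment of $E$.

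For the distance function $d(\cdot, \widehat\Si)^2$ I use the same candidate $v_u = T(u)$. Let $(u_0, T(u_0))$ be any point of $\widehat\Si$; then $\|(u,v) - (u_0,T(u_0))\|^2 = \|u-u_0\|^2 + \|v - T(u_0)\|^2$. The triangle inequality combined with $\|T(u)-T(u_0)\| \leq \|u-u_0\|$ gives $\|v - T(u_0)\| \geq \|v - T(u)\| - \|u - u_0\|$. Setting $a := \|v-T(u)\|$ and $b := \|u-u_0\|$, a two-line case split (either $b \leq a$, so $b^2 + (a-b)^2 \geq \tfrac{1}{2}a^2$, or $b > a$, so already $b^2 \geq a^2 \geq \tfrac{1}{2}a^2$) yields $\|u-u_0\|^2 + \|v-T(u_0)\|^2 \geq \tfrac{1}{2}\|v - v_u\|^2$ uniformly in $(u_0,T(u_0))$. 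Infimizing over $u_0$ gives the claimed quadratic detachment for $d(\cdot,\widehat\Si)^2$.

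The only non-bookkeeping step is really the appeal to Minty's reparametrization to know that the $1$-Lipschitz section $T$ exists and is defined on all of $\R^d$ — without this, one could only pick $(u_0,v_0)$ in $\widehat\Si$ but not at the prescribed first coordinate $u_0=u$, and the $\|u-u_0\|^2$ term could not be killed. Everything else is the convex duality identity $E = f \oplus g - \langle\cdot,\cdot\rangle$, the subgradient inequality, and an isometric change of variables.
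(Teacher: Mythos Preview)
Your proof is correct and follows essentially the same route as the paper's: the convex rewriting $E=f\oplus f^*-\langle\cdot,\cdot\rangle$, the Minty inequality (you prove the special case with $(x_0,y_0)\in\Si$, the paper proves the symmetric two-point version $E(x,y)+E(x',y')\geq\langle x-x',y'-y\rangle$, but either suffices once one specializes to a point on $\widehat\Si$), the appeal to the $1$-Lipschitz Minty parametrization $u\mapsto v_u$, and the distance estimate via the Lipschitz bound. The only cosmetic difference is that for $d(\cdot,\widehat\Si)^2$ the paper uses the one-line inequality $\|a\|^2+\|b\|^2\geq\tfrac12\|a+b\|^2$ with $a=v_u-v_{u'}$, $b=v_{u'}-v$ in place of your case split, which avoids splitting but is the same estimate.
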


\begin{proof}
    First note that in the quadratic case $E$ rewrites as $E(x,y) = f(x) + f^\ast (y) - \langle x,y \rangle $ where $f$ is a convex function and $f^*$ is its Legendre-Fenchel conjugate. Thus for $(x,y),(x',y') \in \mathbb{R}^d \times \mathbb{R}^d$ we have
    \begin{equation} \label{eq:minty-intermediate}
    \begin{split}
        E(x,y) + E(x',y') &=f(x) + f^\ast (y) - \langle x,y \rangle + f(x') + f^\ast (y') - \langle x',y' \rangle \\
        & \geq \langle x,y'\rangle - f^\ast (y') + f^\ast (y) - \langle x,y \rangle + \langle x',y \rangle - \textcolor{black}{f^\ast(y)} + f^\ast(y')- \langle x',y' \rangle\\
        & \geq \langle x-x',y'-y \rangle.
    \end{split}
    \end{equation}
    In Minty's coordinates $(u,v) = \frac{1}{\sqrt{2}}(x+y,x-y)$ we can see $E$ as a function of $(u,v)$. In these coordinates, inequality \eqref{eq:minty-intermediate} becomes 
    \begin{equation}\label{eq:minty-trick}
        E(u,v) + E(u',v') \geq \frac{1}{2} (\Vert v' - v \Vert^2 - \Vert u' - u \Vert^2).
    \end{equation}
    The set $\Si = \{\textcolor{black}{(x,y) \in \mathbb{R}^d\times \mathbb{R}^d} / E(x,y)=0\}$ corresponds to the subdifferential of the convex function $f$. Hence \textcolor{black}{in Minty's coordinates} $(u,v)$, it corresponds to the graph in u of a \textcolor{black}{1-Lipschitz} function that is defined on the whole space $\R^d$ (see \cite[Theorem 12.12, Theorem 12.25]{RockWets1998variationalanalysis}). So for any $u \in \R^d$, it  exists a unique $ v_u \in \R^d$ such that  $E(u,v_u) = 0$. And thanks to Minty's trick \eqref{eq:minty-trick} we have that $E(u,v) \geq \frac{1}{2} \vert v - v_u \vert^2$. \\
    It remains to show that the distance to the graph of the \textcolor{black}{1-Lipschitz} function $u \mapsto v_u$ has a quadratic detachment. Let $u,u',v \in \R^d$. From the inequality $ \|a\|^2 + \|b\|^2 \geq \frac{1}{2} \|a+b\|^2$ applied to $a = v_u - v_{u'}$ and $b = v_{u'} - v$, we get 
    \[
    \|v_u - v_{u'}\|^2 + \|v_{u'} - v\|^2 \geq \frac{1}{2} \| v_u - v\|^2.
    \]
    And we can bound $\|v_u - v_{u'}\|^2$ by $\|u-u'\|^2$ because $u \mapsto v_u$ is \textcolor{black}{1-Lipschitz}. Hence we obtain
    \[
    \|u - u'\|^2 + \|v_{u'} - v\|^2 \geq \frac{1}{2} \| v_u - v\|^2.
    \]
    But the \textcolor{black}{left-hand side} is the \textcolor{black}{(squared) distance} from $(u,v)$ to $(u',v_{u'})$ and the inequality holds for any $u'$, so 
    \[
   \textcolor{black}{d((u,v),\Si)^2} \geq \frac{1}{2} \| v_u - v\|^2.
    \]
\end{proof}

Now that we know that E and $d(.,\Si)^2$ have quadratic detachment, we can use Proposition \ref{prop:detachment-entropy} to bound the entropy of $\ga$ from \textcolor{black}{below} with $\int E d\ga$ and $W_2^2(\ga,\ga_0)$.

\begin{proposition}\label{prop:mino-ent-quadratic}
    Suppose that the cost is quadratic. Let $\textcolor{black}{\mu_0,\mu_1} \in \P_2(\R^d)$, $(\ph, \ps)$ be the associated Kantorovich potentials and  $E : = c - (\ph \oplus \ps)$ be a duality gap function. Let $\ga_0$ be an optimal transport plan from $\mu_0$ to $\mu_1$. Then, for any $\ga \in \P_{ac}(\R^{2d})$
\begin{equation} \label{eq:low-bound-E}
        H(\ga) \geq -\frac{d}{2}\ln\left(\int E d \ga\right) + H(\hat{\mu}) + C_d
    \end{equation}
    and 
    \begin{equation} \label{eq:low-bound-W}
        H(\ga) \geq -\frac{d}{2}\ln W^2_2(\ga,\ga_0) + H(\hat{\mu}) + C_d
    \end{equation}
where $\hat{\ga} = \hat{\mu}\otimes \hat{\ga}^u$ is the Minty's decomposition of $\ga$ and $C_d := -\frac{d}{2}\ln(\frac{4 \pi e}{d})$ (see definition \ref{def:Minty}). \\
If we denote by $\si_{\ga}(X+Y)$ the standard deviation of $X+Y$ when the law of $(X,Y)$ is $\ga$, we have
    \begin{equation} \label{eq:lower-bounds}
        N_{2d}(\ga) \leq \frac{\si_{\ga}(X+Y)}{d} \sqrt{\int E d\ga} \qquad  N_{2d}(\ga) \leq \frac{\si_{\ga}(X+Y)}{d} W_2(\ga,\ga_0).
    \end{equation}
   
\end{proposition}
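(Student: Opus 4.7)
The plan is to apply Proposition~\ref{prop:detachment-entropy} (the detachment-entropy bound) after transporting everything to Minty's coordinates, where Lemma~\ref{lem:minty} provides precisely the required quadratic detachment for both $E$ and $d(\cdot,\Si)^2$. The Minty change of variable $\Ph(x,y) = \frac{1}{\sqrt{2}}(x+y,x-y)$ is a linear isometry of $\R^{2d}$, so Lebesgue measure and hence the entropy functional $H(\cdot \mid \H^{2d})$ are preserved: $H(\hat\ga) = H(\ga)$. Similarly, $\int E \, d\hat\ga = \int E \, d\ga$ (with the mild abuse of notation explained in the footnote of Lemma~\ref{lem:minty}), and the first marginal of $\hat\ga$ is exactly $\hat\mu$ from the statement.

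For the bound \eqref{eq:low-bound-E}, I would invoke Lemma~\ref{lem:minty} to get that $E$ (read in Minty coordinates) has quadratic detachment, then plug into Proposition~\ref{prop:detachment-entropy} applied to $\hat\ga$ with $G = E$, which gives directly $H(\ga) = H(\hat\ga) \geq -\tfrac{d}{2}\ln\bigl(\int E\, d\ga\bigr) + H(\hat\mu) + C_d$. For \eqref{eq:low-bound-W}, I would apply the same proposition to $\hat\ga$ with $G = d(\cdot,\Si)^2$, which also has quadratic detachment by Lemma~\ref{lem:minty}. To replace $\int d(z,\Si)^2 d\ga(z)$ by $W_2^2(\ga,\ga_0)$ on the right-hand side, I would pick any optimal $W_2$-coupling $\pi \in \Pi(\ga,\ga_0)$ and use that $\ga_0$ is supported on $\Si$ to write
\begin{equation*}
    \int d(z,\Si)^2 \, d\ga(z) = \int d(z,\Si)^2 \, d\pi(z,z') \leq \int \|z-z'\|^2 \, d\pi(z,z') = W_2^2(\ga,\ga_0),
\end{equation*}
then use monotonicity of $-\tfrac{d}{2}\ln$ together with the detachment bound.

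For the $N_{2d}$ formulation \eqref{eq:lower-bounds}, I would exponentiate: from \eqref{eq:low-bound-E},
\begin{equation*}
N_{2d}(\ga) = \tfrac{1}{2\pi e}e^{-H(\ga)/d} \leq \tfrac{1}{\sqrt{\pi e d}}\, e^{-H(\hat\mu)/d}\,\sqrt{\textstyle\int E \, d\ga},
\end{equation*}
after simplifying $\tfrac{1}{2\pi e}e^{-C_d/d} = \tfrac{1}{\sqrt{\pi e d}}$. Then I would apply the Gaussian extremality in the paper's convention (where $H$ denotes KL to Lebesgue, so the Gaussian \emph{minimizes} $H$ at fixed variance) to get $e^{-H(\hat\mu)/d} \leq \sqrt{2\pi e\, \mathrm{Var}(\hat\mu)/d}$. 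Since $X+Y = \sqrt{2}\,U$ in Minty coordinates, $\si_\ga(X+Y)^2 = 2\,\mathrm{Var}(\hat\mu)$, and the two prefactors combine cleanly into $\si_\ga(X+Y)/d$. The same argument with \eqref{eq:low-bound-W} yields the $W_2$ version.

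I do not expect any serious obstacle: all ingredients are already in place (the detachment of $E$ and $d(\cdot,\Si)^2$ from Lemma~\ref{lem:minty}, the abstract inequality of Proposition~\ref{prop:detachment-entropy}, and Gaussian extremality). The one point needing attention is keeping track of constants when converting from the entropy form to the $N_{2d}$ form; the fact that the constants collapse into exactly $\si_\ga(X+Y)/d$ is what makes the Gaussian-extremality step work and is the only nontrivial piece of bookkeeping.
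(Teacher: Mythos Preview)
Your proposal is correct and follows essentially the same route as the paper: apply Lemma~\ref{lem:minty} to get quadratic detachment of $E$ and $d(\cdot,\Si)^2$ in Minty coordinates, feed these into Proposition~\ref{prop:detachment-entropy}, use $\int d(z,\Si)^2\,d\ga \leq W_2^2(\ga,\ga_0)$ since $\ga_0$ is supported on $\Si$, and then exponentiate together with Gaussian extremality and the identity $\si_\ga(X+Y)^2 = 2\,\mathrm{Var}(\hat\mu)$ to reach \eqref{eq:lower-bounds}. The only cosmetic difference is that the paper packages the exponentiated step as $N_{2d}(\ga)^2 \leq \tfrac{2}{d}N_d(\hat\mu)\int E\,d\ga$ and then bounds $N_d(\hat\mu)\leq \mathrm{Var}(\hat\mu)/d$, while you carry the constants through directly; the arithmetic is identical.
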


\begin{proof}
By Lemma \ref{lem:minty}, the duality gap E and the square distance $d(.,\Si)^2$ have quadratic detachment. So we can apply Proposition \ref{prop:detachment-entropy} on both to obtain
\begin{align}
    H(\ga) \geq -\frac{d}{2}\ln\left(\int E d \ga\right) + H(\hat{\mu}) + C_d  \\ H(\ga) \geq -\frac{d}{2}\ln\left(\int d(\bm{x},\Si)^2 d \ga(\bm{x})\right) + H(\hat{\mu}) + C_d.
\end{align}
The first inequality is exactly \eqref{eq:low-bound-E} and the second implies \eqref{eq:low-bound-W} because an optimal plan $\ga_0$  is concentrated on a contact set $\Si =\{\textcolor{black}{(x,y) \in \mathbb{R}^d \times \mathbb{R}^d} / E(x,y) = 0\}$ by optimality and so $W_2^2(\ga,\ga_0) \geq \int d(\bm{x},\Si)^2 d\ga(\bm{x})$. \\
The second part is a consequence and we provide only the proof for $\int E d\ga$ because the proof for $W_2^2(\ga_\ep,\ga_0)$ is stricly similar. Taking the exponential of \eqref{eq:low-bound-E}, we get
\[
N_{2d}(\ga)^2 \leq \frac{2}{d} N_d(\hat{\mu}) \int E d\ga.
\]
Since the \textcolor{black}{Gaussian} minimizes the entropy at fixed variance, the entropy power of $\hat{\mu}$ is \textcolor{black}{smaller} than the one of a \textcolor{black}{Gaussian} of same variance, that is $N_d(\hat{\mu}) \leq \frac{\text{Var}(\hat{\mu})}{d}$. Moreover $\text{Var}(\hat{\mu}) = \frac{1}{2}\text{Var}_\ga(X+Y)$ because $\hat{\mu}$ is the law of the first marginal of $\ga$ in the coordinates $(u,v) = \frac{1}{\sqrt{2}}(x+y,x-y)$. This is enough to finish the proof of \eqref{eq:lower-bounds}.
\end{proof}

We have established the bounds \eqref{eq:lower-bounds} on a general plan $\ga$. It is now time to apply these results to the entropic plan $\ga_\ep$ under hypothesis $\bm{(H2)}$ in order to obtain rates of convergence of the suboptimality $(c,\gamma_\ep) -(c,\gamma_0)$. We will also quantify the weak convergence of $\ga_\ep$ towards $\ga_0$ in $W_2$ distance.

\subsection{Rates of convergence}\label{sec:rates-quad}
In the sequel of the section, we use the set of assumptions $\bm{(H2)}$ that imply in particular that the optimizer $\ga_0$ is unique. The main result of this section states that the suboptimality $(c,\gamma_\ep) -(c,\gamma_0)$ converges to $0$ at a speed of order $\ep$. We also derive the same speed for $W^2_2(\gamma_\ep,\gamma_0)$ and this matches in both cases the rate found for \textcolor{black}{Gaussians}. 

The proofs of the Theorems require an \textcolor{black}{upper bound} on the rate of convergence of $OT_\ep$ towards $OT_0$. The key point being that the convergence rate is of the form
\begin{equation}
    \ep H(\ga_\ep) \leq OT_\ep - OT_0 \leq -\frac{d}{2}\ep \ln(\ep) + O(\ep).
\end{equation}
This result has been obtained recently for general costs under some regularity conditions \cite{carlier2022convergence}  and in the quadratic case under moments conditions \cite{nutz2022rate}. The following Lemma \textcolor{black}{makes explicit} the dependence of the bounded term $O(\ep)$ found in the latter.

\begin{lemma}\label{lem:otep-ot-constant-in-rate}
    Assume as in hypothesis $\bm{(H2)}$ that the cost is quadratic, that $\mu_0,\mu_1 \in \mathcal{P}_{2+\delta}(\mathbb{R}^d)$ for some $\delta > 0$ and that $H(\mu_i) < + \infty$.  Then, as $\ep \to 0$
    \begin{equation}\label{eq:rate-ote-oto}
        OT_\ep - OT_0 \leq -\frac{d}{2}\ep \ln(\ep) + C\ep,
    \end{equation}
    for a constant $C$ depending on d, $m_{2+\de}(\mu_i)$ and $H(\mu_i)$. 
\end{lemma}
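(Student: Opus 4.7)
The strategy is to produce an explicit competitor $\tilde\gamma_\ep \in \Pi(\mu_0,\mu_1)$ for the primal problem \eqref{eq:EOT}, so that
$$OT_\ep - OT_0 \;\leq\; \left(\int c\,d\tilde\gamma_\ep - OT_0\right) + \ep\,H(\tilde\gamma_\ep \mid \H^{2d}).$$
It then suffices to construct $\tilde\gamma_\ep$ satisfying simultaneously
$$\int c\,d\tilde\gamma_\ep - OT_0 \;\leq\; C_1 \ep \qquad \text{and} \qquad H(\tilde\gamma_\ep \mid \H^{2d}) \;\leq\; -\tfrac{d}{2}\ln \ep + C_2,$$
with $C_1,C_2$ depending only on $d$, $m_{2+\de}(\mu_i)$ and $H(\mu_i)$; summing the two estimates then yields \eqref{eq:rate-ote-oto}.

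The natural candidate, in the spirit of \cite{nutz2022rate}, is a Gaussian smoothing of an OT-optimal plan $\ga_0$: set $\bar\ga := \ga_0 * (g_\ep \otimes g_\ep)$ with $g_\ep$ the density of $\mathcal N(0,\ep I_d)$, and then correct its marginals (which equal $\mu_i^\ep := \mu_i * g_\ep$) back to $\mu_0$ and $\mu_1$ by a density readjustment or an iterated proportional fitting step. The entropy bound is the easier piece: the density of $\bar\ga$ is pointwise at most $(2\pi\ep)^{-d}$, which already gives $H(\bar\ga \mid \H^{2d}) \leq -d\ln(2\pi\ep)$, and the marginal correction contributes only a term controlled by the log-ratios $\ln(\mu_i/\mu_i^\ep)$, bounded in $L^1$ through the finite differential entropy assumption $H(\mu_i)<+\infty$.

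The cost estimate is more delicate. Convolution with $g_\ep \otimes g_\ep$ increases the quadratic cost by exactly $d\ep$ (by independence of the two added noises), but the marginal correction step, if controlled only by the crude bound $W_2^2(\mu_i, \mu_i^\ep) \leq d\ep$ together with a Cauchy--Schwarz cross-term against $\sqrt{m_2(\mu_i)}$, would only yield an $O(\sqrt{\ep})$ contribution, far too weak. The main obstacle is precisely to upgrade this to $O(\ep)$: this is where the $(2+\de)$-moment assumption enters, providing the uniform integrability of the tails of $\mu_i^\ep$ that is needed to absorb the cross-term into an $O(\ep)$ error, with a constant depending explicitly on $m_{2+\de}(\mu_i)$. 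Once the entropy and cost bounds are in place, multiplying the entropy bound by $\ep$ and adding it to the cost bound gives the claimed inequality with $C = C(d, m_{2+\de}(\mu_i), H(\mu_i))$.
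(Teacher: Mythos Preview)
Your outline has the right instinct --- build a primal competitor --- but both concrete estimates fail to deliver what the lemma actually asserts.

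\textbf{Entropy bound.} The pointwise bound $\|\bar\gamma\|_\infty \leq (2\pi\ep)^{-d}$ yields $H(\bar\gamma) \leq -d\ln(2\pi\ep)$, which after multiplying by $\ep$ produces a leading term $-d\,\ep\ln\ep$, not $-\tfrac{d}{2}\,\ep\ln\ep$. The factor of two is fatal: the exact constant $d/2$ is precisely what makes the lemma usable in Theorem~\ref{thm:rate-cost-quadratic}, where it must match the $d/2$ in the lower bound \eqref{eq:low-bound-E}. Your bound is off because it ignores the structure of $\gamma_0$: by Brenier's theorem $\gamma_0$ is concentrated on a $d$-dimensional graph in $\R^{2d}$, so convolving with an isotropic Gaussian spreads it by $O(\sqrt\ep)$ only in the $d$ directions transverse to the graph, while the tangential $d$ directions already have macroscopic extent. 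A crude $L^\infty$ bound on the density cannot see this and overcounts by $d$ dimensions.

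\textbf{Cost bound.} You correctly flag the marginal-correction cross-term as the obstruction, but ``uniform integrability of the tails of $\mu_i^\ep$'' does not explain how to beat Cauchy--Schwarz. The term $\int \langle x-y,\, S_i(x)-x\rangle\, d\bar\gamma$ is genuinely of order $\sqrt{m_2(\mu_i)}\cdot\sqrt{\ep}$ unless the displacement $S_i-\mathrm{Id}$ enjoys some orthogonality to $x-y$ under $\bar\gamma$; a moment condition alone produces no such cancellation.

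The paper's argument sidesteps both difficulties by quoting the \emph{block approximation} of \cite{nutz2022rate} rather than Gaussian convolution. That competitor is built with the exact marginals $\mu_0,\mu_1$ from the outset (no correction step, hence no cross-term), and its entropy carries exactly the factor $-\tfrac{d}{2}\ln\ep$ because the blocks sit on the $d$-dimensional Minty graph of $\gamma_0$. The constant $C$ is then identified as a quantization constant of $\gamma_0$; Minty's trick reduces this to quantizing a measure on $\R^d$ with finite $(2+\de)$-moment, for which the explicit bound of \cite{siegfriedHarald2000quantization} applies. So the $(2+\de)$-moment enters through quantization error, not through tail integrability of smoothed densities.
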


\begin{proof}
    The constant $C$ introduced in \cite[Corollary 3.14]{nutz2022rate} is (up to a numerical constant) the quantization constant of the optimal transport $\gamma_0$.
    \textcolor{black}{The quantization constant at a rate $\alpha$ of a measure $\mu$ is a constant C such that for any $n \geq 0$ there is a measure $\mu_n$ which is the mean of $n$ Dirac masses such that $W_2(\mu,\mu_n) \leq Cn^{-\alpha}$.
    Here the quantization happens with a rate $\frac{1}{d}$ and thus the quantization constant} allows for the following upper bound
    \begin{equation}
        OT_\ep - OT_0 \leq -\frac{d}{2}\ep \ln(\ep) + (H(\mu_0)+H(\mu_1))\ep + C\ep.
    \end{equation}
    However by Minty's trick \cite[Lemma 3.13]{nutz2022rate}  this quantization constant is tied to the quantization constant of a measure admitting a moment of order $2+\delta$. In that case \cite[Corollary 6.7]{siegfriedHarald2000quantization} gives an explicit bound
    \begin{equation}
        C \leq C'\left(m_{2+\delta}(\ga_0) + d\right)
    \end{equation}
    where $C'$ is independent of dimension.
    We conclude by using triangular inequality to upper bound  $m_{2+\delta}(\ga_0)$ by $2^{2+\delta}(m_{2+\delta}(\mu_0)+m_{2+\delta}(\mu_1))$. 
\end{proof}

With the inequality \eqref{eq:rate-ote-oto} giving an upper bound on $H(\ga_\ep)$ and the lower bound \eqref{eq:low-bound-E} found in the previous section, we are now able to show that $(c,\ga_\ep) - (c,\ga_0) = \Th(\ep)$:

\begin{theorem}\label{thm:rate-cost-quadratic}
    Suppose that the cost is quadratic. Let $\mu_0,\mu_1 \in \mathcal{P}_{2+\delta,ac}(\mathbb{R}^d)$ for some $\delta >0$. And further assume that their entropy is finite. Then in the limit $\ep \to 0$,
\begin{equation}
    (c,\ga_\ep) - (c,\ga_0) = \Th(\ep), \quad H(\ga_\ep) = - \frac{d}{2} \ln(\ep) + O(1) \quad \emph{and} \quad OT_\ep = OT_0 - \frac{d\ep}{2} \ln(\ep) + O(\ep).
\end{equation}
More precisely, there exist $c,C>0$, depending on $m_{2+\de}(\mu_i)$, $H(\mu_i)$ and the dimension d, such that
    \begin{equation}
        c \ep \leq \int E d \ga_\ep = (c,\ga_\ep) - (c,\ga_0) \leq C \ep.
    \end{equation}
\end{theorem}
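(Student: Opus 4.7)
Set $\de_\ep := \int E\,d\ga_\ep = (c,\ga_\ep)-(c,\ga_0)\ge 0$. The strategy is to squeeze $\de_\ep$ between two multiples of $\ep$ by combining two ingredients: the upper bound on $OT_\ep-OT_0$ furnished by Lemma \ref{lem:otep-ot-constant-in-rate}, and the lower bound on $H(\ga_\ep)$ coming from Proposition \ref{prop:mino-ent-quadratic}. Since $OT_\ep = (c,\ga_\ep)+\ep H(\ga_\ep)$ and $OT_0 = (c,\ga_0)$, Lemma \ref{lem:otep-ot-constant-in-rate} immediately gives
\begin{equation*}
\de_\ep + \ep H(\ga_\ep) \;\leq\; -\tfrac{d\ep}{2}\ln\ep + C_0\,\ep,
\end{equation*}
with $C_0$ depending only on $d$, $m_{2+\de}(\mu_i)$ and $H(\mu_i)$.

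Next I would apply the variance form of Proposition \ref{prop:mino-ent-quadratic}, namely $N_{2d}(\ga_\ep)\leq \frac{\si_{\ga_\ep}(X+Y)}{d}\sqrt{\de_\ep}$. The key uniform control is that $\si_{\ga_\ep}(X+Y) \leq \si(\mu_0)+\si(\mu_1)$ by the triangle inequality for standard deviations, and this is finite by the $m_{2+\de}$ hypothesis (independently of $\ep$ since $\ga_\ep$ has fixed marginals). Taking logarithms yields
\begin{equation*}
H(\ga_\ep) \;\geq\; -\tfrac{d}{2}\ln \de_\ep - C_1,
\end{equation*}
with $C_1$ depending only on $d$ and $m_2(\mu_i)\le m_{2+\de}(\mu_i)$.

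Substituting this into the previous display and dividing by $\ep$ reduces the problem to the scalar inequality
\begin{equation*}
t - \tfrac{d}{2}\ln t \;\leq\; C_2, \qquad t := \de_\ep/\ep,
\end{equation*}
with $C_2$ depending on $d$, $m_{2+\de}(\mu_i)$, $H(\mu_i)$. The function $t\mapsto t-\tfrac{d}{2}\ln t$ is continuous on $(0,\infty)$ and tends to $+\infty$ at both endpoints, hence is proper; the sublevel set $\{t-\tfrac{d}{2}\ln t\le C_2\}$ is therefore a compact subinterval $[c,C]\subset (0,\infty)$ with $c,C$ depending explicitly on $C_2$ and $d$. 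This delivers exactly the two-sided bound $c\ep \leq \de_\ep \leq C\ep$ claimed in the theorem.

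The remaining statements follow mechanically. Inserting $\de_\ep\le C\ep$ into the entropy lower bound yields $H(\ga_\ep)\ge -\tfrac{d}{2}\ln\ep + O(1)$; conversely, $\ep H(\ga_\ep)\le -\tfrac{d\ep}{2}\ln\ep + C_0\ep - \de_\ep \le -\tfrac{d\ep}{2}\ln\ep + C_0\ep$ gives $H(\ga_\ep)\le -\tfrac{d}{2}\ln\ep + O(1)$. Adding $\de_\ep = \Theta(\ep)$ to $\ep H(\ga_\ep) = -\tfrac{d\ep}{2}\ln\ep + O(\ep)$ produces the asymptotics of $OT_\ep$. The only conceptual obstacle is arranging the entropy lower bound without any assumption on $H(\hat\mu_\ep)$ (since $\hat\mu_\ep$ is not a fixed measure), which is precisely what the variance-based reformulation in \eqref{eq:lower-bounds} makes possible.
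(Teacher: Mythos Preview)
Your proposal is correct and follows essentially the same route as the paper: combine the upper bound from Lemma~\ref{lem:otep-ot-constant-in-rate} with the entropy lower bound from Proposition~\ref{prop:mino-ent-quadratic} (in its variance form~\eqref{eq:lower-bounds}, bounding $\si_{\ga_\ep}(X+Y)$ via the fixed marginals), reduce to the scalar inequality $t-\tfrac{d}{2}\ln t\le C$ for $t=\de_\ep/\ep$, and use the coercivity of $t\mapsto t-\tfrac{d}{2}\ln t$ at both ends of $(0,\infty)$. Your closing remark about why the variance form is needed (to avoid tracking $H(\hat\mu_\ep)$) is exactly the point the paper handles implicitly by passing through~\eqref{eq:lower-bounds}.
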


\begin{proof}
 Set $(E,\ga_\ep) := \int E d\ga_\ep$, and consider d, $H(\mu_i)$ and $m_{2+\de}(\mu_i)$ as fixed. Lemma \ref{lem:otep-ot-constant-in-rate} grants the following upper bound on the regularized problem for $\ep$ small enough
\begin{equation}\label{eq:majo-quant-rate}
    (E,\ga_\ep) + \ep H(\gamma_\ep) \leq - \frac{d}{2} \ep \ln(\ep) + C\ep.
\end{equation}
On the other hand, from Proposition \ref{prop:mino-ent-quadratic}, we know that $N_{2d}(\ga_\ep) \leq \frac{\si_{\ga_\ep}(X+Y)}{d} \sqrt{(E,\ga_\ep)}$.
But we have also $\si_{\ga_\ep}(X+Y) \leq \si_{\ga_\ep}(X) + \si_{\ga_\ep}(Y) = \sqrt{m_2(\mu_0)} + \sqrt{m_2(\mu_1)}$ because the marginals of $\ga_\ep$ are $\mu_0$ and $\mu_1$. And by Hölder inequality, $m_2(\mu) \leq m_{2+\de}(\mu)^{\frac{2}{2+\de}}$.
So $N_{2d}(\ga_\ep) \leq C' \sqrt{(E,\ga_\ep)}$ or equivalently, by taking the logarithm,
\begin{equation}\label{eq:minty-wass-1}
	H(\ga_\ep) \geq -\frac{d}{2}\ln\left(E,\ga_\ep\right) + C''.
\end{equation}
Now by combining \eqref{eq:minty-wass-1} with \eqref{eq:majo-quant-rate} we have
\begin{equation}
    \frac{(E,\ga_\ep)}{\ep}   - \frac{d}{2} \ln\left(\frac{(E,\ga_\ep)}{\ep}\right)  \leq C'''.
\end{equation}
The function $x \mapsto x- d/2 \ln(x)$ goes to $+\infty$ near $0$ and $+\infty$. Thus $\frac{(E,\ga_\ep)}{\ep} = \Th(1)$, or more precisely, there exist constants $c,C>0$, depending on d, $m_{2+\de}(\mu_i)$ and $H(\mu_i)$, such that $c\ep \leq (E,\ga_\ep) \leq C \ep$. Injecting $(E,\ga_\ep) = \Th(\ep)$ in inequalities \eqref{eq:majo-quant-rate} and \eqref{eq:minty-wass-1} gives respectively $H(\ga_\ep) \leq - \frac{d}{2} \ln(\ep) + O(1)$ and $H(\ga_\ep) \geq - \frac{d}{2} \ln(\ep) + O(1)$ which concludes the proof.
\end{proof}

Now that we have proven that the suboptimality $(c,\ga_\ep) - (c,\ga_0)$ has speed $\ep$, we want to do the same for the the Wasserstein distance $W_2^2(\ga_\ep,\ga_0)$. Inequality \eqref{eq:low-bound-W} will give a lower bound on $W_2^2(\ga_\ep,\ga_0)$, but we need also an upper bound. If the Brenier map is Lipschitz, the following Lemma provides it.\footnote{This Lemma and its proof \textcolor{black}{were} suggested to us by P. Pegon.}

\begin{lemma}\label{lem:minty-wasserstein-upper-bound}
    For the quadratic cost, suppose that the Brenier map $T=\nabla f$ is \textcolor{black}{$L$-Lipschitz}. Then
    \begin{equation}
     W^2_2(\gamma_\ep,\gamma_0)\leq \int \Vert y - T(x) \Vert^2d\gamma_\ep\leq 2L\left((c,\gamma_\ep) - (c,\gamma_0)\right).
    \end{equation}
\end{lemma}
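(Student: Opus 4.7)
The plan is to prove the two inequalities separately. For the left-hand inequality I will exhibit an explicit coupling between $\gamma_\ep$ and $\gamma_0$ in $\mathcal{P}(\R^{2d} \times \R^{2d})$. Since $\gamma_0 = (\mathrm{id},T)_\# \mu_0$ and $\gamma_\ep$ has first marginal $\mu_0$, the pushforward of $\gamma_\ep$ by $(x,y)\mapsto\bigl((x,y),(x,T(x))\bigr)$ is a coupling whose first marginal is $\gamma_\ep$ and whose second marginal is $\gamma_0$. Its squared $\R^{2d}$-transport cost is
\[
\int \|(x,y)-(x,T(x))\|^2 \,d\gamma_\ep(x,y) = \int \|y-T(x)\|^2 \,d\gamma_\ep,
\]
giving the first inequality.

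For the right-hand inequality the idea is to identify the integrand $E(x,y)$ with a Bregman divergence of $f^*$ and then invoke strong convexity. As recalled in the proof of Lemma \ref{lem:minty}, in the quadratic case $E(x,y) = f(x) + f^*(y) - \langle x,y\rangle$ where $f$ is Brenier's convex potential with $T=\nabla f$. Setting $y^\star := T(x) = \nabla f(x)$, the Fenchel--Young equality gives $f(x) + f^*(y^\star) = \langle x, y^\star\rangle$ (so $E(x,y^\star)=0$) and $x \in \partial f^*(y^\star)$. Subtracting,
\[
E(x,y) \;=\; f^*(y) - f^*(y^\star) - \langle x,\, y-y^\star\rangle
\]
is precisely the Bregman divergence $B_{f^*}(y,y^\star)$.

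Now I invoke the classical duality between $L$-smoothness and $(1/L)$-strong convexity (Baillon--Haddad): since $T=\nabla f$ is $L$-Lipschitz and $f$ convex, the conjugate $f^*$ is $(1/L)$-strongly convex. Hence
\[
E(x,y) \;=\; B_{f^*}(y,y^\star) \;\geq\; \frac{1}{2L}\|y-y^\star\|^2 \;=\; \frac{1}{2L}\|y-T(x)\|^2.
\]
Integrating against $\gamma_\ep$ and using $\int E\,d\gamma_\ep = (c,\gamma_\ep)-(c,\gamma_0)$ closes the chain. The only nontrivial step is the smoothness/strong-convexity duality, which is standard convex analysis; once $E$ is rewritten as a Bregman divergence the bound is a one-line application of strong convexity.
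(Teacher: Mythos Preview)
Your proof is correct and follows exactly the paper's two-step structure: the same explicit coupling $(x,y)\mapsto\bigl((x,y),(x,T(x))\bigr)$ for the first inequality, and the same pointwise bound $\|y-T(x)\|^2 \leq 2L\,E(x,y)$ for the second. The paper outsources that pointwise bound to Berman and Li--Nochetto (building on Gigli), whereas you give a clean self-contained derivation by recognizing $E(x,y)=f^*(y)-f^*(T(x))-\langle x,\,y-T(x)\rangle$ as the Bregman divergence of $f^*$ and invoking the $L$-smoothness/$(1/L)$-strong-convexity duality for conjugate pairs---which is precisely the content of those cited results.
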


\begin{proof}
    Let $S(x,y) = (x,T(x))$ with $T = \nabla f$ and $f$ \textcolor{black}{a Brenier convex function associated with $\gamma_0$}. Then $S$ is a transport map from $\gamma_\ep$ to $\gamma_0$ thus $W^2_2(\gamma_\ep,\gamma_0) \leq \int \Vert y- T(x) \Vert^2 d\gamma_\ep$. We now use an inequality proved by Berman in \cite{berman2020stability} and Li and Nochetto in \cite{liNochetto2020stability}, who have built upon an earlier work of Gigli \cite{gigli2011mintytrick}. \textcolor{black}{The inequality states} that whenever the Brenier map is \textcolor{black}{$L$-Lipschitz} we have $\Vert y - T(x) \Vert^2 \leq 2L E$ with  $E = c-(\ph \oplus \ps)$ the duality gap.  Combining these two inequalities grants the result, since $\int E d\ga_\ep = (c,\ga_\ep) - (c,\ga_0)$.
\end{proof}

The hypothesis that the Brenier map is Lipschitz might seem abstract. However, note that Caffarelli's regularity theory (\cite{caffarelli1992regularity},\cite{caffarelli2000fkgmonotonicity}) ensures it holds under some simple regularity conditions on the marginals.
For example as soon as the marginals have Hölder densities bounded away from zero on their supports and the latter are smooth and uniformly convex. In a different direction, recent works \cite{sinhoaram2022cafreg} ensure regularity of the transport map between log-concave measures under a variance constraint.
\begin{theorem}\label{thm:mino-wass-gen}
    Suppose that the cost is quadratic. Let $\mu_0,\mu_1 \in \mathcal{P}_{2+\delta,ac}(\mathbb{R}^d)$ for some $\delta >0$. And further assume that their entropy is finite and that the Brenier map $T=\nabla f$ is Lipschitz. Then $W_2^2(\gamma_\ep,\gamma_0) = \Theta(\ep)$. More precisely:
    \begin{enumerate}
        \item if $\mu_0,\mu_1 \in \mathcal{P}_{2+\delta,ac}(\mathbb{R}^d)$ with finite entropy, then there \textcolor{black}{exists} $c>0$, depending on the moments $m_{2+\de}(\mu_i)$, the entropies $H(\mu_i)$ and the dimension d, such that, as $\ep \to 0$
        \begin{equation}\label{eq:mino_ent_power}
        c \ep \leq W^2_2(\gamma_\ep,\gamma_0),
        \end{equation}
        \item if moreover the Brenier map is \textcolor{black}{L-Lipschitz}, then there \textcolor{black}{exists} $C>0$ depending on d, $m_{2+\de}(\mu_i)$, $H(\mu_i)$ and L, such that, as $\ep \to 0$
        \begin{equation}\label{eq:mino_ent_power2}
        c \ep \leq W^2_2(\gamma_\ep,\gamma_0) \leq C\ep.
    \end{equation}
    \end{enumerate} 
\end{theorem}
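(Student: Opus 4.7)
The theorem has two parts that can be treated independently, each combining one of the two entropy lower bounds of Proposition~\ref{prop:mino-ent-quadratic} with the sharp control $H(\gamma_\ep) = -\tfrac{d}{2}\ln(\ep) + O(1)$ obtained in Theorem~\ref{thm:rate-cost-quadratic}. The first statement follows from the Wasserstein-flavoured bound \eqref{eq:low-bound-W}--\eqref{eq:lower-bounds}; the second from Lemma~\ref{lem:minty-wasserstein-upper-bound} combined with the cost rate $(c,\gamma_\ep)-(c,\gamma_0) \leq C\ep$ of Theorem~\ref{thm:rate-cost-quadratic}. No new non-trivial idea is required; the work is to track constants.

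\textbf{Lower bound (part 1).} The plan is to apply the second inequality of \eqref{eq:lower-bounds} to $\gamma=\gamma_\ep$, giving
\begin{equation*}
    N_{2d}(\gamma_\ep) \;\leq\; \frac{\sigma_{\gamma_\ep}(X+Y)}{d}\, W_2(\gamma_\ep,\gamma_0).
\end{equation*}
I would first control $\sigma_{\gamma_\ep}(X+Y) \leq \sqrt{m_2(\mu_0)}+\sqrt{m_2(\mu_1)}$ using the fact that $\gamma_\ep$ has marginals $\mu_0,\mu_1$, and then bound $m_2(\mu_i) \leq m_{2+\delta}(\mu_i)^{2/(2+\delta)}$ by Hölder; so the prefactor is bounded by a constant $C_1$ depending only on $d$ and $m_{2+\delta}(\mu_i)$. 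Taking the logarithm and plugging in $H(\gamma_\ep) = -\tfrac{d}{2}\ln(\ep) + O(1)$ yields $-\tfrac{1}{d}H(\gamma_\ep) = \tfrac{1}{2}\ln(\ep) + O(1)$, hence $\ln W_2(\gamma_\ep,\gamma_0) \geq \tfrac{1}{2}\ln(\ep) + O(1)$, i.e. $W_2^2(\gamma_\ep,\gamma_0) \geq c\,\ep$ for a constant $c$ with the required dependence on $d$, $m_{2+\delta}(\mu_i)$ and $H(\mu_i)$.

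\textbf{Upper bound (part 2).} Under the additional Lipschitz assumption on $T=\nabla f$, Lemma~\ref{lem:minty-wasserstein-upper-bound} directly gives
\begin{equation*}
    W_2^2(\gamma_\ep,\gamma_0) \;\leq\; 2L\bigl((c,\gamma_\ep)-(c,\gamma_0)\bigr).
\end{equation*}
Then I would invoke Theorem~\ref{thm:rate-cost-quadratic}, which provides a constant $C'$ (depending on $d$, $m_{2+\delta}(\mu_i)$, $H(\mu_i)$) such that $(c,\gamma_\ep)-(c,\gamma_0) \leq C'\ep$, yielding $W_2^2(\gamma_\ep,\gamma_0) \leq 2LC'\ep =: C\ep$.

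\textbf{Main obstacle.} There is essentially no analytical obstacle here: both ingredients (the Minty-coordinate entropy lower bound and the sharp cost rate) have already been established. The only thing requiring attention is the bookkeeping of the constants, in particular confirming that the prefactor $\sigma_{\gamma_\ep}(X+Y)$ in \eqref{eq:lower-bounds} is bounded in $\ep$ purely in terms of the assumed $(2+\delta)$-moments — which the triangle inequality and Hölder handle immediately — so that the $O(1)$ absorbed into $c$ is genuinely uniform in $\ep$.
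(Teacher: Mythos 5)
Your proposal is correct and follows essentially the same route as the paper: the lower bound comes from the $W_2$ inequality of Proposition~\ref{prop:mino-ent-quadratic} with the prefactor controlled by the $(2+\delta)$-moments, combined with the entropy upper bound $H(\gamma_\ep)\leq -\tfrac{d}{2}\ln(\ep)+O(1)$ (the paper takes this directly from Lemma~\ref{lem:otep-ot-constant-in-rate}, you take it from Theorem~\ref{thm:rate-cost-quadratic}, which is equivalent), and the upper bound is exactly the paper's combination of Lemma~\ref{lem:minty-wasserstein-upper-bound} with the cost rate of Theorem~\ref{thm:rate-cost-quadratic}.
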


\begin{proof}
For the first part, \textcolor{black}{let us} recall that from Proposition \ref{prop:mino-ent-quadratic}, we have $N_{2d}(\ga_\ep) \leq \frac{\si_{\ga_\ep}(X+Y)}{d} \textcolor{black}{W_2(\ga_\ep,\ga_0)}$. As in the proof of Theorem \ref{thm:rate-cost-quadratic}, \textcolor{black}{we can bound} $\si_{\ga_\ep}(X+Y)$ with the moments $m_{2+\de}(\mu_i)$ and so we obtain
\begin{equation}\label{eq:power-W2}
    N_{2d}(\ga_\ep) \leq C(m_{2+\de}(\mu_i),d) W_2(\ga_\ep,\ga_0).
\end{equation}
But on the other hand, from Lemma \ref{lem:otep-ot-constant-in-rate}, we \textcolor{black}{know} that $H(\ga_\ep) \leq -\frac{d}{2} \ln(\ep) + C(m_{2+\de}(\mu_i),H(\mu_i),d)$. Or equivalently,
\begin{equation}\label{eq:power-ep}
    N_{2d}(\ga_\ep) \geq C(m_{2+\de}(\mu_i),H(\mu_i),d) \sqrt{\ep}.
\end{equation}
So, combining \textcolor{black}{inequalities} \eqref{eq:power-W2} and \eqref{eq:power-ep}, we get $W_2^2(\ga_\ep,\ga_0) \geq c \ep$ for some constant c depending on d, $H(\mu_i)$ and $m_{2+\de}(\mu_i)$. \\
The second part is a direct combination of Lemma \ref{lem:minty-wasserstein-upper-bound} from which  $W^2_2(\gamma_\ep,\gamma_0) \leq 2L\left((c,\gamma_\ep) - (c,\gamma_0)\right)$
and Theorem \ref{thm:rate-cost-quadratic} which states that $(c,\gamma_\ep) - (c,\gamma_0) \leq C(m_{2+\delta}(\mu_i),H(\mu_i),d) \ep$.
\end{proof}

\begin{remark}
Note that, as explained in \cite[Proposition 4.5]{carlier2022convergence}, the method of Lemma \ref{lem:minty-wasserstein-upper-bound} allows to control also the $L^2$-gap between $T_\ep$, the barycentric projection of $\ga_\ep$ \textcolor{black}{defined by $T_\ep(x) = \int y d\ga^x_\ep(y)$ where $\mu_0 \otimes \ga^x_\ep$ is the disintegration of $\ga_\ep$ with respect to the projection onto the first $d$ coordinates}, and the Brenier map $T=\nabla f$.
\[ ||T_\ep - T||_{L^2(\mu)}^2 \leq \int |y-T(x)|^2 d\ga_\ep(x,y) \leq 2L \int E d\ga_\ep \leq C \ep.
\] 
Note that, contrary to the estimate on $W_2(\ga_\ep,\ga_0)$, this estimate is not sharp for Gaussian measures where $||T_\ep - T||_{L^2(\mu)}^2$ can be computed to be of order $\ep^2$ \cite{NIPS20}. In a similar fashion we get a control over the distance between the support of $\gamma_\ep$ and the graph of the optimal transport
\begin{equation}
    \int d(y,\text{supp}(\gamma_0))^2 d\gamma_\ep \leq \int |y-T(x)|^2 d\ga_\ep(x,y) = O(\ep).
\end{equation}
\end{remark}

Hence, for the quadratic case, under technical assumptions, we \textcolor{black}{have obtained} generic rates of convergence when the marginals $\mu_i$ have finite differential entropy: 
\begin{equation}
    (c,\ga_\ep) = (c,\ga_0) + \Th(\ep), \qquad H(\ga_\ep) = - \frac{d}{2}\ln(\ep) + O(1), \qquad W_2(\ga_\ep,\ga_0) = \Th(\sqrt{\ep}).
\end{equation}
Can we generalize it to a broader class of costs? This is the objective of next section.

\section{Rates for infinitesimally twisted costs}\label{sec:general-rates}
In this section, we use the \textcolor{black}{set of assumptions} $\bm{(H3)}$ that is adapted to twisted cost functions. As in the quadratic case, we begin by estimates on the entropy based on quadratic detachment.
\subsection{Lower bound on the entropy}
\textcolor{black}{In the previous section} the key ingredient used to lower bound the entropy was the lower bound involving the variance $H(\ga) \geq H(\mathcal{N}(0,\text{Var}(\ga)/d))$. In fact we used a finer result using a disintegration of $\ga$ in two components, one of which had bounded variance. In order to extend the preceding results to more general costs our goal is to have a \emph{local} version of the Proposition \ref{prop:mino-ent-quadratic} that states a precise lower bound for $H(\ga)$. Indeed, imagine that $E \geq 0$ is a function such that there is a direction along which $E$ grows quadratically locally around $\Sigma = \{ E = 0 \}$. For any measure $\ga$, it is then possible to bound the variance of $\ga$ along that same direction which \textcolor{black}{in turn gives} a bound on the entropy of $\ga$ in a similar fashion to Lemma \ref{lem:minty}.  In order to state this idea formally we introduce the concept of \emph{local} quadratic detachment for a positive function.

\begin{definition}[Local quadratic detachment]\label{def:loc-quadratic-detachment}
    Let $X\subset \mathbb{R}^d$ be a compact set and $E : X \times X \to \mathbb{R}_+$ continuous. Assume that $\Sigma = \{E=0\} \neq \emptyset$. \textcolor{black}{We say} that $E$ has a \emph{local quadratic detachment} of parameters $((U_i)_i,(\Ph_i)_i,\kappa)$ where $\kappa> 0$, $(U_i)_i$ is a finite open covering  of $\Sigma$, and $(\Ph_i)_i$ a family of volume preserving affine functions $\Ph_i: \bm{x}\mapsto (u,v)\in \mathbb{R}^d \times \mathbb{R}^d$  such that for all $(u,v),(u,v') \in \Ph_i(U_i)$ 
    \begin{equation}
        E (u,v) +E (u,v') \geq \kappa\Vert v-v' \Vert^2
    \end{equation}
    \textcolor{black}{where} $E$ denotes $E \circ \Ph_i^{-1}$ by a slight abuse of notation.
\end{definition}
This local quadratic detachment property is a direct generalisation of the notion of quadratic detachment (definition \ref{def:quadratic-detachment}). And it happens that the same result holds: the entropy of $\ga$ is lower bounded by the log of the integral of a function having a local quadratic detachment. The idea of proof is the same as in the quadratic case with some \textcolor{black}{additional} technical issues associated to the locality of the quadratic detachment.
\begin{proposition}\label{prop:lower-bound-ent-local-detachment}
    Let $X\subset \mathbb{R}^d$ be a compact set and $E$ be a continuous function on $X\times X$ that has a local quadratic detachment with parameters $((U_i)_{i=1}^N,(\Ph_i)_{i=1}^N,\kappa)$.  Set $R =X \times X \setminus \bigcup_i U_i$. There exists a constant $C$ depending on $X,\Vert \Ph_i \Vert_{op}$ and $N$ such that for any $\ga \in \mathcal{P}(X\times X)$ we have 
    \begin{equation}
        H(\ga) \geq -\frac{d}{2}\ln\left(\int E(\bm{x}) d\ga\right) - \frac{d}{2}\ln\left(\frac{4\pi e}{\kappa d}\right) + \frac{d}{2}\ln\left(1 \wedge \inf_{\bm{x}\in R}E(\bm{x})\right) + C.
    \end{equation}
\end{proposition}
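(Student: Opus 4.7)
The plan is to extend proposition \ref{prop:detachment-entropy} to the local setting by cutting $X \times X$ along the cover $(U_i)$, bounding the entropy of each chart piece as in the global quadratic case, and paying a controlled price on the remainder $R$. Concretely, set $A_1 = U_1 \cap (X \times X)$, $A_i = (U_i \setminus \bigcup_{j<i} U_j) \cap (X \times X)$ for $2 \le i \le N$, $A_0 = R$, and write $\alpha_i = \gamma(A_i)$, $\gamma_i = \alpha_i^{-1}\gamma|_{A_i}$ whenever $\alpha_i > 0$. Since the $A_i$ are disjoint, a direct density computation gives the additivity
\[
H(\gamma) = \sum_{i:\alpha_i>0} \alpha_i H(\gamma_i) + \sum_{i:\alpha_i>0} \alpha_i \ln\alpha_i,
\]
and the Shannon mixing term is bounded below by $-\ln(N+1)$.

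For each $i \ge 1$ I mimic the proof of proposition \ref{prop:detachment-entropy}. Pushing $\gamma_i$ through the volume-preserving affine $\Phi_i$ leaves the differential entropy invariant and places the image $\hat\gamma_i$ inside $\Phi_i(U_i)$, where the symmetric detachment $E(u,v) + E(u,v') \ge \kappa\|v - v'\|^2$ applies. Disintegrating $\hat\gamma_i = \hat\mu_i \otimes \hat\gamma_i^u$ along the projection onto the first $d$ coordinates and integrating this inequality against $d\hat\gamma_i^u(v)\,d\hat\gamma_i^u(v')$ yields the variance control $\text{Var}(\hat\gamma_i^u) \le \kappa^{-1}\int E\, d\hat\gamma_i^u$. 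The Gaussian minimizer of entropy at fixed variance (in this sign convention), followed by Jensen's inequality applied to $\ln$, then produces
\[
H(\gamma_i) \ge H(\hat\mu_i) - \frac{d}{2}\ln\!\Bigl(\int E\, d\gamma_i\Bigr) - \frac{d}{2}\ln\frac{4\pi e}{\kappa d}.
\]
Here $H(\hat\mu_i)$ is bounded below by a constant depending only on $X$ and $\|\Phi_i\|_{op}$: in this convention the uniform distribution minimizes entropy on a compact support, and the support of $\hat\mu_i$ lies in the first-$d$-coordinate projection of $\Phi_i(X \times X)$. For $i = 0$, the same uniform argument gives $H(\gamma_0) \ge -\ln\H^{2d}(X \times X)$.

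Assembling the pieces via $\int E\, d\gamma_i = \alpha_i^{-1}\!\int_{A_i} E\, d\gamma \le \alpha_i^{-1}\!\int E\, d\gamma$, the monotonicity of $\ln$, and collecting all $\alpha_i\ln\alpha_i$ contributions, I arrive at a bound of the form
\[
H(\gamma) \ge -\frac{d}{2}(1 - \alpha_0)\ln\!\Bigl(\int E\, d\gamma\Bigr) - \frac{d}{2}\ln\frac{4\pi e}{\kappa d} + C_0,
\]
with $C_0$ depending only on $X$, $(\|\Phi_i\|_{op})_i$, and $N$. The main obstacle, and precisely where the correction $\tfrac{d}{2}\ln(1 \wedge \inf_R E)$ materializes, is the removal of the unwanted $(1 - \alpha_0)$ factor. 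For this I use the trivial estimate $\alpha_0 \inf_R E \le \int_R E\, d\gamma \le \int E\, d\gamma$, i.e.\ $\alpha_0 \le \int E\, d\gamma / \inf_R E$. A short case analysis on the sizes of $X := \int E\, d\gamma$ and $\inf_R E$---trivial when $X \ge 1$; when $X < 1$, the substitution $r = X/(1 \wedge \inf_R E)$ reduces the question to lower-bounding $r \ln r + (r-1)\ln(1 \wedge \inf_R E)$, both terms handled by $x \ln x \ge -1/e$ together with the signs of the factors---yields
\[
-\tfrac{d}{2}(1 - \alpha_0)\ln X \ge -\tfrac{d}{2}\ln X + \tfrac{d}{2}\ln(1 \wedge \inf_R E) - \tfrac{d}{2e},
\]
and absorbing the numerical $d/(2e)$ into $C$ completes the proof.
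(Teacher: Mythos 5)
Your proof is correct and follows essentially the same strategy as the paper's: decompose $\ga$ along the cover, run the quadratic-detachment $\to$ variance $\to$ Gaussian-entropy $\to$ Jensen chain in each chart, bound the chart marginals and the remainder by the uniform-distribution estimate, and repair the $(1-\alpha_0)$ prefactor using $\alpha_0 \inf_R E \le \int E\, d\ga$. The only differences are cosmetic (disjointified sets in place of a partition of unity, the symmetric double integration of the detachment inequality in place of evaluating at a minimizing $v'$, and a slightly different elementary inequality in the final correction), so no further comparison is needed.
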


\begin{proof}
    First if $H(\ga) = + \infty$ or $\int E d\ga = + \infty$, there is nothing to prove. We assume that $H(\ga) < + \infty$ thus $\ga$ has a density that we denote $\rho$ and $\int E d\ga < + \infty$. Let $\kappa>0$, $(U_i)_{i=1}^N$ and $\Ph_i$ be as in the definition \ref{def:loc-quadratic-detachment} of the local quadratic detachment. Let $\zeta_i$ be a partition of unity subordinate to the family $(U_i)_i$. Denote by $U = \bigcup_i U_i$. If $p_i = (\zeta_i\rho)(U) > 0$ set $\rho_i = \frac{1}{p_i} \zeta_i\rho$, else $\rho_i = 0$. Then we have the following decomposition $\rho = 1_U \sum_i p_i \rho_i + 1_{U^c} \rho$ which we can inject in the definition of the entropy
    \begin{equation}\label{eq:entropy-gamma-detachment}
    \begin{split}
        H(\ga) &= \int \rho \ln(\rho)\\
        &= \int_U \sum_i p_i \rho_i \ln(\sum_j p_j \rho_j) + \int_{U^c}\rho \ln(\rho)\\
        &= \sum_i p_i \int \rho_i \ln(\sum_j p_j \rho_j) + \int_{U^c} \rho \ln(\rho)\\
        &\geq \sum_i p_i \int \rho_i \ln(p_i \rho_i) + \int_{U^c} \rho \ln(\rho)\\
        &\geq \sum_i p_i \ln(p_i) + \sum_i p_i \int \rho_i \ln(\rho_i) + \int_{U^c} \rho \ln(\rho)\\
        &\geq \sum_i p_iH(\rho_i)- \frac{1}{e}\left(N + \mathcal{H}^{2d}(X\times X)\right)
    \end{split}
    \end{equation}
    \textcolor{black}{where} the first inequality holds because $p_i \rho_i \leq \sum_j p_j \rho_j$ and the last inequality holds because under support constraint the uniform distribution has the smallest entropy. Now for $i$ such that $\rho_i \neq 0$ set $\tau_i = (\Ph_i)_\# \rho_i$, since $\Ph_i$ is volume preserving we have $H(\tau_i) = H(\rho_i)$. We now disintegrate $\tau_i$ with respect to the projection onto the first coordinate $u$ and denote the disintegration $\tau_i = \mu_i \otimes \tau^u_i$. \textcolor{black}{Following the same method as in equations \eqref{eq:additivity}, the additivity of the entropy gives}
    \begin{equation}\label{eq:additivity-proof-detachment}
        H(\tau_i) = H(\mu_i) + \int H(\tau^u_i) d\mu_i.
    \end{equation}
    However let $u$ be in the support of $\mu_i$ and let $v$ be within the support of $\tau^u_i$. Take $v'\in \pi_2 \Ph_i(X\times X)$ such that $E(u,v') = \min_{\pi_1\Ph_i(X\times X)} E(u,.)$, which is possible by compactness of $\pi_2\Ph_i(X\times X)$ and continuity of $E$. Then by the local quadratic detachment of $E$ we have
    \begin{equation}
        E(u,v) \geq \frac{1}{2}(E(u,v')+E(u,v)) \geq \frac{\kappa}{2} \Vert v'-v\Vert^2.
    \end{equation}
    Moreover since the variance is the distance to the constants in $L^2$ space we have
    \begin{equation}
        \int E(u,v) d\tau^u_i \geq \frac{\kappa}{2} \int\Vert v'-v\Vert^2 d\tau^u_i \geq \frac{\kappa}{2} \text{Var}(\tau^u_i).
    \end{equation}
    Using this control of the variance in \textcolor{black}{conjunction} with the fact that the \textcolor{black}{Gaussian} \textcolor{black}{has the lowest} entropy under variance constraint we have
    \begin{equation}\label{eq:lower-bound-entropy-tau}
    \begin{split}
        H(\tau^u_i) &\geq - \frac{d}{2}\ln\left(\text{Var}(\tau^u_i)\right) - \frac{d}{2}\ln\left(\frac{2\pi e}{d}\right)\\
        &\geq - \frac{d}{2}\ln\left(\int E(u,v) d\tau^u_i\right) - \frac{d}{2}\ln\left(\frac{4\pi e}{\kappa d}\right).
    \end{split}
    \end{equation}
    On the other hand we know that $\mu_i$ is supported in $\pi_1\Ph_i(U_i)$ the diameter of which is smaller than a constant time that of $X$, thanks to \textcolor{black}{$\Ph_i$ being affine with bounded derivative}. Thus since on a bounded support the uniform distribution has the smallest entropy we have $H(\mu_i) \geq -\ln( \mathcal{H}^d(\pi_1\Ph_i(U_i))) = C(\Vert \Ph_i \Vert_{op},X)$.
    Now coming  back to equation \eqref{eq:additivity-proof-detachment} we have
    \begin{equation}\label{eq:mino-ent-chgt-var-detach}
    \begin{split}
        H(\rho_i) = H(\tau_i) &= H(\mu_i) + \int H(\tau^u_i) d\mu_i \\
        &\geq- \frac{d}{2}\int \ln\left(\int E(u,v) d\tau^u_i\right) d\mu_i- \frac{d}{2}\ln\left(\frac{4\pi e}{\kappa d}\right) + C(\Vert \Ph_i \Vert_{op},X)\\
        &\geq - \frac{d}{2}\ln\left(\int E(u,v) d\tau_i\right)- \frac{d}{2}\ln\left(\frac{4\pi e}{\kappa d}\right) + C(\Vert \Ph_i \Vert_{op},X) \\
        &= - \frac{d}{2}\ln\left(\int E(\bm{x}) d\rho_i\right)- \frac{d}{2}\ln\left(\frac{4\pi e}{\kappa d}\right) + C(\Vert \Ph_i \Vert_{op},X),
    \end{split}
    \end{equation}
    where the second to last inequality holds by concavity of the logarithm and the last inequality holds by \textcolor{black}{defintion of the pushforward}. We now multiply equation \eqref{eq:mino-ent-chgt-var-detach} by $p_i$ and sum over $i$ in order to combine it with the lower bound term in equation \eqref{eq:entropy-gamma-detachment}
    \begin{equation}
    \begin{split}
        H(\ga) &\geq \sum_i p_iH(\rho_i)- \frac{1}{e}\left(N + \mathcal{H}^{2d}(X\times X)\right)\\
        &\geq \sum_i p_i \left(- \frac{d}{2}\ln\left(\int E(\bm{x}) d\rho_i\right)- \frac{d}{2}\ln\left(\frac{4\pi e}{\kappa d}\right) + C(\Vert \Ph_i \Vert_{op},X) \right) - \frac{1}{e}\left(N + \mathcal{H}^{2d}(X\times X)\right)\\
        &\geq -\frac{d}{2}\rho(U)\ln\left(\int_U E(\bm{x}) d\rho\right) - \frac{d}{2}\ln\left(\frac{4\pi e}{\kappa d}\right)   + C(N,\Vert \Ph_i \Vert_{op},X),
    \end{split}
    \end{equation}
    where we used the concavity of the logarithm one last time. It remains to prove that there is a constant such that $-\rho(U)\ln\left(\int_U E d\rho\right) \geq -\ln\left(\int Ed\rho\right) + C$. \\ Set $E_0 = 1 \wedge \inf_{\bm{x}\in \textcolor{black}{R}} E(\bm{x}) >0$ because \textcolor{black}{$\Sigma \cap U^c = \emptyset$, $X \times X \setminus U$ is a compact set} and $E$ is continuous. Then $\int_{U^c} Ed\rho \geq E_0 (1-\rho(U))$ which implies $-\ln\left(\int Ed\rho\right) \leq -\ln\left(\int_U Ed\rho + E_0 (1-\rho(U))\right)$. For $\alpha \in [0,1]$ and $x > 0$ set $f:x \mapsto -\alpha \ln(x) + \ln(x + E_0(1-\alpha))$. We have $f(x) \geq f(\alpha E_0) = -\alpha \ln(\alpha) + (1-\alpha) \ln(E_0) \geq \ln(E_0)$. This allows us to conclude that
    \begin{equation}
        H(\ga) \geq -\frac{d}{2}\ln\left(\int E(\bm{x}) d\rho\right) - \frac{d}{2}\ln\left(\frac{4\pi e}{\kappa d}\right)  + \frac{d}{2}\ln(E_0) + C(N, \Vert \Ph_i \Vert_{op},X).
    \end{equation}
\end{proof}

\subsection{Rates of convergence of $OT_\ep$}
The quadratic case showed that the duality gap function $E = c - (\ph\oplus\ps)$ has a global quadratic detachment, where $\ph,\ps$ are any Kantorovich potentials. In fact the duality gap still has a quadratic detachment, though local, in the more general case of infinitesimally twisted costs, which includes situations where the optimal plans are not necessarily given by a map. This is the point of \cite[Lemma 4.2]{carlier2022convergence}, which relies on a generalized version of Minty's trick also stating that in charts the support of $\ga_0$ is the graph of a Lipschitz function. They also provide a quadratic lower bound on the duality gap in these local charts. Our goal is to then apply Proposition \ref{prop:lower-bound-ent-local-detachment} in order to control the entropy of the optimal regularized transport plans as $\ep \to0$. We will then be able to conclude by finding a rate of order $\ep$ for $(c,\ga_\ep)-(c,\ga_0)$ in a similar fashion to the quadratic case.
\\Throughout this section we will work \textcolor{black}{on $X$, a compact subset  of $\mathbb{R}^d$.} We denote by $\Omega$ an open set containing $X$. We will assume that the marginals $\mu_0,\mu_1$ are supported in $X$ and have finite entropy. We start by recalling the results found in \cite{carlier2022convergence}. 

\begin{definition}[Infinitesimal twist]\label{def:twist}
    Given $c \in \mathcal{C}^2(\Omega^2)$ we say that $c$ is infinitesimally twisted if $\nabla^2_{xy}c(x,y) = (\partial^2_{x_iy_j}c(x,y))_{i,j} \in M_d(\mathbb{R})$ is invertible for every $(x,y) \in \Omega^2$.
\end{definition}
We now recall the quadratic detachment Lemma \cite[Lemma 4.2]{carlier2022convergence}. \textcolor{black}{Its proof} closely \textcolor{black}{follows} an earlier proof found in \cite{mccann2012rectifiability}, however \textcolor{black}{the authors} are interested in points that do not belong to the support of the optimal transport plan. 
\begin{lemma}\label{lem:mtw-minty-trick}
    Let $c \in \mathcal{C}^2(\Omega^2)$ be an infinitesimally twisted cost, and $(\ph,\ps) \in \mathcal{C}(X)^2$ be a pair of c-conjugate functions. We denote by $E = c - \ph \oplus \ps \geq 0$ the duality gap function defined on $X\times X$, by $\Sigma = \{ E= 0\}$ the contact set and for every $r >0$,
    \begin{equation}
        \tau(r) = \sup_{\substack{\bm{x},\bm{x'} \in X \times X\\ \Vert \bm{x'}-\bm{x}\Vert \leq r} } \Vert \nabla^2_{xy}c(\bm{x'})^{-1} \nabla^2_{xy}c(\bm{x}) - Id \Vert \in  [0, \infty).
    \end{equation}
    If $\bm{\bar{x}} \in X \times X$ and $\bm{x},\bm{x'} \in B_r(\bm{\bar{x}})\cap (X\times X)$, then
    \begin{equation}
        E(\bm{x})+E(\bm{x'}) \geq \Vert \Delta v \Vert^2 - \Vert\Delta u\Vert^2 - \tau(r) (\Vert \Delta v \Vert^2+\Vert \Delta u \Vert^2)
    \end{equation}
    where $\Delta u = u(\bm{x'}) - u(\bm{x}), \Delta v = v(\bm{x'}) - v(\bm{x})$, and
    \begin{equation}
        u(\bm{x}) = \frac{1}{2}(x-\nabla^2_{xy}c(\bm{\bar{x}}) y), \quad v(\bm{x}) = \frac{1}{2}(x+\nabla^2_{xy}c(\bm{\bar{x}}) y)
    \end{equation}
    for every $\bm{x} =(x,y)$.
\end{lemma}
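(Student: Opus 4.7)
The starting point will be to exploit the global nonnegativity of $E$ at the \emph{swapped} pairs. Because $E(x,y')$ and $E(x',y)$ are both $\ge 0$ and the separable part $\ph\oplus\ps$ drops out of the alternating sum, I obtain
\[E(\bm{x})+E(\bm{x'})\;\ge\;E(\bm{x})+E(\bm{x'})-E(x,y')-E(x',y)\;=\;c(x,y)+c(x',y')-c(x,y')-c(x',y),\]
so the whole lemma reduces to a lower bound on the mixed second difference of $c$.

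Next I would write this mixed difference as a double integral. Setting $h(s,t):=c(x'+s\Delta x,y'+t\Delta y)$ with $\Delta x:=x-x'$, $\Delta y:=y-y'$, and $\bm{z}_{s,t}:=(x'+s\Delta x,y'+t\Delta y)$, the two-dimensional fundamental theorem of calculus gives
\[c(x,y)+c(x',y')-c(x,y')-c(x',y)\;=\;\int_0^1\!\!\int_0^1 \big\langle \Delta x,\,\nabla^2_{xy}c(\bm{z}_{s,t})\,\Delta y\big\rangle\,ds\,dt.\]
Since each coordinate of $\bm{z}_{s,t}$ is a convex combination of the corresponding coordinates of $\bm{x}$ and $\bm{x'}$, $\bm{z}_{s,t}$ stays close to $\bm{\bar{x}}$ at the scale $r$ whenever $\bm{x},\bm{x'}\in B_r(\bm{\bar{x}})$, up to a harmless universal factor that gets absorbed into the monotone modulus $\tau$.

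The plan is then to linearize the Hessian at $\bm{\bar{x}}$: writing $\nabla^2_{xy}c(\bm{z}_{s,t})=A+(\nabla^2_{xy}c(\bm{z}_{s,t})-A)$ with $A:=\nabla^2_{xy}c(\bm{\bar{x}})$ splits the bilinear form into a principal term and a residual. For the principal term, the defining relations of the coordinates give $\Delta x=\Delta u+\Delta v$ and $A\Delta y=\Delta v-\Delta u$, hence by direct expansion
\[\langle \Delta x,\,A\Delta y\rangle\;=\;\langle\Delta u+\Delta v,\,\Delta v-\Delta u\rangle\;=\;\|\Delta v\|^2-\|\Delta u\|^2,\]
which matches the advertised main term. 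For the residual $\langle\Delta x,(\nabla^2_{xy}c(\bm{z}_{s,t})-A)\Delta y\rangle$, I factor $\nabla^2_{xy}c(\bm{z}_{s,t})-A=A\,(A^{-1}\nabla^2_{xy}c(\bm{z}_{s,t})-Id)$, use the hypothesis $\|A^{-1}\nabla^2_{xy}c(\bm{z}_{s,t})-Id\|\le\tau(r)$, and combine Cauchy--Schwarz with the parallelogram identity $\|\Delta x\|^2+\|A\Delta y\|^2=2(\|\Delta u\|^2+\|\Delta v\|^2)$ and AM--GM to recast the error in the form $\tau(r)(\|\Delta u\|^2+\|\Delta v\|^2)$.

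The main obstacle I expect is the residual estimate: making the coefficient come out to exactly $\tau(r)$ and not a condition-number-dilated multiple of it. This is precisely why the Minty-type coordinates in the lemma are tailored to the specific matrix $A$: parametrizing via the pair $(\Delta x,A\Delta y)$, rather than the raw $(\Delta x,\Delta y)$, is what places the quadratic form $\|\Delta x\|^2+\|A\Delta y\|^2$ (equivalently $\|\Delta u\|^2+\|\Delta v\|^2$) intrinsically into the estimate, so that the scalar $\tau(r)$ controls the residual without stray factors. Once the pointwise estimate is in hand for every $(s,t)\in[0,1]^2$, integration over the unit square and reassembly with the initial $\ph\oplus\ps$-cancellation inequality close the proof.
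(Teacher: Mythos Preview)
The paper does not give its own proof of this lemma; it is quoted verbatim from \cite[Lemma 4.2]{carlier2022convergence} (whose argument in turn follows \cite{mccann2012rectifiability}). Your approach --- drop the nonnegative terms $E(x,y')$ and $E(x',y)$ so that the $\ph\oplus\ps$ part cancels and one is left with the mixed second difference of $c$, represent that difference as a double integral of $\langle\Delta x,\nabla^2_{xy}c(\bm z_{s,t})\,\Delta y\rangle$, freeze the Hessian at $\bm{\bar x}$, and control the remainder via $\tau$ --- is precisely the argument in that reference, so the strategy is correct and matches the source the paper cites.

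Two technical details are worth flagging, since you yourself raise the issue of getting the exact constant $\tau(r)$. First, the interpolated point $\bm z_{s,t}=((1-s)x'+sx,(1-t)y'+ty)$ is in general only guaranteed to satisfy $\|\bm z_{s,t}-\bm{\bar x}\|\le\sqrt{2}\,r$ (take $s=1$, $t=0$), and need not lie in $X\times X$ when $X$ is not convex; your remark about a ``harmless universal factor absorbed into $\tau$'' is honest but does not literally deliver the stated $\tau(r)$. Second, and more to the point of your ``main obstacle'': to obtain $|\langle\Delta x,(B-A)\Delta y\rangle|\le\tau(r)\,\|\Delta x\|\,\|A\Delta y\|$ you need $\|BA^{-1}-I\|\le\tau(r)$, whereas the definition of $\tau$ bounds $\|A^{-1}B-I\|$ (and, by symmetry in the sup, $\|B^{-1}A-I\|$). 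These differ by conjugation by $A$, so a condition number of $A$ could appear. Neither point matters for the applications in the paper --- only $\tau(r)\to 0$ is used, and $r$ is chosen so that the relevant modulus is at most $\tfrac12$ --- but if you want the statement with the \emph{exact} coefficient $\tau(r)$ you should consult the formulation in the cited reference rather than rely on the sketch.
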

\begin{remark}\label{rem:mtw-lipschitz-graph}
    The following remark was made in \cite{carlier2022convergence}. What directly follows from \textcolor{black}{the above lemma} is that locally, up to a change of  variable, the support of $\ga_0$ lies within the graph of a Lipschitz function, with Lipschitz constant arbitrarily close to $1$. Indeed for $(u,v),(u',v') \in \Ph(B(\bm{\bar{x}},r))\cap \Ph(\text{supp}\ga_0)$ we have
    \begin{equation}
        0= E(u,v) + E(u',v') \geq (1-\tau(r))\Vert \Delta v \Vert^2 - (1+\tau(r)) \Vert \Delta u \Vert^2.
    \end{equation}
    Thus by rearranging the terms in the inequality we have
    \begin{equation}
       \sqrt{\frac{1+\tau(r)}{1-\tau(r)}}  \Vert \Delta u \Vert \geq \Vert \Delta v \Vert.
    \end{equation}
\end{remark}
This Lemma essentially says that locally around the set $\Sigma = \{E = 0\}$ the function $E$ grows at least quadratically along the direction $\text{Im}(Id+\nabla^2_{xy}c(\bm{\bar{x}}))$. In particular when the functions $(\ph,\ps)$ are Kantorovich potentials of the optimal transport problem, this gives a quadratic lower bound of the duality gap close to $\Sigma$. The following Lemma restates Lemma \ref{lem:mtw-minty-trick} in the language of the quadratic detachment of the duality gap function. 
\begin{lemma}\label{lem:quad-detach-energy-gap}
    \textcolor{black}{Using the notations of Lemma \ref{lem:mtw-minty-trick}
    let} $(\ph,\ps) \in \mathcal{C}(X)^2$ be a pair of c-conjugate functions. Then $E=c - (\ph\oplus\ps)$ has a local quadratic detachment of parameters $(B(\bm{x}_i,r))_i,(\Ph_i)_i,\ka)$  (where $r$ is such that $\tau(r) \leq \frac{1}{2}$, and \textcolor{black}{$\ka =\frac{1}{2^{3-1/d}}\inf_{\bm{x}\in X^2} \vert \det(\nabla^2_{xy}c(\bm{x}))\vert^{1/d})$}. The \textcolor{black}{family} $(B(\bm{x}_i,r))_i$ is a finite covering of $\Sigma$, and $\Ph_i(\bm{x}) = \alpha_i(u_i(\bm{x})-u_i(\bm{x}_i),v_i(\bm{x})-v_i(\bm{x}_i))$ with $\alpha_i$ chosen such that \textcolor{black}{$\det(D\Ph_i) = 1$} and 
    \begin{equation}
        u_i(\bm{x}) = \frac{1}{2}(x - \nabla^2_{xy}c(\bm{x}_i) y), \quad v_i(\bm{x}) = \frac{1}{2}(x + \nabla^2_{xy}c(\bm{x}_i) y).
    \end{equation}
\end{lemma}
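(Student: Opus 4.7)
The plan is to reduce the claim to a pointwise application of lemma \ref{lem:mtw-minty-trick} and then globalise over a finite covering by compactness. First I will invoke the uniform continuity of $\nabla^2_{xy}c$ on the compact set $X\times X$, which ensures that $\tau(r)\to 0$ as $r\to 0$; pick once and for all an $r>0$ such that $\tau(r)\leq 1/2$. For every base point $\bm{x}_i\in \Sigma$, lemma \ref{lem:mtw-minty-trick} gives, for all $\bm{x},\bm{x'}\in B(\bm{x}_i,r)\cap (X\times X)$, the bound
\begin{equation*}
    E(\bm{x})+E(\bm{x'})\geq (1-\tau(r))\Vert \Delta v\Vert^2 - (1+\tau(r))\Vert \Delta u\Vert^2,
\end{equation*}
where $u_i,v_i$ are the change of variables of lemma \ref{lem:mtw-minty-trick} associated to $\bm{x}_i$. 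Restricting to pairs with $\Delta u=0$ yields $E(\bm{x})+E(\bm{x'})\geq \tfrac{1}{2}\Vert \Delta v\Vert^2$, which is exactly a quadratic detachment in the unscaled Minty coordinates.

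Next I will compute the Jacobian of $\bm{x}\mapsto (u_i(\bm{x}),v_i(\bm{x}))$ by a block determinant argument: writing $A_i:=\nabla^2_{xy}c(\bm{x}_i)$, the linear part has matrix $\frac{1}{2}\begin{pmatrix} I & -A_i\\ I & A_i\end{pmatrix}$, whose determinant equals $|\det A_i|/2^d$. Consequently the scalar $\alpha_i=\sqrt{2}\,|\det A_i|^{-1/(2d)}$ makes the rescaled affine map $\Phi_i$ volume-preserving, and one has $\Delta v_{\mathrm{new}}=\alpha_i\Delta v$. Re-expressing the detachment inequality in the new coordinates gives
\begin{equation*}
    E(u,v)+E(u,v')\geq \frac{1-\tau(r)}{\alpha_i^2}\Vert v-v'\Vert^2 = \frac{(1-\tau(r))\,|\det A_i|^{1/d}}{2}\Vert v-v'\Vert^2.
\end{equation*}
Using $1-\tau(r)\geq 1/2$ and replacing $|\det A_i|^{1/d}$ by $\inf_{\bm{x}\in X^2}|\det \nabla^2_{xy}c(\bm{x})|^{1/d}$ (which is strictly positive because $c$ is infinitesimally twisted and $X^2$ is compact) produces the claimed uniform constant $\kappa=\tfrac{1}{4}\inf_{\bm{x}\in X^2}|\det\nabla^2_{xy}c(\bm{x})|^{1/d}$.

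The remaining step is to cover $\Sigma$: since $E$ is continuous and $X^2$ is compact, $\Sigma=\{E=0\}$ is compact, so the open cover $\{B(\bm{x},r)\}_{\bm{x}\in\Sigma}$ admits a finite subcover $(B(\bm{x}_i,r))_{i=1}^N$, which yields the triple $((U_i)_i,(\Phi_i)_i,\kappa)$ demanded by definition \ref{def:loc-quadratic-detachment}. The only real technical point is tracking how the isotropic rescaling $\alpha_i$ interacts with the quadratic lower bound on $\Vert \Delta v\Vert^2$ so that the final detachment constant becomes independent of $i$; everything else is a direct translation of the pointwise inequalities of lemma \ref{lem:mtw-minty-trick} into the language of definition \ref{def:loc-quadratic-detachment}.
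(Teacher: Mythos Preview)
Your proof is correct and follows essentially the same route as the paper: choose $r$ with $\tau(r)\le 1/2$ by continuity of $\nabla^2_{xy}c$ on the compact $X^2$, extract a finite cover of the compact set $\Sigma$ by balls $B(\bm{x}_i,r)$, apply lemma \ref{lem:mtw-minty-trick} on each ball with $\Delta u=0$, and then rescale by $\alpha_i$ to make $\Phi_i$ volume-preserving and obtain the uniform constant $\kappa$. Your Jacobian computation $\alpha_i^{2d}=2^d/|\det A_i|$ and the resulting bound $\tfrac{1-\tau(r)}{\alpha_i^2}=\tfrac{(1-\tau(r))|\det A_i|^{1/d}}{2}\ge \tfrac{1}{4}\inf_{X^2}|\det\nabla^2_{xy}c|^{1/d}$ match the paper's exactly.
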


\begin{proof}
    We use the notations of Lemma \ref{lem:mtw-minty-trick}. Let $r > 0$ be such that $\tau(r) \leq 1/2$ which is possible by continuity of $\nabla_{xy}^2c$ over the compact set $X^2$. We have $\tau(r) \to 0$ as $r \to 0$. Note that $E$ is continuous because $\ph,\ps$ are $c$-conjugate functions and $c$ is continuous. Thus $\Sigma$ is closed, hence compact. We can choose $(\bm{x}_i)_{i=1}^N \in \Sigma^N$ such that $(B(\bm{x}_i,r))_i$ is a finite covering of $\Sigma$. For \textcolor{black}{$i \in \{1,\ldots,N\}$} and for every $\bm{x},\bm{x'} \in B(\bm{x}_i,r)$ such that $u_i(\bm{x})=u_i(\bm{x'})$, we have by Lemma \ref{lem:mtw-minty-trick}
    \begin{equation}
        E(\bm{x}) + E(\bm{x'}) \geq (1-\tau(r))\Vert \Delta v_i  \Vert^2 \geq \frac{1}{2\alpha_i^2}\Vert \Delta (\alpha_i v_i) \Vert^2
    \end{equation}
    where $v_i(\bm{x})=1/2(x+\nabla^2_{xy}c(\bm{x}_i)y)$. Note that $\Ph_i$ is an affine volume preserving map thus we have $1 = \frac{\alpha_i^{2d}}{2^d} \vert \det(\nabla_{xy}^2c(\bm{x}_i))\vert$. The determinant and the cross derivative are continuous which implies $I = \inf_{\bm{x}\in X^2} \vert \det(\nabla^2_{xy}c(\bm{x})) \vert> 0$ by the infinitesimal twist condition on $c$. We conclude that
    \begin{equation}
        E(\bm{x}) + E(\bm{x'}) \geq \frac{1}{2\alpha_i^2}\Vert \Delta (\alpha_i v_i) \Vert^2\geq  \textcolor{black}{\frac{I^{1/d}}{2^{3-1/d}}}\Vert \Delta (\alpha_i v_i) \Vert^2.
    \end{equation}
\end{proof}
We will use this Lemma in the specific case \textcolor{black}{of a} duality gap function where $\ph,\ps$ are Kantorovich potentials for the optimal transport problem. Note that there is no \textcolor{black}{dependence on} $\ep$ and thus the quadratic detachment framework can be used to derive the rates found in the quadratic cost case. 
 It is known (see \cite{nutz2022rate,carlier2022convergence}) that for infinitesimally twisted cost and compactly supported marginals of finite entropy, the regularized problem \textcolor{black}{\eqref{eq:EOT}} satisfies the following inequality for some real $C$
\begin{equation}\label{eq:gen-rate-eot}
    OT_\ep - OT_0 \leq - \frac{d}{2}\ep\ln(\ep) + C \ep.
\end{equation}
 We formally recall that convergence result in the following Lemma.

\begin{lemma}\label{lem:rate-otep-gen-case}\cite[Theorem 3.8, Lemma 3.13]{nutz2022rate}
     Assume that $\mu_i$ are compactly supported and $c\in \mathcal{C}^2(\Omega^2)$ is infinitesimally twisted. Then there is $C > 0$ such that, as $\ep \to 0$
    \begin{equation}
        OT_\ep - OT_0 \leq - \frac{d}{2}\ep\ln(\ep) + C \ep.
    \end{equation}
\end{lemma}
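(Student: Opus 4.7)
The strategy is to upper-bound $OT_\ep$ by evaluating the functional at an explicit competitor $\ga^\delta \in \Pi(\mu_0,\mu_1)$ at scale $\delta$, and then tuning $\delta$ to minimize the resulting bound. Since $OT_\ep \leq (c,\ga^\delta) + \ep H(\ga^\delta\mid \mathcal{H}^{2d})$ for any such $\ga^\delta$, the target is a competitor satisfying $(c,\ga^\delta)-OT_0 \leq O(\delta^2)$ and $H(\ga^\delta\mid\mathcal{H}^{2d}) \leq -d\log\delta + O(1)$; taking $\delta = \sqrt\ep$ then gives $-\tfrac{d}{2}\ep\log\ep + O(\ep)$.

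The natural competitor is a \emph{block approximation} of an optimal plan $\ga_0$: partition the compact set $X$ into cubes $(Q_i)$ of side $\delta$, and define $\ga^\delta$ on each product block $Q_i\times Q_j$ to be proportional to $\mu_0|_{Q_i}\otimes \mu_1|_{Q_j}$ with total mass $\ga_0(Q_i\times Q_j)$. This preserves the marginals by construction, and a direct expansion yields
\[
H(\ga^\delta\mid\mathcal{H}^{2d}) = H(\mu_0\mid\mathcal{H}^d) + H(\mu_1\mid\mathcal{H}^d) + \mathrm{KL}\bigl(\tilde\ga_0^\delta \,\big\|\, \tilde\mu_0^\delta \otimes \tilde\mu_1^\delta\bigr),
\]
where tildes denote discretizations onto the block lattice. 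The KL term, viewed as the mutual information of $\tilde\ga_0^\delta$, is bounded by $\min\bigl(H(\tilde\mu_0^\delta), H(\tilde\mu_1^\delta)\bigr) \leq \log\#\{Q_i : \mu_0(Q_i)>0\} = -d\log\delta + O(1)$ by compactness of $X$. Combined with finiteness of $H(\mu_i\mid\mathcal{H}^d)$ this gives the advertised entropy bound.

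The cost estimate is the delicate step. Taylor expanding $c\in\mathcal{C}^2$ around each block center, the raw block approximation only produces $(c,\ga^\delta)-OT_0 = O(\delta)$, because the linear contributions from block centers do not fully cancel across blocks (the gradient $\nabla c$ varies from block to block, preventing a global telescoping). To improve this to $O(\delta^2)$ one refines the construction within each block so that $\ga^\delta$ preserves not only the marginals but also the conditional first moments of $\ga_0$; the linear Taylor terms then cancel block by block, leaving only an $O(\delta^2)$ quadratic remainder under $\mathcal{C}^2$ regularity of $c$. This refined construction is the technical heart of \cite[Theorem 3.8, Lemma 3.13]{nutz2022rate}.

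The main obstacle is precisely this last refinement: producing a competitor in $\Pi(\mu_0,\mu_1)$ whose cost gap is genuinely quadratic in $\delta$ rather than merely linear. The infinitesimal twistedness of $c$ enters in an essential way through McCann's theorem \cite{mccann2012rectifiability} (already invoked in Lemma \ref{lem:mtw-minty-trick}), which ensures $d$-rectifiability of $\mathrm{supp}(\ga_0)$; without this, one would face $O(\delta^{-2d})$ occupied blocks and an entropy bound of order $-2d\log\delta$, which would spoil the constant in front of the logarithm. Once the quadratic cost bound and the entropy bound are in place, the optimization $\delta = \sqrt\ep$ is elementary.
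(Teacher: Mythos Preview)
Your overall strategy (construct a competitor at scale $\delta$, bound cost and entropy separately, then optimize $\delta$) matches the approach of \cite{nutz2022rate} that the paper simply invokes. The paper's own proof is purely citational: it observes that compact support together with the twist condition give, via \cite[Lemma~3.13]{nutz2022rate}, the correct \emph{quantization} property of $\ga_0$, and then \cite[Theorem~3.8]{nutz2022rate} converts this into the rate. Your sketch attempts to unpack what lies behind those citations, and is largely on target, but contains an internal inconsistency that hides a genuine gap.

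Your entropy bound via mutual information, $I(\tilde X;\tilde Y)\le H(\tilde\mu_0^\delta)\le -d\log\delta+O(1)$, uses only compactness of $X$ and is completely insensitive to the structure of $\mathrm{supp}(\ga_0)$: it needs neither rectifiability nor the twist. Hence your last paragraph, which places the role of twistedness in the block count and hence the entropy, contradicts what you yourself derived two paragraphs earlier. If twistedness truly played no role, your naive $O(\delta)$ cost bound would leave you with $-d\,\ep\log\ep$, off by a factor of two in the leading constant, and you would be stuck. In \cite{nutz2022rate} the twist enters elsewhere. Their competitor is built not from fixed $\delta$-cubes but from the cells of an $N$-point $W_2$-quantization $\hat\ga_N$ of $\ga_0$, and on each cell they take the product of the \emph{restricted} marginals of $\ga_0$ (not of the global $\mu_0,\mu_1$). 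This makes the piecewise marginals of the competitor coincide with those of $\ga_0$, so the linear Taylor terms cancel cell by cell and the cost gap is $O\bigl(\Vert\nabla^2 c\Vert_\infty\, W_2^2(\ga_0,\hat\ga_N)\bigr)$; the entropy is $\log N+O(1)$. Twistedness then enters precisely through \cite[Lemma~3.13]{nutz2022rate} (the Minty-type argument of Lemma~\ref{lem:mtw-minty-trick}): it forces the quantization dimension of $\ga_0$ to equal $d$, i.e.\ $W_2(\ga_0,\hat\ga_N)\lesssim N^{-1/d}$, and optimizing $N\sim\ep^{-d/2}$ gives the correct constant $-\tfrac{d}{2}$. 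Your proposed refinement (matching conditional first moments on fixed cubes) gestures at the right mechanism, but carried out on cubes it would wreck the clean entropy identity you wrote; the quantization-based partition is what makes both bounds hold simultaneously.
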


\begin{proof}
    Let $X$ be compact such that $\text{supp}(\mu_i) \subset X$. Then by continuity, $\nabla^2c$ is bounded. Moreover since $\mu_i$ are compactly supported \cite[Lemma 3.13]{nutz2022rate} ensures that the optimal transport plan $\ga_0$ satisfies the right quantization property for \cite[Theorem 3.8]{nutz2022rate} to apply and grant a constant $C > 0$ such that
    \begin{equation}
        OT_\ep - OT_0 \leq - \frac{d}{2}\ep\ln(\ep) + C \ep.
    \end{equation}
\end{proof}
Note that $OT_\ep - OT_0 = \int E d\ga_\ep + \ep H(\ga_\ep)$. Thus in combination with Lemma \ref{lem:rate-otep-gen-case} and the lower bound on the entropy \textcolor{black}{(Propostition \ref{prop:lower-bound-ent-local-detachment})} we have the following result.
\begin{theorem}\label{thm:general-rates}
    Let $c \in \mathcal{C}^2(\Omega^2)$ be infinitesimally twisted. Let $\mu_i \in \mathcal{P}_{ac}(\Omega)$ be two compactly supported measures satsifying $H(\mu_i) < \infty$. Then 
    \begin{equation}
    (c,\ga_\ep) - (c,\ga_0) = \Th(\ep), \quad H(\ga_\ep) = - \frac{d}{2} \ln(\ep) + O(1) \quad \emph{and} \quad OT_\ep = OT_0 - \frac{d\ep}{2} \ln(\ep) + O(\ep).
    \end{equation}
\end{theorem}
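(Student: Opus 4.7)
The plan is to transpose the strategy of Theorem \ref{thm:rate-cost-quadratic} to the infinitesimally twisted setting, where Minty's global change of variable is no longer available but the local quadratic detachment lemma \ref{lem:quad-detach-energy-gap} plays an equivalent role. The two external inputs are both already in place: on the upper-bound side, Lemma \ref{lem:rate-otep-gen-case} yields
\begin{equation*}
    (E,\ga_\ep) + \ep H(\ga_\ep) \;=\; OT_\ep - OT_0 \;\leq\; -\tfrac{d}{2}\ep\ln(\ep) + C\ep,
\end{equation*}
using the identity $(c,\ga_0) = \int (\ph \oplus \ps)\, d\ga_\ep$ which holds because $\ga_\ep$ has the right marginals; on the lower-bound side, Lemma \ref{lem:quad-detach-energy-gap} furnishes an $\ep$-independent local quadratic detachment structure for $E$.

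The first step is to feed the local detachment of $E$ into Proposition \ref{prop:lower-bound-ent-local-detachment}, applied to $\ga_\ep \in \mathcal{P}(X\times X)$. Since the duality gap $E$ is continuous on the compact $X\times X$, vanishes exactly on the compact set $\Sigma$, and the open sets $(U_i)$ cover $\Sigma$, the complement $R = X^2 \setminus \bigcup_i U_i$ is compact and disjoint from $\Sigma$, so $\inf_R E > 0$. The proposition then gives an $\ep$-independent constant $C''$ such that
\begin{equation*}
    H(\ga_\ep) \;\geq\; -\tfrac{d}{2}\ln\!\left(\int E\, d\ga_\ep\right) + C''.
\end{equation*}
This is the analogue of inequality \eqref{eq:minty-wass-1} in the quadratic case.

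The second step is to combine the two inequalities exactly as in the proof of Theorem \ref{thm:rate-cost-quadratic}. Substituting the entropy lower bound into the $OT_\ep$ upper bound and dividing by $\ep$, the quantity $r_\ep := (E,\ga_\ep)/\ep$ satisfies
\begin{equation*}
    r_\ep - \tfrac{d}{2}\ln(r_\ep) \;\leq\; \widetilde{C}
\end{equation*}
for a constant $\widetilde{C}$ independent of $\ep$. Since $x \mapsto x - \tfrac{d}{2}\ln(x)$ is coercive at both $0$ and $+\infty$, this forces $r_\ep \in [c_1,c_2]$ for positive constants. Hence $(E,\ga_\ep) = (c,\ga_\ep) - (c,\ga_0) = \Theta(\ep)$, and reinjecting this into the two inequalities yields $H(\ga_\ep) = -\tfrac{d}{2}\ln(\ep) + O(1)$ and $OT_\ep - OT_0 = -\tfrac{d\ep}{2}\ln(\ep) + O(\ep)$.

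The main conceptual difficulty, which is already resolved by the groundwork laid in Lemmas \ref{lem:mtw-minty-trick} and \ref{lem:quad-detach-energy-gap} together with Proposition \ref{prop:lower-bound-ent-local-detachment}, was to build an entropy lower bound that does not depend on the global convex structure exploited in the quadratic case. Once those are in hand, the only delicate point to verify is that every constant entering the lower bound — the number $N$ of charts, the operator norms $\|\Ph_i\|_{op}$, the detachment constant $\ka$, and especially $\inf_R E$ — depends only on the unregularized data $(c,\mu_0,\mu_1)$ and not on $\ep$; this is automatic since $E$ and its detachment chart system are produced by the Kantorovich potentials of the $\ep=0$ problem.
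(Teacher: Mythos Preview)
Your proof is correct and follows essentially the same route as the paper: apply Lemma \ref{lem:quad-detach-energy-gap} to get a local quadratic detachment for $E$, feed this into Proposition \ref{prop:lower-bound-ent-local-detachment} to obtain $H(\ga_\ep) \geq -\tfrac{d}{2}\ln(E,\ga_\ep) + C$, combine with the upper bound of Lemma \ref{lem:rate-otep-gen-case}, and conclude via the coercivity of $x \mapsto x - \tfrac{d}{2}\ln x$ exactly as in Theorem \ref{thm:rate-cost-quadratic}. Your write-up is in fact slightly more detailed than the paper's on two points it leaves implicit: the justification that $\inf_R E > 0$ (compactness of $R$ and continuity of $E$), and the explicit check that every constant in the entropy lower bound is $\ep$-independent.
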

\begin{proof}
    We denote by $X$ a compact subset of $\Omega$ such that $\mu_i(X) = 1$. It is known that the Kantorvich potentials are $c$-conjugate functions and thus are continuous on $\Omega$. In particular the duality gap function $E(x,y) = c(x,y) - \ph(x) - \ps(y)$ is continuous on $\Omega\times \Omega$. By Lemma \ref{lem:quad-detach-energy-gap} we thus know that $E$ has a local quadratic detachment. Thus Proposition \ref{prop:lower-bound-ent-local-detachment} implies that there is a constant $C$ such that
    \begin{equation}\label{eq:lower-bound-entropy-gen-proof}
        H(\ga_\ep) \geq -\frac{d}{2}\ln\left(\int E d \ga_\ep\right) + C.
    \end{equation}
    Now combining this \textcolor{black}{inequality} with the upper bound on the rate of convergence \eqref{eq:gen-rate-eot} we have
    \begin{equation}\label{eq:core-inequation-gen-proof}
        - \frac{d}{2}\ep\ln(\ep) + C \ep \geq  \int E d\ga_\ep + \ep H(\ga_\ep) \geq \int E d\ga_\ep - \frac{d}{2} \ep\ln\left(\int E d\ga_\ep\right)  + C\ep.
    \end{equation}
    Dividing both sides by $\ep$ and adding $\ln(\ep)$ we get
    \begin{equation}
        \frac{\int E d\ga_\ep}{\ep} - \frac{d}{2}\ln\left(\frac{\int E d\ga_\ep}{\ep}\right) \leq C.
    \end{equation}
    And we conclude as in Theorem \ref{thm:rate-cost-quadratic}.
\end{proof}

\subsection{Lower bound on $W_2(\ga_\ep,\ga_0)$}

We now focus on the lower bound of the Wasserstein distance between $\ga_\ep$ and $\ga_0$. Unlike the quadratic case we \textcolor{black}{will not} be able to directly use a quadratic detachment for the Wasserstein distance. For $\ep > 0$, by construction $\ga_\ep$ is absolutely continuous with respect to the Lebesgue measure, thus there is an optimal transport map for the quadratic cost from $\ga_\ep$ to $\ga_0$. In particular this map is the gradient of a convex function $f$. We are now able to write the Wasserstein distance as $W^2_2(\ga_\ep,\ga_0) = \int \Vert \bm{x}- \nabla f(\bm{x})\Vert^2 d\ga_\ep(\bm{x})$. Inspired by the quadratic case and the last section we could say that $\bm{x}\mapsto \Vert \bm{x}- \nabla f(\bm{x})\Vert^2$ has a local quadratic detachment, but this function is not necessarily continuous which prevents us from applying the results on quadratic detachment. However the spirit of the proofs remains true and the results of this section are essentially an adaptation of the results presented before. First the Wasserstein distance between two measures satisfies a property close to a quadratic detachment whenever one of the measure is supported on a Lipschitz graph.

\begin{lemma}\label{lem:wasserstein-orthogonality}
Let $E$ be a subspace of $\mathbb{R}^n$, $T \ : E \rightarrow E^\perp$ and
 $\mu =(Id\times T)_\# \mu_0$ with $\mu_0$ a probability on $E$ with finite moment of order $2$. Suppose that T is L-Lipschitz. Then for any $\nu \in \mathcal{P}_{2,ac}(\mathbb{R}^d)$ 
\begin{equation}
W_2^2(\mu,\nu) \geq (1-L)W_2^2(\mu_0,\nu_0) + \frac{1}{L+1}\int \textcolor{black}{\emph{Var}}(\nu_x) d\nu_0(x)
\end{equation}
where $\nu= \nu_0 \otimes \nu_x$ is the disintegration of $\nu$ with regard to the orthogonal projection on $E$.
In particular 
\begin{equation}
(1+L)^2 W_2^2(\mu,\nu) \geq \int  \textcolor{black}{\emph{Var}}(\nu_x)d\nu_0(x).
\end{equation}
\end{lemma}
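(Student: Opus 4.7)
The strategy is to reduce the lemma to a pointwise comparison between the quadratic cost on $\mathbb{R}^n$ and the costs on the two orthogonal components $E$ and $E^\perp$, then integrate against an optimal transport plan and disintegrate $\nu$.

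First, I would establish the pointwise estimate: for every $x,y \in E$ and $z \in E^\perp$,
\[
\|x-y\|^2 + \|T(x)-z\|^2 \;\geq\; (1-L)\|x-y\|^2 + \frac{1}{1+L}\|T(y)-z\|^2.
\]
Writing $T(y)-z = (T(y)-T(x)) + (T(x)-z)$ and applying the weighted Cauchy--Schwarz inequality $(a+b)^2 \leq \frac{a^2}{\lambda} + \frac{b^2}{1-\lambda}$ with the sharp choice $\lambda = \frac{L}{1+L}$, the Lipschitz bound $\|T(y)-T(x)\| \leq L\|y-x\|$ yields
\[
\|T(y)-z\|^2 \;\leq\; L(1+L)\|x-y\|^2 + (1+L)\|T(x)-z\|^2.
\]
Dividing by $1+L$ and rearranging gives the claim. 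The only genuinely non-routine step is this weight choice $\lambda = L/(1+L)$.

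Next, I would take an optimal quadratic transport plan $\pi$ between $\mu$ and $\nu$, parameterise points of $\mathbb{R}^n$ as elements of $E \oplus E^\perp$, and use that points in the support of $\mu$ have the form $(x,T(x))$. Integrating the pointwise estimate against $\pi$,
\[
W_2^2(\mu,\nu) \;\geq\; (1-L)\!\int\!\|x-y\|^2\,d\pi + \frac{1}{1+L}\!\int\!\|T(y)-z\|^2\,d\pi.
\]
The push-forward of $\pi$ by $((x,T(x)),(y,z)) \mapsto (x,y)$ is a coupling of $\mu_0$ and $\nu_0$, whence $\int\|x-y\|^2\,d\pi \geq W_2^2(\mu_0,\nu_0)$; and since $\pi$ has second marginal $\nu$, disintegrating $\nu = \nu_0 \otimes \nu_y$ along the orthogonal projection onto $E$ and using that the mean of $\nu_y$ minimises the average squared distance yields $\int\|T(y)-z\|^2\,d\pi \geq \int \mathrm{Var}(\nu_y)\,d\nu_0(y)$. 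Combining these inequalities produces the first assertion of the lemma; the sign constraint $(1-L) \geq 0$ under which the marginal bound can be used in this direction corresponds to the regime $L \leq 1$, which is the one relevant in view of Remark \ref{rem:mtw-lipschitz-graph}.

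For the ``in particular'' statement the cleanest route is a direct application of the same weighted Cauchy--Schwarz: the chain
\[
\|T(y)-z\|^2 \;\leq\; (1+L)\bigl(L\|x-y\|^2 + \|T(x)-z\|^2\bigr) \;\leq\; (1+L)^2\bigl(\|x-y\|^2 + \|T(x)-z\|^2\bigr),
\]
integrated against any coupling, yields $(1+L)^2 W_2^2(\mu,\nu) \geq \int \mathrm{Var}(\nu_y)\,d\nu_0(y)$, with no restriction on $L$. The main obstacle of the whole lemma is concentrated in the sharp weighting $\lambda = L/(1+L)$; everything else is a standard disintegration and marginal push-forward argument.
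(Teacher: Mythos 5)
Your proof is correct, and its core ingredients coincide with the paper's: an optimal coupling $\pi$ between $\mu$ and $\nu$, the Lipschitz bound $\Vert T(y)-z\Vert\le L\Vert x-y\Vert+\Vert T(x)-z\Vert$, and a weighted quadratic inequality with weight $L/(1+L)$. The organisation differs in a way worth recording. The paper squares the Lipschitz inequality and handles the cross term $2L\,\mathbb{E}_\pi\left[\Vert X-X'\Vert\,\Vert Y-T(X')\Vert\right]$ at the level of expectations, bounding it by $L\,W_2^2(\mu,\nu)$ via Young's inequality; you instead absorb the cross term pointwise with the sharp weight $\lambda=L/(1+L)$, obtaining the same constants as a pointwise inequality which you then integrate against $\pi$ — slightly cleaner, and it makes transparent where the factors $(1-L)$ and $\tfrac{1}{1+L}$ come from. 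More substantively, your route to the ``in particular'' estimate is genuinely different: you prove $\Vert T(y)-z\Vert^2\le(1+L)^2\left(\Vert x-y\Vert^2+\Vert T(x)-z\Vert^2\right)$ directly and integrate, which yields $(1+L)^2W_2^2(\mu,\nu)\ge\int \mathrm{Var}(\nu_x)\,d\nu_0$ with no restriction on $L$, whereas the paper deduces it from the first display. You are also right to flag that replacing $(1-L)\int\Vert x-y\Vert^2\,d\pi$ by $(1-L)W_2^2(\mu_0,\nu_0)$ only goes in the stated direction when $L\le1$; the paper's proof carries the same implicit restriction, and since the lemma is invoked in Proposition \ref{prop:gen-lower-bound-wass-gap} with $L=\sqrt{3}>1$ and only through the ``in particular'' inequality, your direct argument for that inequality is the one that actually covers the application.
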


\begin{proof}
\textcolor{black}{Let $X,X'$ be random variables valued in $E$ and $Y,Y'$ be random variables valued in $E^\perp$.} Let $\pi$ be the optimal coupling between $\mu,\nu$.
Let $(X',Y') \sim \mu$ and $(X,Y) \sim \nu$ be such that $(X',Y',X,Y) \sim \pi$. Thus
\begin{equation}
W^2_2(\mu,\nu) = \mathbb{E}_\pi\left[\Vert X-X'\Vert^2 + \Vert Y - T(X')\Vert^2\right].
\end{equation}
Now using that $T$ is $L$-Lipschitz:
\begin{equation}
    L\Vert X-X'\Vert + \Vert Y-T(X')\Vert \geq \Vert Y - T(X) \Vert.
\end{equation}
and taking the square
\begin{equation}
    L^2\Vert X-X'\Vert^2 + \Vert Y-T(X')\Vert^2 + 2L\Vert X-X'\Vert\Vert Y-T(X')\Vert\geq \Vert Y - T(X) \Vert^2.
\end{equation}
Thus we have
\begin{equation}\label{eq:wass-mino-proof-lemma}
\begin{split}
    W^2_2(\nu,\mu) 
    &\geq \mathbb{E}_\pi\left[\Vert X-X'\Vert^2 + \Vert Y - T(X')\Vert^2\right]\\
    &\geq (1-L^2) \mathbb{E}_\pi\left[\Vert X-X'\Vert^2 \right] + \mathbb{E}_\pi\left[\Vert Y-T(X)\Vert^{2}\right] \\
    &- 2L\mathbb{E}_\pi\left[\Vert X-X'\Vert\Vert Y-T(X')\Vert\right]
\end{split}
\end{equation}
but the last term satisfies by Young inequality $ab \leq \frac{1}{2} (a^2 + b^2) $
\begin{equation}
    \mathbb{E}_\pi\left[\Vert X-X'\Vert\Vert Y-T(X')\Vert\right] \leq \frac{1}{2}(\mathbb{E}_\pi\left[\Vert X-X'\Vert^2\right] + \mathbb{E}_\pi\left[\Vert Y - T(X')\Vert^2\right]) \leq \frac{W_2^2(\mu,\nu)}{2}.
\end{equation}
Inequality \eqref{eq:wass-mino-proof-lemma} becomes
\begin{equation}
    W^2_2(\mu,\nu)(1+L) \geq  (1-L^2) \mathbb{E}_\pi\left[\Vert X-X'\Vert^2 \right] + \mathbb{E}_\pi\left[\Vert Y-T(X)\Vert^{2}\right] 
\end{equation}
and finally we have
\begin{equation}
\begin{split}
W^2_2(\mu,\nu) &\geq  (1-L) \mathbb{E}_\pi\left[\Vert X-X'\Vert^2 \right] +\frac{1}{1+L} \mathbb{E}_\pi\left[\Vert Y-T(X)\Vert^{2}\right] \\
 &\geq  (1 - L) W^2_2(\mu_0,\nu_0)+\frac{1}{1+L} \int\mathbb{E}_{\nu_x}\left[\Vert Y - \mathbb{E}\left[Y\mid X= x \right]\Vert^2\right] \textcolor{black}{d\nu_0(x)}\\
 & \geq (1 - L) W^2_2(\mu_0,\nu_0)+\frac{1}{1+L} \int \textcolor{black}{\text{Var}}(\nu_x)\textcolor{black}{d\nu_0(x)}
\end{split}
\end{equation}
\textcolor{black}{where} the second inequality is true since the mean is the \textcolor{black}{$L^2$-orthogonal} projection of the random variable on the space of constants. Now since $W^2_2(\mu,\nu) \geq W^2_2(\mu_0,\nu_0)$ we have
\begin{equation}
    (1+L) W^2_2(\mu,\nu)\geq W^2_2(\mu,\nu) + LW^2_2(\mu_0,\nu_0) \geq W^2_2(\mu_0,\nu_0)+\frac{1}{1+L} \int \textcolor{black}{\text{Var}}(\nu_x)\textcolor{black}{d\nu_0(x)}
\end{equation}
\textcolor{black}{which} finally grants $(1+L)^2 W^2_2(\mu,\nu) \geq \int \textcolor{black}{\text{Var}}(\nu_x)\textcolor{black}{d\nu_0(x)}$.
\end{proof}
The proof can be slightly modified to show that the function $\bm{x} \mapsto d(\bm{x},\Gamma)^2$ where $\Gamma$ is the graph of a Lipschitz function has a quadratic detachment. As stated before, to use the local quadratic detachment apparatus one could show that $\bm{x}\mapsto \Vert \bm{x}- \nabla f(\bm{x})\Vert^2$ has a local quadratic detachment where $\nabla f$ is the optimal transport from $\ga_\ep$ to $\ga_0$. And thanks to remark \ref{rem:mtw-lipschitz-graph}, $\ga_0$ is supported on a Lipschitz graph on charts. In particular if on an open set $U$, $\ga_0$ is the graph of a Lipschitz function, then $\bm{x}\mapsto \Vert \bm{x}- \nabla f(\bm{x})\Vert^2$ satisfies a quadratic detachment on $(\nabla f)^{-1}(U)$. However since we have no information on the regularity of the transport map we cannot conclude that $(\nabla f)^{-1}(U)$ is open and observe a local quadratic detachment. Thus we have to adapt the proof of Proposition \ref{prop:lower-bound-ent-local-detachment} in order to manage this issue and derive a lower bound for the entropy of $\ga_\ep$ using $W_2^2(\ga_\ep,\ga_0)$. 
\begin{proposition}\label{prop:gen-lower-bound-wass-gap}
    Let $\mu_i \in \mathcal{P}(\Omega)$ be two compactly supported measures of finite entropy \textcolor{black}{and $c \in \mathcal{C}^2(\Omega^2)$ be infinitesimally twisted.} Then there exists $C > 0$ such that as $\ep \to 0$
    \begin{equation}
        H(\ga_\ep) \geq -\frac{d}{2}\ln\left(W^2_2(\ga_\ep,\ga_0)\right) + C.
    \end{equation}
\end{proposition}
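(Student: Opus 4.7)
The overall strategy is to adapt the proof of Proposition \ref{prop:lower-bound-ent-local-detachment}, substituting Lemma \ref{lem:wasserstein-orthogonality} for the quadratic detachment argument. For $r$ small enough that $\tau(r)\le 1/2$, Remark \ref{rem:mtw-lipschitz-graph} lets me cover $\mathrm{supp}(\ga_0)$ by finitely many balls $B(\bm{x}_i,r)$ so that, in the affine volume preserving coordinates $\Ph_i:(x,y)\mapsto(u,v)$ of Lemma \ref{lem:quad-detach-energy-gap}, $\mathrm{supp}(\ga_0)\cap B(\bm{x}_i,r)$ lies in the graph of a Lipschitz function $T_i$ of constant $L<\sqrt{3}$. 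I would extend each $T_i$ Lipschitz-ly to $\tilde T_i:\mathbb{R}^d\to\mathbb{R}^d$ via Kirszbraun and fix a partition of unity $(\zeta_i)$ subordinate to this cover with each $\mathrm{supp}(\zeta_i)$ at positive distance $\delta>0$ from $\partial B(\bm{x}_i,r)$.

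Following the template of Proposition \ref{prop:lower-bound-ent-local-detachment}, setting $\rho_i=\zeta_i\rho_\ep/p_i$ and $\tau_i=(\Ph_i)_\#\rho_i=\mu_i\otimes\tau_i^u$, the same computations as in that proof will yield
\[
H(\ga_\ep)\ge \sum_i p_i H(\rho_i) - C_1 = \sum_i p_i\Bigl(H(\mu_i)+{\textstyle\int} H(\tau_i^u)\,d\mu_i\Bigr)-C_1.
\]
Using that the Gaussian minimizes entropy at fixed variance, combined with Jensen (first to exchange $\ln$ with $\int\,d\mu_i$, then with $\sum_i p_i$), the proposition will follow once I show
\[
\sum_i p_i\int\mathrm{Var}(\tau_i^u)\,d\mu_i \le C\,W_2^2(\ga_\ep,\ga_0).
\]
Since the variance is the minimum mean square deviation from any constant, and $\tilde T_i(u)$ is one such constant for each fixed $u$, this reduces further to $\sum_i p_i\int\|v-\tilde T_i(u)\|^2\,d\tau_i\le C\,W_2^2(\ga_\ep,\ga_0)$.

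For this final bound I would use the Brenier map $T:\mathbb{R}^{2d}\to\mathbb{R}^{2d}$ from $\ga_\ep$ to $\ga_0$, which exists since $\ga_\ep$ is absolutely continuous; it satisfies $W_2^2(\ga_\ep,\ga_0)=\int\|\bm{x}-T(\bm{x})\|^2\,d\ga_\ep$ and $T(\bm{x})\in\mathrm{supp}(\ga_0)$ for $\ga_\ep$-a.e.\ $\bm{x}$. The crux is the pointwise estimate
\[
\|v(\bm{x})-\tilde T_i(u(\bm{x}))\|\le C_i\,\|\bm{x}-T(\bm{x})\| \qquad \text{for } \bm{x}\in\mathrm{supp}(\zeta_i),
\]
obtained by a case split on whether $T(\bm{x})\in B(\bm{x}_i,r)$: in the first case $v(T(\bm{x}))=T_i(u(T(\bm{x})))=\tilde T_i(u(T(\bm{x})))$ and the estimate follows from the Lipschitz properties of $\tilde T_i$ and $\Ph_i$; in the second case the separation $\|\bm{x}-T(\bm{x})\|\ge\delta$ makes the inequality trivial by compactness of the supports. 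Integrating, summing with weights $p_i$ and using $\sum_i p_i\rho_i\le\rho_\ep$ then gives the required control. The chief technical obstacle will be the case analysis in this pointwise estimate and the careful bookkeeping of the additive constants through the entropy decomposition, in particular the $\rho_\ep|_{U^c}$ leftover term, which can be absorbed exactly as in the proof of Proposition \ref{prop:lower-bound-ent-local-detachment}.
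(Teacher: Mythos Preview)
Your argument is correct, but the decomposition you choose differs from the paper's in a way worth noting. You partition $\ga_\ep$ on the \emph{source} side via $\rho_i=\zeta_i\rho_\ep/p_i$, which forces you to extend each local graph via Kirszbraun, run a case split on whether $T(\bm{x})\in B(\bm{x}_i,r)$, and absorb a residual $U^c$ piece. The paper instead pulls the partition back through the Brenier map, setting $\ga_\ep^i=\frac{1}{p_i}(\zeta_i\circ T)\,\ga_\ep$ and $\ga_0^i=\frac{1}{p_i}\zeta_i\,\ga_0$. Then $T$ transports $\ga_\ep^i$ onto $\ga_0^i$, so $W_2^2(\ga_\ep,\ga_0)\ge\sum_i p_i\,W_2^2(\ga_\ep^i,\ga_0^i)$; each $(\Ph_i)_\#\ga_0^i$ is already supported on a Lipschitz graph, so Lemma~\ref{lem:wasserstein-orthogonality} applies directly with no extension or case analysis; and since $T(\bm{x})\in\mathrm{supp}(\ga_0)\subset U$ almost everywhere, the weights satisfy $\sum_i p_i=1$ and there is no $U^c$ leftover at all. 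Your route trades the black-box use of Lemma~\ref{lem:wasserstein-orthogonality} for an explicit pointwise estimate (which essentially reproves that lemma in situ), at the price of more bookkeeping. One small point to tighten: the $U^c$ absorption is not literally ``exactly as in'' Proposition~\ref{prop:lower-bound-ent-local-detachment}; you need the observation that $d(U^c,\mathrm{supp}(\ga_0))>0$ by compactness, so that $\|\bm{x}-T(\bm{x})\|$ is uniformly bounded below on $U^c$, furnishing the analogue of $E_0$.
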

\begin{proof}
    Throughout we use the notations of Lemma \ref{lem:mtw-minty-trick}. Since $\ga_\ep$ has a density, there is a transport map $T$ for the optimal transport problem with quadratic cost from $\ga_\ep$ to $\ga_0$. Let $r > 0$ be such that $\tau(r) \leq \frac{1}{2}$. Let $(U_i = B(\bm{x_i},r)_i$ be an open covering of the support of $\ga_0$ with $\bm{x_i}\in \text{supp}\ga_0$. Let $(\zeta_i)_i$ be a partition of unity subordinate to the covering $(U_i)_i$. For $i$ set $\ga^i_\ep = \frac{1}{p_i}\zeta_i\circ T \ga_\ep$ and $\ga^i_0 =\frac{1}{p_i} \zeta_i \ga_0$, where $p_i$ is the normalization constant which is independent of $\ep$. Note that $T$ transports $\ga^i_\ep$ to $\ga^i_0$. Thus we have $W^2_2(\ga_\ep, \ga_0) \geq \sum_i p_iW_2^2(\ga^i_\ep,\ga^i_0)$. 
     We now introduce $\Ph_i$:
     $\Ph_i(x,y) = \alpha_i(u_i(x,y)-u_i(\bm{x_i}),v_i(x,y)-v_i(\bm{x_i}))$ where $\alpha_i$ is such that $\Ph_i$ is volume preserving.
      Let $\tau^i_\ep,\tau^i_0$ be the pushforwards of $\ga^i_\ep$ and $\ga^i_0$ with respect to this map. We disintegrate $\tau^i_\ep$ with respect to the projection on the variable $u$ and denote it $\mu^i_\ep \otimes \tau_\ep^{i,u}$. Note that $\Ph_i$ is not an isometry, however it is an affine map thus $W^2_2(\ga^i_\ep,\ga^i_0) \geq C_i W^2_2(\tau^i_\ep,\tau^i_0)$, where $C_i = 1/ \Vert \Ph_i \Vert_{op}$. Note that $C_i$ only depends on $\nabla^2_{xy}c(\bm{x}_i)$, thus it is independent of $\ep$. It was pointed out in remark \ref{rem:mtw-lipschitz-graph} that for every $i$, $\tau^i_0$ is supported on  the graph  of a $\sqrt{3}$-Lipschitz function. Thus thanks to Lemma \ref{lem:wasserstein-orthogonality}
    \begin{equation}
    \begin{split}
        H(\ga^i_\ep) = H(\tau^i_\ep) &= H(\mu^i_\ep) + \int H(\tau^{i,u}_\ep) d\mu^i_\ep \\
        &\geq H(\mu^i_\ep) - \frac{d}{2}\ln\left(\int  \textcolor{black}{\text{Var}}(\tau^{i,u}_\ep)d\mu^i_\ep \right) + C\\
        &\geq H(\mu^i_\ep) - \frac{d}{2}\ln\left(W^2_2(\ga^i_\ep,\ga^i_0) \right) + \frac{d}{2}\ln\left(\frac{C_i}{\textcolor{black}{(\sqrt{3}+1)^2}}\right)+C\\
        &\geq - \frac{d}{2}\ln\left(W^2_2(\ga^i_\ep,\ga^i_0) \right) + C(X)
    \end{split}
    \end{equation}
    \textcolor{black}{where} \textcolor{black}{the first inequality comes from inequality \eqref{eq:lower-bound-entropy-tau} and} the last inequality holds because $\mu^i_\ep$ is supported on a compact set of diameter comparable to that of $X$, as seen in the proof of \textcolor{black}{Proposition} \ref{prop:lower-bound-ent-local-detachment}.
    We proceed as in the proof of Proposition \ref{prop:lower-bound-ent-local-detachment} . Thus by summing over $i$ we get
    \begin{equation}
    \begin{split}
        H(\ga_\ep) &\textcolor{black}{\geq} \sum_i p_i \ln(p_i) + \sum_i p_i H(\ga^i_\ep)\\
        &\geq - \frac{d}{2}\sum_i p_i\ln\left(W^2_2(\ga^i_\ep,\ga^i_0) \right) + C\\
        &\geq - \frac{d}{2}\ln\left(W^2_2(\ga_\ep,\ga_0)\right) + C
    \end{split}
    \end{equation}
    \textcolor{black}{where} the last inequality holds by concavity of the logarithm.
\end{proof}
It remains to combine the last result with the rate of convergence of \eqref{eq:EOT} in order to retrieve the result on the Wasserstein distance between $\ga_\ep$ and $\ga_0$.

\begin{theorem}\label{thm:gen-mino-wass-any-coupling}
    Let $c \in \mathcal{C}(\Omega^2)$ be infinitesimally twisted. Let $\mu_i \in \mathcal{P}(\Omega)$ be two compactly supported measures of finite entropy. Then there exists $c > 0$ such that as $\ep \to 0$
    \begin{equation}
        W^2_2(\ga_\ep,\ga_0) \geq c \ep.
    \end{equation}
\end{theorem}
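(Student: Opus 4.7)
The plan is to combine the two key estimates already available in this section: the lower bound on the entropy of $\ga_\ep$ in terms of $W_2^2(\ga_\ep,\ga_0)$ from Proposition \ref{prop:gen-lower-bound-wass-gap}, and the sharp upper bound on $H(\ga_\ep)$ coming from Theorem \ref{thm:general-rates}.

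First, I would invoke Proposition \ref{prop:gen-lower-bound-wass-gap}, which, under the assumptions of the theorem, produces a constant $C_1$ such that for $\ep$ small enough
\begin{equation}
H(\ga_\ep) \geq -\frac{d}{2}\ln\left(W_2^2(\ga_\ep,\ga_0)\right) + C_1.
\end{equation}
Next, I would recall the upper bound on the entropy provided by Theorem \ref{thm:general-rates}: there exists $C_2$ such that as $\ep \to 0$,
\begin{equation}
H(\ga_\ep) \leq -\frac{d}{2}\ln(\ep) + C_2.
\end{equation}

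Chaining these two inequalities gives
\begin{equation}
-\frac{d}{2}\ln\left(W_2^2(\ga_\ep,\ga_0)\right) + C_1 \leq -\frac{d}{2}\ln(\ep) + C_2,
\end{equation}
which rearranges into $\ln\left(W_2^2(\ga_\ep,\ga_0)/\ep\right) \geq \frac{2(C_1-C_2)}{d}$, and exponentiating yields the desired $W_2^2(\ga_\ep,\ga_0) \geq c\,\ep$ with $c := \exp(2(C_1-C_2)/d)>0$. There is no real obstacle in this argument: all the analytic work has been done in Proposition \ref{prop:gen-lower-bound-wass-gap} (the local Lipschitz-graph structure of $\text{supp}(\ga_0)$ via Minty's trick and the variance estimate from Lemma \ref{lem:wasserstein-orthogonality}) and in Theorem \ref{thm:general-rates} (the matching $-\tfrac{d}{2}\ln(\ep)+O(1)$ control of $H(\ga_\ep)$ coming from lemma \ref{lem:rate-otep-gen-case} and the quadratic detachment). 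The final statement is thus just the algebraic combination of the two bounds.
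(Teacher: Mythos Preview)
Your argument is correct and follows essentially the same route as the paper's proof: combine the entropy lower bound from Proposition \ref{prop:gen-lower-bound-wass-gap} with an upper bound of the form $H(\ga_\ep)\le -\tfrac{d}{2}\ln(\ep)+C$ and exponentiate. The only cosmetic difference is that the paper obtains this upper bound directly from Lemma \ref{lem:rate-otep-gen-case} (using $\int E\,d\ga_\ep\ge 0$) rather than invoking the full Theorem \ref{thm:general-rates}.
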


\begin{proof}

    Lemma \ref{lem:rate-otep-gen-case} ensures that
    \begin{equation}
        \ep H(\ga_\ep) \leq \int E d\ga_\ep + \ep H(\ga_\ep) \leq -\frac{d}{2}\ep \ln(\ep) + C\ep
    \end{equation}
    \textcolor{black}{and} Proposition \ref{prop:gen-lower-bound-wass-gap} grants the following lower bound for the entropy
    \begin{equation}
        H(\ga_\ep) \geq -\frac{d}{2} \ln(W^2_2(\ga_\ep,\ga_0)) + C.
    \end{equation}
    Combining the two \textcolor{black}{inequalities} together we have
    \begin{equation}
        -\frac{d}{2} \ln(W^2_2(\ga_\ep,\ga_0)) \leq -\frac{d}{2} \ln(\ep) + C.
    \end{equation}
    Taking the exponential grants the result.
\end{proof}
 Remark that in the general case, we obtain only the domination $\ep = O(W_2^2(\ga_\ep,\ga_0))$ whereas in the quadratic case, even if it is under strong assumptions, we could obtain $W_2^2(\ga_\ep,\ga_0) = \Th(\ep)$ (Theorem~\ref{thm:mino-wass-gen}). The reason is not that the solution are not smooth enough. Indeed, there exist some precise assumptions, such as the Ma-Trudinger-Wang conditions (see \cite{ma2005regularity}), that guarantee regularity of the solutions of the dual problem. The true difficulty is to replace the formula $W_2^2(\ga_\ep,\ga_0) \leq L (E,\ga_\ep)$ of Lemma \ref{lem:minty-wasserstein-upper-bound} that \textcolor{black}{transforms} this regularity (the \textcolor{black}{Lipschitz} constant L of the transport map) into \textcolor{black}{a bound on} $W_2^2(\ga_\ep,\ga_0)$. 
\printbibliography
\end{document}